\newtheorem{theorem}{Theorem}[section]
\newtheorem{lemma}[theorem]{Lemma}
\newtheorem{cor}[theorem]{Corollary}
\newtheorem{prop}[theorem]{Proposition}
\theoremstyle{definition}
\theoremstyle{remark}
\newtheorem{remark}[theorem]{Remark}
\def\E{\mathbb{E}}
\def\N{\mathbb{N}}
\def\Q{\mathbb{Q}}
\def\R{\mathbb{R}}
\def\T{\mathbb{T}}
\def\cA{\mathcal{A}}
\def\cC{\mathcal{C}}
\def\cF{\mathcal{F}}
\def\cH{\mathcal{H}}
\def\cK{\mathcal{K}}
\newcommand{\nn}{\nonumber}
\begin{document}
\author{Julia H\"orrmann, Daniel Hug, Michael Klatt and Klaus Mecke}
\date{\today}

\title{Minkowski tensor density formulas\\ for Boolean models}

\maketitle

\begin{abstract}
A stationary Boolean model is the union set of random compact particles which are attached to the points of a stationary Poisson point process. For a stationary Boolean model with convex grains we consider a recently developed collection of shape descriptors, the so called Minkowski tensors.
By combining spatial and probabilistic averaging we define Minkowski tensor densities of a Boolean model. These densities are global characteristics of the union set which can be estimated from observations. In contrast local characteristics like the mean Minkowski tensor of a single random particle cannot be observed directly, since the particles overlap.
We relate the global to the local properties by density formulas for the Minkowski tensors. These density formulas generalize the well known formulas for intrinsic volume densities and are obtained by applying results from translative integral geometry.
For an isotropic Boolean model we observe that the Minkowski tensor densities are proportional to the intrinsic volume densities, whereas for a non-isotropic Boolean model this is usually not the case. Our results support the idea that the degree of anisotropy of a Boolean model may be expressed in terms of the Minkowski tensor densities. Furthermore we observe that for smooth grains the mean curvature radius function of a particle can be reconstructed from the Minkowski tensor densities. In a simulation study we determine numerically Minkowski tensor densities
 for non-isotropic  Boolean models based on ellipses and on rectangles in
 two dimensions and find excellent agreement with the derived analytic
 density formulas. The tensor densities can be used to characterize the
 orientational distribution of the grains and to estimate model parameters
 for non-isotropic distributions.
 In particular, the numerically determined values for the density of the Euler
 characteristic allow the  estimation of certain mixed functionals of the
 grains.

 \noindent
\textit{Key words: Stochastic geometry, Boolean model, stationarity, anisotropy, Min\-kow\-ski tensors, Poisson process, translative integral geometry}

\vspace*{0.1cm}
\noindent\textit{2010 Mathematics Subject Classification:} Primary: 60D05; Secondary: 60G55, 62M30, 82B44
\end{abstract}
\renewcommand{\thefootnote}{{}}

\footnote{
The authors acknowledge support by the German research foundation (DFG) through
the research group `Geometry and Physics of Spatial Random Systems' under grants
ME1361/12-1, HU1874/2-1 and HU1847/3-1.}

\renewcommand{\thefootnote}{\fnsymbol{footnote}}

\section{Introduction}
The Boolean model appeared early in applied probability,
 typically in  attempts to describe random geometrical
 structures of physics and materials science by overlapping spherical grains.
 It was Matheron who created the general theory of the Boolean
 model and studied its basic properties \cite{Matheron.1975}.
 Let $\{\xi_i:i\in \mathbb{N}\}$ be a stationary Poisson point process in
 $\mathbb{R}^n$ with intensity $\gamma>0$, and let $K_1,K_2,\ldots$ be independent,
 identically distributed random compact sets with distribution
 $\mathbb{Q}$, which are independent of the point process $\{\xi_i: i\in \N\}$.
 Then, under the integrability assumption \eqref{IntCondBM} on
 $\mathbb{Q}$, the union of the translated grains
 \[
 Z:= \bigcup\limits_{i=1}^\infty (K_i+\xi_i)
 \]
 is a random closed set, which is called the stationary Boolean model with
 intensity $\gamma$ and grain distribution $\mathbb{Q}$; see \cite{Stoyan.2005}
 for a review  of recent developments in this context.

The Boolean model is a popular model in materials science and the physics
 of heterogeneous media \cite{Torquato.2002} relating shape to
 physical properties.   Many  porous materials  are built up
 by the successive addition of inclusions (grains, pores or cracks) within
 a background phase \cite{Garboczi.1995,Arns.2003}; such materials can be
 modeled by a Boolean process.
 It has been applied, in particular, on  foamed materials, ceramic powders
 \cite{Roberts.2000}, wood composites \cite{Wang.1998}, sedimentary rock
 \cite{Schwartz.1993,Martys.1994}, fractured
 materials or hydrating cement-based materials \cite{Garboczi.2011}.
 Depending on the specific application, either the pore space or the solid
 phase of a material may be described as a Boolean model $Z$.
 For example, the pore space of bread \cite{Bindrich.1991} was modeled by
 the Boolean model, whereas for sintered ceramic composites it is the
 solid phase which is described as a Boolean model; see
 \cite{Roberts.2000}.
 In particular, for the reconstruction of two-phase materials
 the Boolean model is successful,
 which finally allows excellent predictions of  the shape dependence of
 thermodynamic quantities \cite{Koenig.2004} and transport
 properties  \cite{Arns.2003} in
 porous media.

Various other physical phenomena can be described and studied by the Boolean
 model including percolation \cite{Mecke.1991,Mecke.2002b} and elasticity
 \cite{Arns.2009}. Many attempts have been made to predict mechanical
 properties from structural features; some of these are based on Boolean models
 and other random set models \cite{Torquato.2002,Jeulin.2005}.
 In practice, measurements taken on samples of a random structure are used
 to adjust the Boolean model to the given structure. Then one tries to draw
 reasonable conclusions from the properties of the properly adjusted
 Boolean model on physical properties of the real structure.
 Obviously it is crucial to take measurements which capture the significant
 geometric properties of the real structure and have as little redundancy as possible. Here the scalar-valued
 intrinsic volume densities have already shown to be a useful choice \cite{Arns.2003,Arns.2009}. On the foundational side, the importance of the intrinsic volumes  $V_0,\ldots,V_n$ is expressed by Hadwiger's  \cite{Hadwiger.1957} famous characterization theorem, which states that  the intrinsic volumes are a basis of the space of real-valued  continuous, additive and motion invariant functionals on the space of convex bodies $\cK$. As a consequence
 of the motion invariance, the intrinsic volumes reach their limits when it comes to the proper characterization of non-isotropic structures. Therefore one is interested in finding functionals which are
 sensitive to anisotropy and have as little redundancy as possible. A set of functionals which serves these purposes
 has been found and explored in recent years. All this has been done in the more general context of convex geometric analysis where the classification and characterization of
 additive functionals (valuations) on the space of convex  bodies $\mathcal{K}$ enjoying specific properties is a highly active field of research. Generalizations of Hadwiger's result, which concerns
  scalar-valued functionals, to vector-valued valuations which are motion covariant have already been found in the
  early '70s by Hadwiger and Schneider \cite{Hadwiger.1971b, Schneider.1972b, Schneider.1972c}. More recently,  tensor-valued valuations of higher rank have come into focus and it immediately turned out that in this case a basis cannot be determined that easily. The current mathematical study of tensor valuations has been initiated by McMullen \cite{McMullen.1997}.
  For $K\in\mathcal{K}$ and integers $r,s\geq 0$ and $0 \leq j\leq n-1$, the Minkowski tensors are defined by
\begin{equation}\label{zita}
\Phi_j^{r,s}(K):= \frac{1}{r!s!}\frac{\omega_{n-j}}{\omega_{n-j+s}} \int\limits_{\Sigma} x^r u^s \Lambda_j(K;d(x,u))
\end{equation}
and
\begin{equation}\label{zitb}
\Phi_n^{r,0}(K):= \frac{1}{r!}\int\limits_{\mathbb{R}^n}x^r \Lambda_n(K;dx),
\end{equation}
where we use the notation introduced in Section \ref{SectionPrel}.
In  \cite{McMullen.1997} McMullen conjectured that the  basic tensor valuations $Q^{m}\Phi_j^{r,s}, r,s,m\in \N_0$ with $r+s+2m=p$, span the space of continuous, additive and isometry covariant $\T^p$-valued functionals, for every $p\in \N_0$. Furthermore, it was already observed by McMullen that the basic tensor valuations satisfy linear dependencies and therefore do not form a basis of the vector space they span.
McMullen's conjecture was almost immediately confirmed by Alesker \cite{Alesker.1999, Alesker.1999b}. Later it was shown how a basis can be constructed and the dimension of the corresponding vector spaces was determined; see  \cite{Hug.2008b}.
More information on the mathematical and physical background of the Minkowski tensors can be found in \cite{Schneider.a,Hug.2008,Hug.2008b,SchroderTurk.2011,Mickel.2012}.
In particular, in \cite{SchroderTurk.2011,Mickel.2012} Minkowski tensors have already proved to be useful shape descriptors for anisotropic structures. Of the many characterization theorems for valuations with values in some  abelian group $G$ and related to the present work, we only mention \cite{Schneider.1978, Ludwig.2002, Ludwig.2003,  Ludwig.2013, Schneider.2012, Hug.Schneider.2013} which are concerned with  characterizations of curvature measures, moment vectors, moment matrices, covariance matrices and local tensor valuations.

A nice visualisation of a Boolean model can be obtained by a  dewetting
 process
 of thin liquid films.  Films rupture at random distributed defects and
 form holes, which grow in time, until the fluid material is pushed
 completely in thin filaments, which finally break up in droplets
 \cite{Jacobs.1998,Mantz.2008}. The time dependence of the experimentally
 measured Minkowski functionals of the film regions can be compared with
 analytic Minkowski density formulas for a Boolean model, which shows a
 good agreement.
 Other applications of a Boolean model on characterising spatial patterns
 are possible in the analysis of inhomogeneous
 distributions of galaxies \cite{Buchert.1994,Kerscher.2001}, the measurement of
 biometrical data  \cite{Mecke.2005}, or in the estimation of percolation
 thresholds \cite{Mecke.1991,Mecke.2002b}.
 While in most of the applications, the Boolean model was  isotropic (and
 stationary), we focus here the non-isotropic case  and
 analyze its distributional properties by tensorial quantities, the Minkowski
 tensors.
 Non-isotropic (and non-stationary) Boolean models should be exploited
 further,
 since many   applications are possible on composite and porous media but
 also on complex fluids such as colloidal dispersions, which show
 qualitatively rich phase diagrams and spatial structures. Minkowski
 tensors have already been used, for instance, to derive a density functional theory
 for non-spherical particles \cite{HansenGoos.2009,HansenGoos.2010}. Furthermore, local stereological estimators of the Minkowski tensors have been developed based on rotational integral formulas \cite{AuneauCognacq.2013,Jensen.2013}.
 Finally, we emphasize that in two and three dimensions, algorithms and free software for the computation of the Minkowski tensors are available; see \cite{SCHRODERTURK.2010,Mickel.2012}.

The structure and scope of the article are as follows. Section \ref{SectionPrel} introduces the notation and necessary mathematical background. In Section \ref{SectionTransl}, we recall in Theorem \ref{TransIntFormSupMeas} a translative integral formula for support measures and we deduce as an application a corresponding formula for Minkowski tensors (Theorem \ref{translativeMink}). Thereby we introduce mixed Minkowski tensors. In Section \ref{SectionDefDensities}, densities of a particle process and densities of a standard random set are defined for the translation invariant Minkowski tensors. Section \ref{SectionMeanValDensFormulas} contains the main results, namely a formula for the expected surface and volume tensor of a Boolean model observed in a window (Theorem \ref{ExpOfSectWithWindow}) and a corresponding result for the other Minkowski tensors (Theorem \ref{ExpectFormulasGeneral}).
To give a first impression we state the result in two dimensions in the following corollary using the notation which is introduced in the preliminaries and the subsequent sections.
\begin{cor}\label{ExpOfSectWithWindowDimTwo}
Let $Z$ be a stationary Boolean model in $\mathbb{R}^2$ with convex grains, let $W\in\mathcal{K}$ and $r,s\in\N_0$.
Then
\begin{align*}
\mathbb{E}\left[\Phi_0^{r,s}(Z \cap W)\right]
& = \Phi_0^{r,s}(W)\left(1-\mathrm{e}^{-\overline{V}_2(X)}\right)\\
&\quad +\mathrm{e}^{-
\overline{V}_2(X)}
\Big[ \ ^0\overline{\Phi}^{\,r,s}_{1,1}(W,X)
\mathbf{1}\{{s}\in 2\mathbb{N}_0\}\, \frac{2}{s! \omega_{s+1}}\, Q^{\frac{s}{2}} \Phi_2^{r,0}(W)\\
&\quad\quad\times
\left(\overline{V}_0(X)- \frac{1}{2}  \ ^0\overline{V}_{1,1}(X,X) \right)\Big];\\
\mathbb{E}\left[\Phi_{1}^{r,s}(Z\cap W)\right] &= \Phi_{1}^{r,s}(W)\left(1-\mathrm{e}^{-\overline{V}_2(X)}\right)+\Phi_2^{r,0}(W) \overline{\Phi}_{1}^{\,0,s}(X) \mathrm{e}^{-\overline{V}_2(X)};\\
\mathbb{E}[ \Phi_2^{r,0}(Z\cap W)]&=
\Phi_2^{r,0}(W)\left( 1-\mathrm{e}^{-\overline{V}_2(X)}\right).
\end{align*}
\end{cor}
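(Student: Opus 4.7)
The plan is to derive all three identities by specializing the general translative expected-value formulas proved later in the paper (Theorem \ref{ExpOfSectWithWindow} for $j=n-1,n$ and Theorem \ref{ExpectFormulasGeneral} for $0\le j\le n-2$) to the planar case $n=2$. I would organize the argument by the tensor index $j\in\{0,1,2\}$, treating the three cases from simplest to hardest.

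For the volume tensor ($j=2$), no general machinery is actually needed. Writing $\Phi_2^{r,0}(Z\cap W)=\frac{1}{r!}\int_W x^r\mathbf{1}\{x\in Z\}\,dx$, Fubini's theorem together with the classical void-probability identity $\P(x\in Z)=1-e^{-\overline{V}_2(X)}$ for a stationary Boolean model yields the displayed expression at once; this is also what Theorem \ref{ExpOfSectWithWindow} gives at $j=n=2$. For the surface tensor ($j=1=n-1$) I would specialize Theorem \ref{ExpOfSectWithWindow}. In the plane the translative decomposition of the surface measure produces only two nonvanishing summands: a $\Phi_1^{r,s}(W)(1-e^{-\overline{V}_2(X)})$ contribution from the part of $\partial W$ covered by $Z$, and a $\Phi_2^{r,0}(W)\overline{\Phi}_1^{\,0,s}(X)e^{-\overline{V}_2(X)}$ contribution from the part of $\partial Z$ lying strictly inside $W$. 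The product structure reflects that translation invariance acts on the $u^s$-part of the density while the $x^r$-part is absorbed by $W$; interior intersections of several boundaries have codimension at least $2$ and hence vanish in dimension $n=2$.

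The identity for $\Phi_0^{r,s}$ is the substantive one and requires Theorem \ref{ExpectFormulasGeneral} with $j=0$, $n=2$. The translative integral formula for the Euler-characteristic tensor produces three types of contributions: a single-body term $\Phi_0^{r,s}(W)(1-e^{-\overline{V}_2(X)})$; a $W$--grain mixed term yielding ${}^0\overline{\Phi}^{\,r,s}_{1,1}(W,X)$; and a grain--grain interaction term. Symmetrization over the unit sphere $\S^1$ forces the last contribution to vanish for odd $s$ and, for even $s$, to reduce to a multiple of $Q^{s/2}\Phi_2^{r,0}(W)$ since all odd moments of a spherical probability measure on $\S^1$ invariant under $u\mapsto -u$ vanish. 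Rearranging the alternating Poisson factorial-moment sum collapses the grain--grain piece to the factor $\overline{V}_0(X)-\frac{1}{2}\,{}^0\overline{V}_{1,1}(X,X)$, and tracking the constants $\omega_{n-j+s}$ in definition \eqref{zita} produces the explicit prefactor $2/(s!\,\omega_{s+1})$ stated in the corollary.

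The main obstacle I anticipate is precisely this last bookkeeping step: one has to identify which of the many terms in the general translative formula actually survive in $n=2$ (using that only intersections of codimension $\leq 2$ carry nontrivial curvature measures), verify the vanishing of the odd-$s$ contributions via antipodal symmetry on $\S^1$, and reconcile the combinatorial constants $\omega_{n-j+s}$, $1/(r!s!)$ and the $1/k!$-weights from the Poisson expansion so that the sum telescopes into the compact closed form written in the corollary.
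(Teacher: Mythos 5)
Your proposal is correct and follows essentially the same route as the paper: the corollary is obtained there precisely by specializing Theorem \ref{ExpOfSectWithWindow} (for $j=n-1=1$ and $j=n=2$) and Theorem \ref{ExpectFormulasGeneral} (for $j=0$) to $n=2$, where the index constraints $m_1+\ldots+m_l=ln-m_0$ and $m_1+\ldots+m_l=(l-1)n$ with $m_i\le n-1$ leave exactly the terms ${}^0\overline{\Phi}^{\,r,s}_{1,1}(W,X)$ and $\overline{V}_0(X)-\tfrac12\,{}^0\overline{V}_{1,1}(X,X)$ you identify, and the $Q^{s/2}$-proportionality of $\Phi_0^{0,s}$ via antipodal symmetry on $S^1$ is exactly the mechanism used at the end of the paper's proof of Theorem \ref{ExpectFormulasGeneral}. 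Your bookkeeping of the surviving indices and of the prefactor $2/(s!\,\omega_{s+1})$ matches the paper's computation.
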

If a Boolean model is observed, the quantities on the left-hand side of Corollary \ref{ExpOfSectWithWindowDimTwo} can be measured using the above mentioned software. The right-hand side involves quantities depending either only on the particle process $X$ associated with the Boolean model, and therefore only on the model parameters, or merely on the observation window $W$ or (and this case occurs only in the first equation) on both, $W$ and $X$. The last two equations  show that the quantities on the left-hand side for $r>0$ do not contain additional information about the Boolean model compared to the case $r=0$. This is not the case in the first equation   though there information is hidden in the mixed functionals and therefore difficult to extract. An easy interpretation exists for the information contained in the quantities on the left-hand side of the second equation for $s>0$, namely in terms of Fourier coefficients as explained in Subsection \ref{BMSmoothGrains}.

Then, Section \ref{SectionMeanValDensFormulas} contains formulas for the densities of the Boolean model for all translation invariant Minkowski tensors (Corollary \ref{DensityArbDim}), as well as a collection of density formulas in two and three dimensions (Corollary \ref{DensityDimTwoThree}). Furthermore, we show in a second part of Section \ref{SectionMeanValDensFormulas} that densities of the Minkowski tensors for isotropic standard random sets are just multiples of the densities of the intrinsic volumes. In the first part of Section \ref{SectionExamples} we discuss for a non-isotropic parametric planar Boolean model which information about the model parameters is contained in the Minkowski tensor densities. In the second part we show for a planar Boolean model with smooth grains that the expected curvature radius of the typical grain multiplied with the intensity can be expressed almost everywhere in terms of the surface tensor densities. In Section \ref{SectionSimulations} we carry out a simulation study for the parametric Boolean model from the previous section. We compare the analytical formulas for the surface tensor density and for the Euler characteristic to measurements on simulated data and we estimate the model parameters from the measurements of the volume fraction and the surface tensor density.

\section{Preliminaries}\label{SectionPrel}
We denote by $\mathbb{T}^p$ the vector space of symmetric tensors of rank $p$ over $\mathbb{R}^n$.
 We use the scalar product to identify $\mathbb{R}^n$ with its dual space; then $\mathbb{T}^p$ can be viewed as the vector space of symmetric $p$-linear functionals on $\mathbb{R}^n$. If we choose a basis $\{e_1,\ldots,e_n\}$ of $\mathbb{R}^n$, a tensor $T\in\mathbb{T}^p$ is uniquely determined by the $\binom{n+p-1}{p}$ values $ T(e_{i_1},\ldots,e_{i_p})$, $ 1\leq i_1\leq\ldots\leq i_p\leq n$.
Therefore, we can identify $\mathbb{T}^p$ with a ${\binom{n+p-1}{p}}$-dimensional Euclidean space, a fact which will be often useful. We define the norm $|\cdot|_{\infty}$ on $\mathbb{T}^p$ as the maximum norm on such a ${\binom{n+p-1}{p}}$-dimensional Euclidean space.
 The symmetric tensor product of symmetric tensors $a, b$ is denoted by $ab$, and $x^r$ is the $r$-fold symmetric tensor product of $x\in\mathbb{R}^n$.
The metric tensor $Q\in \T^2$ is defined by $Q(x,y)=\langle x,y\rangle$, for $x,y \in \R^n$.
By $\mathcal{K}$ we denote the family of nonempty, compact, convex subsets (convex bodies) of $\mathbb{R}^n$.
The system of nonempty, compact subsets of $\mathbb{R}^n$ is denoted by $\mathcal{C}$.
Let $A$ be a subset of $\mathbb{R}^n$. Then $\text{int\,}A, \partial A, \text{relint\,}A$ are, respectively, the interior, the boundary and the relative interior of $A$.
Let $\langle \cdot\,, \cdot \rangle$ be the scalar product and $\|\cdot\|$ the norm in $\mathbb{R}^n$.

A measure or signed measure on a topological space $E$ will always be defined on the $\sigma$-algebra $\mathcal{B}(E)$ of Borel sets. Lebesgue measure on $\mathbb{R}^n$ is denoted by $\lambda$. The $k$-dimensional Hausdorff measure is denoted by $\mathcal{H}^k$. By $\cH^k\llcorner A$ we denote the restriction of $\cH^k$ to a subset $A$.
We denote the unit ball by $B^n$, the unit sphere by $S^{n-1}$ and the unit cube by $C^n:= [0,1]^n$. Furthermore we shall need the `upper right boundary'
$$\partial^+ C^n :=\{(x_1,\ldots,x_n)\in \mathbb{R}^n: \max\limits_{1\leq i\leq n} x_i=1\}
$$
of the unit cube.
The volume of the unit ball $B^n$ is denoted by $\kappa_n:= \lambda(B^n)= {\pi^{{n}/{2}}}/{\Gamma(1+{n}/{2})}$
and the surface area of the unit sphere $S^{n-1}$ is given by $\omega_n:=n\kappa_n$.
The group of proper rotations is denoted by $SO_n$ and it is equipped with its standard topology. The unique normalized Haar measure on $SO_n$ is denoted by $\nu$.
 For $x \in \mathbb{R}^n$, let $p(K,x)$ denote the metric projection of $x$ to $K$. For $x\in K$ we define the normal cone of $K$ at $x$ by
$N(K,x):= \{u\in\mathbb{R}^n: p(K,x+u)=x\}$,
and for nonempty, convex $F \subset K$ let $N(K,F):= N(K,x)$, where $x\in \text{relint\,} F$.
For $x\notin K$ put $u(K,x):= (x-p(K,x))/\|x-p(K,x)\|$.
We need the support measures (generalized curvature measures) $\Lambda_0(K;\cdot), \ldots, \Lambda_{n-1}(K;\cdot) $ of a convex body $K \in \mathcal{K}$, which are defined by a local Steiner formula.
 Namely, for any $\epsilon >0$ and Borel set $\eta \subset \Sigma:= \mathbb{R}^n \times S^{n-1}$, the $n$-dimensional Hausdorff measure (volume) of the local parallel set
$
M_\epsilon(K,\eta):=\{x\in(K+\epsilon B^n)\setminus K: (p(K,x),u(K,x))\in \eta\}
$
is a polynomial in $\epsilon$, that is,
\[
\mathcal{H}^n(M_\epsilon(K,\eta))=\sum\limits_{k=0}^{n-1} \epsilon^{n-k}\kappa_{n-k} \Lambda_k(K;\eta);
\]
see \cite{Schneider.1993,Schneider.2008d} for further information. The support measures are related to the intrinsic volumes which are defined by $V_i(K) := \Lambda_i(K;\Sigma)$, for $i=0,\ldots,n-1$, and by $V_n(K)=\lambda(K)$. In addition, we define $\Lambda_n(K;\cdot)$ as the restriction of $\mathcal{H}^n$ to $K$. Once the support
measures are available, the Minkowski tensors can be defined as in \eqref{zita}, \eqref{zitb} in a straightforward way by integration of tensor-valued functions.

The convex ring $\mathcal{R}$ consists of all finite unions of convex bodies and its elements are called polyconvex sets. By additivity, the support measures and hence also the Minkowski tensors can be extended to $\mathcal{R}$.
The extended convex ring $\mathcal{S}$ is the system of sets whose intersection with any compact convex set belongs to the convex ring and its elements are called locally polyconvex sets.
For $p\in\mathbb{N}$ a function $\varphi: \mathcal{R}\rightarrow \mathbb{T}^p$ is conditionally bounded if, for $K\in\mathcal{K}$, the function is bounded on the set $\{L \in \mathcal{K}: L \subset K\}$ with respect to the norm $|\cdot|_\infty$.
We define by $c: \mathcal{C}\rightarrow \mathbb{R}^n$
the mapping that associates with each $C\in\mathcal{C}$ the center of the (uniquely determined) smallest ball containing $C$. The mapping $c$ is continuous with respect to the Hausdorff metric; see \cite[Lem. 4.1.1]{Schneider.2008d}.
Furthermore, we define the grain space $\mathcal{C}_0:= \{C\in\mathcal{C}: c(C)=0\}$
and correspondingly $\mathcal{K}_0:=\mathcal{C}_0\cap\mathcal{K}$ and $\mathcal{R}_0:=\mathcal{C}_0\cap\mathcal{R}$.

For $R\in\mathcal{R}$, we define $N(R):=\min \{ m \in\mathbb{N} : R=\bigcup_{i=1}^m K_i \text{ with } K_i\in\mathcal{K}\}$
and $N(\emptyset)=0.$ The function $N:\mathcal{R}\cup \{\emptyset\}:\rightarrow \mathbb{N}_0$ is measurable, compare \cite[Lem. 4.3.1]{Schneider.2008d}.
In the following we shall need a more theoretical viewpoint of the Boolean model which is used for example in \cite{Schneider.2008d}. We assume that the grain distribution $\Q$ is concentrated on $\cC_0$. Let $Z_0$ be the typical grain of $Z$, that is, a random compact set with distribution $\Q$. Then, under the integrability condition
\begin{equation}\label{IntCondBM}
\E\left[ \lambda(Z_0+B^n)\right]<\infty,
\end{equation}
there exists a unique Poisson point process $X$ in $\cK$ with intensity measure
$$
\Theta(\cA):=\gamma \int\limits_{\R^n}\int\limits_{\cK_0}\mathbf{1}_{\cA}(K+x)\Q(dK)dx,\quad \cA\in \mathcal{B}(\cK).
$$
The Boolean model $Z$ is the random closed set which is obtained as the union of the particles of $X$, that is,
\[
Z=\bigcup\limits_{K\in X} K.
\]
More information on the Boolean model can be found in  \cite{Stoyan.1995,Molchanov.1997} and \cite{Schneider.2008d}.
%
%
\section{Translative Integral Formulas}\label{SectionTransl}
In the following, we shall need an iterated translative integral formula which has been proved in the setting of sets with positive reach \cite{Rataj.1997, Hug.Rataj+} and in the framework of relative support measures in \cite{Hug.1999} (partly based on \cite{Kiderlen.Weil.1999, Hug.Last.2000}). The formula stated in Theorem \ref{TransIntFormSupMeas} is a special case of each of these more general versions.
For the statement of the theorem, we need the notion of a determinant of subspaces.
Let $L_1,\ldots,L_k \subset \mathbb{R}^n$ be linear subspaces with $\dim L_1+\ldots +\dim L_k=:m \leq n$. Then we choose an orthonormal basis in each subspace $L_j$ and define $\det(L_1,\ldots,L_k)$
as the $m$-dimensional volume of the parallelepiped which is spanned by the union of these orthonormal bases. On the other hand, if $\dim L_1+\ldots +\dim L_k \geq (k-1)n$, we define
\[ [L_1,\ldots,L_k]:= \det(L_1^\bot,\ldots,L_k^\bot).
\]
Moreover, if $A_1,\ldots,A_k$ are non-empty convex sets with
$\dim A_1+\ldots +\dim A_k \geq (k-1) n$
 and $L(A_i)$ denotes the linear subspace which is parallel to $\text{aff\,}A_i$, then we define
\[
[A_1,\ldots,A_k]:=[L(A_1),\ldots,L(A_k)].
\]
For a polytope $P$ and $0\leq k\leq n$ we denote by $\cF_k(P)$ the set of all $k$-faces.

\begin{theorem}\label{TransIntFormSupMeas}
Let $K_1,\ldots,K_k\in\mathcal{K}$, $j\in\{0,\ldots,n-1\}$, and $k\in\mathbb{N}$. Further, let $f:(\mathbb{R}^n)^{k}\times S^{n-1}\rightarrow \mathbb{R}$ be a nonnegative Borel measurable function. Then there exist (uniquely determined) Borel measures $\Lambda^{(j)}_{m_1,\ldots,m_k}(K_1,\ldots,K_k;\cdot)$ on $(\mathbb{R}^n)^{k}\times S^{n-1}$, for $m_1,\ldots,m_k\in\{j,\ldots,n\}$ with $m_1+\ldots+m_k=(k-1)n+j$, such that
\begin{align*}
& \int\limits_{\mathbb{R}^n}\ldots\int\limits_{\mathbb{R}^n}\int\limits_{\mathbb{R}^n\times S^{n-1}}
f(z,z-x_2,\ldots,z-x_k,u)\\
&\times \Lambda_j(K_1\cap(K_2+x_2)\cap\ldots\cap(K_k+x_k);d(z,u)) dx_2 \ldots
dx_k\\
& \quad = \sum\int\limits_{(\mathbb{R}^n)^{k}\times S^{n-1}}f(x_1,\ldots,x_k,u)\Lambda^{(j)}_{m_1,\ldots,m_k}(K_1,\ldots,K_k;d(x_1,\ldots,x_k,u)),
\end{align*}
where the summation extends over all $m_1,\ldots,m_k\in \{ j,\ldots,n \}$ such that $m_1+\ldots + m_k= (k-1)n+j$.
Let $A_i\subset \mathbb{R}^n$, $i \in\{1,\ldots,k\}$, $C\subset S^{n-1}$, $D^\prime\subset (\mathbb{R}^n)^{k-1}\times S^{n-1}$ and $D\subset(\mathbb{R}^n)^{k}\times S^{n-1}$ be Borel sets. Then the following is true:
\begin{itemize}
\item [{\rm(i)}]
$\Lambda^{(j)}_{m_1,\ldots,m_k}(K_1,\ldots,K_k;A_1\times\ldots\times A_k\times C)$ is symmetric with respect to permutations of $\{1,\ldots,k\}$;
\item [{\rm(ii)}] $\Lambda^{(j)}_j(K_1;A_1\times C)=\Lambda_j(K_1;A_1\times C)$ and
$$\Lambda^{(j)}_{n,m_2,\ldots,m_k}(K_1,\ldots,K_k;A_1\times D^\prime)\\
= \mathcal{H}^n(K_1\cap A_1)\Lambda^{(j)}_{m_2,\ldots,m_k}(K_2,\ldots,K_k;D^\prime);
$$
\item [{\rm(iii)}]
$\Lambda^{(j)}_{m_1,\ldots,m_k}(K_1,\ldots,K_k;\cdot)$ is a finite nonnegative Borel measure on $(\mathbb{R}^n)^{k}\times S^{n-1}$ which is supported by $S_1\times\ldots\times S_k\times S^{n-1}$, where $S_i=K_i$ if $m_i=n$, and $S_i = \partial K_i$ otherwise;
\item [{\rm(iv)}]
$\Lambda^{(j)}_{m_1,\ldots,m_k}(K_1,\ldots,K_k;A_1\times\ldots\times A_k\times C)$ is positively homogeneous of degree $m_i$ with respect to $(K_i,A_i)$;
\item [{\rm(v)}]
if $K_1,\ldots,K_k$ are polytopes, then
\begin{align*}
 &\Lambda^{(j)}_{m_1,\ldots,m_k}(K_1,\ldots,K_k;A_1\times\ldots\times A_k\times C)\\
 &\qquad = \sum\limits_{F_1\in\mathcal{F}_{m_1}(K_1)}\ldots \sum\limits_{F_k\in\mathcal{F}_{m_k}(K_k)}\frac{\mathcal{H}^{n-1-j}\left(\left(\sum\limits_{i=1}^k N(K_i,F_i)\right)\cap C\right)}{\omega_{n-j}}\\
 & \qquad\qquad\times
 [F_1,\ldots,F_k](\mathcal{H}^{m_1}\llcorner F_1)(A_1)\cdots (\mathcal{H}^{m_k}\llcorner F_k)(A_k);
\end{align*}

\item [{\rm(vi)}]
the map $(K_1,\ldots,K_k)\mapsto \Lambda^{(j)}_{m_1,\ldots,m_k}(K_1,\ldots,K_k;\cdot)$ from $(\mathcal{K})^k$ into the space of finite Borel measures on $(\mathbb{R}^n)^{k+1}$ is weakly continuous;
\item [{\rm(vii)}]
the map $(K_1,\ldots,K_k)\mapsto \Lambda^{(j)}_{m_1,\ldots,m_k}(K_1,\ldots,K_k;D)$ defined on $(\mathcal{K})^k$ is measurable;
\item [{\rm(viii)}]
the map $(K_1,\ldots,K_k)\mapsto\Lambda^{(j)}_{m_1,\ldots,m_k}(K_1,\ldots,K_k;\cdot)$ is additive in each of the first $k$ components;
\item [{\rm(ix)}]
if $(K_1^\prime,\ldots,K_k^\prime)\in (\mathcal{K})^k, \beta_1,\ldots,\beta_k\subset \mathbb{R}^n$ are open sets and $K_i\cap \beta_i = K_i^\prime \cap \beta_i$, for $i=1,\ldots,k$, then
\[
\Lambda^{(j)}_{m_1,\ldots,m_k}(K_1,\ldots,K_k;\cdot)=\Lambda^{(j)}_{m_1,\ldots,m_k}(K^\prime_1,\ldots,K^\prime_k;\cdot)
\]
on Borel subsets of $\beta_1\times\ldots\times\beta_k\times S^{n-1}$;
\item [{\rm(x)}]
$\Lambda^{(j)}_{m_1,\ldots,m_k}(K_1+x_1,\ldots,K_k+x_k;(A_1+x_1)\times\ldots\times (A_k+x_k)\times C)$

$ =\Lambda^{(j)}_{m_1,\ldots,m_k}(K_1,\ldots,K_k;A_1\times\ldots\times A_k\times C)$
for $ x_1,\ldots,x_k\in\mathbb{R}^n$.
\end{itemize}
\end{theorem}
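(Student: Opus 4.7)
The strategy is to reduce first to the case where $K_1,\ldots,K_k$ are polytopes, where a direct combinatorial computation yields the formula, and then to extend to general convex bodies by weak continuity of the support measures. This mirrors the standard route used by Schneider, Weil, Rataj and others for translative integral geometry.

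\emph{Step 1: Polytopal case.} Assume $K_1,\ldots,K_k\in\mathcal{K}$ are polytopes. For Lebesgue-almost every $(x_2,\ldots,x_k)\in(\mathbb{R}^n)^{k-1}$, the intersection $K_1\cap(K_2+x_2)\cap\ldots\cap(K_k+x_k)$ is a polytope whose $j$-faces are precisely the \emph{transverse} intersections $F_1\cap(F_2+x_2)\cap\ldots\cap(F_k+x_k)$, where $F_i\in\mathcal{F}_{m_i}(K_i)$ with $m_1+\ldots+m_k=(k-1)n+j$ (dimension counting: the codimensions $n-m_i$ must sum to $n-j$). For such generic translations, the normal cone of the intersection along this face equals the Minkowski sum $N(K_1,F_1)+\ldots+N(K_k,F_k)$. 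Using the classical translative formula $\int\mathcal{H}^j(F_1\cap(F_2+x_2)\cap\ldots\cap(F_k+x_k))\,dx_2\cdots dx_k=[F_1,\ldots,F_k]\,\prod_{i=1}^k\mathcal{H}^{m_i}(F_i)$ for affine subspaces in general position, I would substitute the face expansion of $\Lambda_j$ on the intersection into the left-hand side, swap integrations by Fubini, and collect terms according to the types $(m_1,\ldots,m_k)$. The $\omega_{n-j}^{-1}$ factor on the sphere part in (v) arises as the natural split of the $\mathcal{H}^{n-1-j}$-component across the summed normal cone. This simultaneously proves the integral identity and dictates that the right-hand side must be given by formula (v), which I would take as the definition of $\Lambda^{(j)}_{m_1,\ldots,m_k}$ for polytopes.

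\emph{Step 2: Uniqueness and extension.} For a fixed tuple $(K_1,\ldots,K_k)$, uniqueness of the mixed measures follows by specializing the integral identity to test functions of the form $f(x_1,\ldots,x_k,u)=g_1(x_1)g_2(x_2-x_1)\cdots g_k(x_k-x_1)h(u)$ with continuous $g_i,h$; running over such products pins down the measure on a generating system for $\mathcal{B}((\mathbb{R}^n)^k\times S^{n-1})$. To pass from polytopes to arbitrary convex bodies, approximate $K_i^{(\ell)}\to K_i$ in the Hausdorff metric by polytopes. The left-hand side is weakly continuous in $(K_1,\ldots,K_k)$ because support measures are weakly continuous and, after enclosing all $K_i^{(\ell)}$ in a fixed ball, the total masses stay uniformly bounded (using Steiner-type estimates), so dominated convergence applies. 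The polytopal mixed measures thus form a weakly Cauchy family, and their limit defines $\Lambda^{(j)}_{m_1,\ldots,m_k}(K_1,\ldots,K_k;\cdot)$ for arbitrary convex bodies. Finiteness, the support statement (iii) and the bound on total mass then follow by continuity from the polytopal formula.

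\emph{Step 3: Properties.} Symmetry (i) follows by relabeling in the integrand of the translative identity. Homogeneity (iv) and translation covariance (x) are inherited from the corresponding behavior of $\Lambda_j$ under scaling and translation combined with change of variables in $x_2,\ldots,x_k$. Additivity (viii) and measurability (vii) transfer from the analogous properties of $\Lambda_j$ via uniqueness. Locality (ix) follows since $\Lambda_j(K;\cdot)$ itself is local and the integrand can be localized in $z$ through an appropriate choice of $f$. Weak continuity (vi) is exactly what was established in Step~2. The degenerate cases in (ii) correspond to $k=1$ (tautological) and $m_i=n$ (the $K_i$-component absorbs the full Lebesgue mass), both of which can be read off directly from the derivation.

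\emph{Main obstacle.} The crux is the polytopal step: one must pin down the precise null set of ``bad'' translations where either some face tuple fails to meet transversally or the normal cone of the intersection does not split as a Minkowski sum, and then on the complement combine the combinatorial face expansion of the support measure of the intersection with the determinant/Hausdorff-measure formula for integrals over intersecting flats. Once this bookkeeping is executed cleanly, the remaining passage to general convex bodies and the verification of properties (i)--(x) are largely routine continuity and uniqueness arguments.
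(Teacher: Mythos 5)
Your outline follows the standard polytope-plus-approximation route of translative integral geometry, and that is indeed how the result is established in the sources. Note, however, that the paper itself does not carry out this argument: its proof of the theorem consists of a reference to \cite{Hug.1999} (Thm.\ 3.14 for the formula and the properties of the mixed measures, Cor.\ 4.10 for the polytopal representation (v)), and the only thing argued in the paper is property (x), deduced for polytopes from (v) and then for general bodies by additive induction, weak continuity and approximation. So you are reconstructing the cited proof rather than matching what the paper writes down.

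Within that reconstruction there is one genuine gap: the separation of the mixed measures of different multi-degrees. The translative identity, and your test-function argument with $f=g_1(x_1)g_2(x_2-x_1)\cdots g_k(x_k-x_1)h(u)$, only determine the \emph{sum} $\sum \Lambda^{(j)}_{m_1,\ldots,m_k}(K_1,\ldots,K_k;\cdot)$ over all admissible $(m_1,\ldots,m_k)$; they do not pin down the individual summands. As stated, your Step 2 therefore yields neither the claimed uniqueness of each $\Lambda^{(j)}_{m_1,\ldots,m_k}$ nor the componentwise weak convergence needed for your ``weakly Cauchy family'' when passing from polytopes to general bodies. The standard repair is to exploit the distinct homogeneity degrees: replace $K_i$ by $\varrho_i K_i$, observe that the left-hand side becomes a polynomial in $(\varrho_1,\ldots,\varrho_k)$ whose coefficient of $\varrho_1^{m_1}\cdots\varrho_k^{m_k}$ is (up to normalization) the corresponding mixed measure, and compare coefficients. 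In your plan, homogeneity (iv) appears only in Step 3 as a derived property, which makes the argument circular at this point; the multi-homogeneous expansion must be an input to Step 2, not an output of Step 3. Once that is inserted, the remainder of the plan (generic transversality of face tuples, the Jacobian $[F_1,\ldots,F_k]$ from the co-area formula, and the routine verification of (i)--(x)) is sound, modulo the usual bookkeeping over the null set of non-generic translations that you correctly identify as the technical crux.
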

\begin{proof}
For the proof compare \cite[Thm. 3.14]{Hug.1999}, which states a corresponding formula for relative support measures accompanied by the properties of mixed relative support measures. The property ${\rm (v)}$ of the classic support measures can be found in \cite[Cor. 4.10]{Hug.1999}.
Property ${\rm(x)}$ follows for polytopes $P_1,\ldots,P_k$ from property ${\rm (v)}$. Using algebraic induction, the weak continuity and an approximation argument it is obtained for arbitrary convex bodies.
\end{proof}
Now we use Theorem \ref{TransIntFormSupMeas} for the study of the translative integral of a Minkowski tensor.
In the following we shall apply integrals and limits to tensors meaning the application to the real-valued coordinates of a basis representation.
An important role is played by the following mixed tensorial functionals, which will be called {\it mixed Minkowski tensors}.

For $K_1,\ldots,K_k\in\mathcal{K}$, $j\in\{0,\ldots,n-1\}$, $k\in\mathbb{N}$ and $m_1,\ldots,m_k\in\{j,\ldots,n\}$ with $m_1+\ldots+m_k=(k-1)n+j$ let
$$
 ^j\Phi^{r,s}_{m_1,\ldots,m_k}(K_1,\ldots,K_k)
:= \frac{1}{r!s!}\frac{\omega_{n-j}}{\omega_{n-j+s}} \int\limits_{(\mathbb{R}^n)^k\times S^{n-1}} x_1^r u^s \Lambda^{(j)}_{m_1,\ldots,m_k}(K_1,\ldots,K_k;d(x_1,\ldots,x_k,u)).
$$
A special case of the mixed Minkowski tensors are the mixed functionals of translative integral geometry
\[
\ ^j V_{m_1,\ldots,m_k}:=\ ^j \Phi_{m_1,\ldots,m_k}^{0,0}
\]
with $j, k$ and $m_1,\ldots,m_k$ as above. To keep the notation consistent we deviate from the more common notation $V^{(j)}_{m_1,\ldots,m_k}$ which is used in \cite{Schneider.2008d}.
Theorem \ref{TransIntFormSupMeas} leads to the following translative integral formula for Minkowski tensors.
%
%
\begin{theorem}\label{translativeMink}
Let $K_1,\ldots,K_k\in\mathcal{K}$, $j\in\{0,\ldots,n-1\}$, $k,r,s\in\mathbb{N}$. Then
$$
\int\limits_{\mathbb{R}^n}\ldots\int\limits_{\mathbb{R}^n}
\Phi_j^{r,s}(K_1\cap(K_2+x_2)\cap\ldots\cap(K_k+x_k))
dx_2\ldots
dx_k = \sum\ ^j\Phi^{r,s}_{m_1,\ldots,m_k}(K_1,\ldots,K_k),
$$
where the summation extends over all $m_1,\ldots,m_k\in \{ j,\ldots,n \}$ such that $m_1+\ldots + m_k= (k-1)n+j$.

\begin{itemize}
\item [{\rm(i)}]
$^j\Phi^{r,s}_{m_1,\ldots,m_k}(K_1,\ldots,K_k)$ is symmetric with respect to permutations of $\{2,\ldots,k\}$. For $r=0$ it is even symmetric with respect to permutations of $\{1,\ldots,k\}$;

\item [{\rm(ii)}]
$^j\Phi^{r,s}_{n,m_2,\ldots,m_k}(K_1,\ldots,K_k)=\Phi^{r,0}_n(K_1)\ ^j\Phi^{0,s}_{m_2,\ldots,m_k}(K_2,\ldots,K_k)$,

$^j\Phi^{r,s}_{m_1,n,m_3,\ldots,m_k}(K_1,\ldots,K_k)=V_n(K_2)\ ^j\Phi^{r,s}_{m_1,m_3,\ldots,m_k}(K_1,K_3,\ldots,K_k)$, and
$^j \Phi^{r,s}_j(K)=\Phi_j^{r,s}(K)$;

\item [{\rm(iii)}]
$^j\Phi^{r,s}_{m_1,\ldots,m_k}(K_1,\ldots,K_k)$ is positively homogeneous of degree $m_1+r$ with respect to $K_1$ and of degree $m_i$ with respect to $K_i$ for $i\geq 2$;

\item [{\rm(iv)}]
if $K_1,\ldots,K_k$ are polytopes, then
\begin{align*}
&^j\Phi^{r,s}_{m_1,\ldots,m_k}(K_1,\ldots,K_k)\\
&\qquad = \frac{1}{r!s!} \frac{1}{\omega_{n-j+s}}
\sum\limits_{F_1\in\mathcal{F}_{m_1}(K_1)}\ldots \sum\limits_{F_k \in\mathcal{F}_{m_k}(K_k)} \,
\int\limits_{\big(\sum\limits_{i=1}^k N(K_i,F_i)\big)\cap S^{n-1}} u^s \,\mathcal{H}^{n-1-j}(du)\\
& \qquad\qquad \times [F_1,\ldots,F_k]\, \int\limits_{F_1}x_1^r\mathcal{H}^{m_1}(dx_1)\,\mathcal{H}^{m_2}(F_2)\cdots \mathcal{H}^{m_k}(F_k);
\end{align*}

\item [{\rm(v)}]
The map $(K_1,\ldots,K_k)\mapsto\ ^j\Phi^{r,s}_{m_1,\ldots,m_k}(K_1,\ldots,K_k)$ is additive and continuous with respect to the Hausdorff metric in each component.
\end{itemize}
\end{theorem}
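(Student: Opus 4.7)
The plan is to deduce Theorem \ref{translativeMink} and its properties directly from the corresponding statements for mixed support measures in Theorem \ref{TransIntFormSupMeas}, by applying the scalar integral formula coordinate-wise to the tensor-valued integrand $x_1^r u^s$.

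First I would establish the main identity. Fix a basis of $\mathbb{T}^{r+s}$ and represent the symmetric tensor product $x_1^r u^s$ through its coordinates; each coordinate is a polynomial which, after splitting into positive and negative parts, becomes a nonnegative Borel function on $(\mathbb{R}^n)^k \times S^{n-1}$ to which Theorem \ref{TransIntFormSupMeas} applies. Since property (iii) of Theorem \ref{TransIntFormSupMeas} ensures that $\Lambda^{(j)}_{m_1,\ldots,m_k}(K_1,\ldots,K_k;\cdot)$ is supported in $K_1\times\cdots\times K_k\times S^{n-1}$, these polynomial integrands are integrable. Summing coordinates and pulling out the constant $\omega_{n-j}/(r!s!\,\omega_{n-j+s})$, the left-hand side becomes the translative integral of $\Phi_j^{r,s}(K_1\cap(K_2+x_2)\cap\ldots\cap(K_k+x_k))$ (note that $z$ in Theorem \ref{TransIntFormSupMeas} plays the role of the point variable in the support measure of the intersection) and the right-hand side becomes $\sum\ ^j\Phi^{r,s}_{m_1,\ldots,m_k}(K_1,\ldots,K_k)$.

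Next I would verify the properties by transferring the analogous properties of the mixed support measures. Property (i) is immediate from property (i) of Theorem \ref{TransIntFormSupMeas}: the factor $u^s$ is invariant under all permutations, while $x_1^r$ distinguishes index $1$ unless $r=0$. For property (ii), the product structure in property (ii) of Theorem \ref{TransIntFormSupMeas} causes the integral of $x_1^r u^s$ to factorise; the normalising constants in the definitions of $\Phi_n^{r,0}(K_1)$ and $^j\Phi^{0,s}_{m_2,\ldots,m_k}$ match, and the statement for a middle $n$-index reduces to this case via the partial symmetry from (i). The identity $^j\Phi^{r,s}_j(K)=\Phi_j^{r,s}(K)$ follows from $\Lambda^{(j)}_j(K;\cdot)=\Lambda_j(K;\cdot)$. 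For (iii), property (iv) of Theorem \ref{TransIntFormSupMeas} already gives degree $m_i$ in $(K_i,A_i)$; the extra factor $r$ for $K_1$ comes from $x_1^r$ since the support in the first variable scales with $K_1$. For (iv), substituting the polytope representation (v) of Theorem \ref{TransIntFormSupMeas} and factoring $\mathcal{H}^{n-1-j}\llcorner(\sum N(K_i,F_i))$ against $u^s$ and $\mathcal{H}^{m_1}\llcorner F_1$ against $x_1^r$ yields precisely the stated expression. For (v), additivity follows from property (viii), and continuity from weak continuity (vi) of Theorem \ref{TransIntFormSupMeas}.

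The only non-routine step is the continuity claim in (v): weak convergence of finite Borel measures does not a priori imply convergence of integrals against the unbounded polynomial $x_1^r u^s$. The remedy is standard: if $K_i^{(\ell)}\to K_i$ in the Hausdorff metric, then all supports of $\Lambda^{(j)}_{m_1,\ldots,m_k}(K_1^{(\ell)},\ldots,K_k^{(\ell)};\cdot)$ lie in the fixed compact set $(K_1+B^n)\times\cdots\times(K_k+B^n)\times S^{n-1}$ for large $\ell$, so a smooth truncation of $x_1^r$ to a bounded continuous function does not alter the integrals and the weak convergence applies. Apart from this bookkeeping, every step is a direct transfer from the scalar results via the basis decomposition of $\mathbb{T}^{r+s}$.
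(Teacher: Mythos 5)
Your proposal is correct and follows essentially the same route as the paper: apply Theorem \ref{TransIntFormSupMeas} coordinate-wise to the positive and negative parts of the integrand $\frac{1}{r!s!}\frac{\omega_{n-j}}{\omega_{n-j+s}}x_1^ru^s$ and transfer properties (i)--(v) from the corresponding properties of the mixed support measures. The only divergence is cosmetic: you justify integrability via the compact supports and finiteness of the mixed measures on the right-hand side, whereas the paper bounds the left-hand side directly using $K_1\cap(K_i+x_i)\neq\emptyset\Leftrightarrow x_i\in K_1+(-K_i)$ and monotonicity of $V_j$; your explicit truncation argument for the continuity in (v) is a detail the paper leaves implicit, and it is handled correctly.
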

\begin{proof}
The formula follows by applying the previous theorem with the special integrand
\[
f: \begin{cases}\begin{array}{rll}
(\mathbb{R}^n)^k\times S^{n-1}& \rightarrow & \mathbb{R},\\
(x_1,\ldots,x_k,u)&\mapsto& \frac{1}{r!s!} \frac{\omega_{n-j}}{\omega_{n-j+s}}x_1^r u^s,
\end{array}\end{cases}
\]
more precisely, the theorem has to be applied to the positive and negative part of a representation of $f$ with respect to a fixed basis. The existence of the translative integral follows since
$K_1\cap (K_2+x_2)\neq \emptyset$ holds if and only if $x_2\in K_1+(-K_2)$ and since the intrinsic volumes are increasing with respect to set inclusion. Hence,
 \begin{align*}
& \int\limits_{\mathbb{R}^n}\ldots\int\limits_{\mathbb{R}^n}
|\Phi_j^{r,s}(K_1\cap(K_2+x_2)\cap\ldots\cap(K_k+x_k))|_\infty
dx_2\ldots
dx_k\\
& \leq \int\limits_{K_1+(-K_2)}\ldots\int\limits_{K_1+(-K_k)}
|\Phi_j^{r,s}(K_1\cap(K_2+x_2)\cap\ldots\cap(K_k+x_k))|_\infty dx_2\ldots
dx_k\\
& \leq \int\limits_{K_1+(-K_2)}\ldots\int\limits_{K_1+(-K_k)}
\frac{1}{r!s!}\frac{\omega_{n-j}}{\omega_{n-j+s}} \left(\max\limits_{x\in K_1} \|x\| \right)^r\,
V_j(K_1) dx_2\ldots
dx_k < \infty.
\end{align*}
The properties ${\rm(i)}$ to ${\rm(v)}$ follow from the corresponding properties of the mixed support measures, compare Theorem \ref{TransIntFormSupMeas}.
\end{proof}
In the special case $j=n-1$, Theorem \ref{translativeMink} reduces to the following corollary, which does not require mixed Minkowski tensors.

\begin{cor}\label{IteratedTranslated}
Let $k\in \mathbb{N}, K_1,\ldots, K_k \in\mathcal{K}, r,s\in\mathbb{N}_0$. Then
\begin{align*}
& \int\limits_{(\mathbb{R}^n)^{k-1}}
\Phi_{n-1}^{r,s}(K_1\cap(K_2+x_2)\cap\ldots\cap(K_k+x_k))
dx_2 \ldots dx_k\\
&  = \Phi_{n-1}^{r,s}(K_1) V_n(K_2)\cdots V_n(K_k)\\
& \quad
 + \sum\limits_{l=2}^{k}\big(\Phi_n^{r,0}(K_1) V_n(K_2)\cdots V_n(K_{l-1})\Phi_{n-1}^{0,s}(K_l) V_n(K_{l+1})\cdots V_n(K_k)\big)
\end{align*}
and
$$
\int\limits_{(\R^n)^{k-1}} \Phi_n^{r,0}(K_1\cap (K_2+x_2)\cap\ldots \cap (K_k+x_k))dx_2\ldots dx_k= \Phi_n^{r,0}(K_1)V_n(K_2)\cdots V_n(K_k).
$$
\end{cor}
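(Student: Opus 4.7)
The plan is to reduce the first identity to Theorem \ref{translativeMink} specialized to $j=n-1$, carry out the combinatorial analysis of the admissible index tuples, and then collapse each surviving mixed Minkowski tensor via property (ii). The second identity does not require the mixed machinery at all and follows directly from Fubini.

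For the first formula, Theorem \ref{translativeMink} with $j=n-1$ gives
\[
\int_{(\R^n)^{k-1}}\Phi_{n-1}^{r,s}(K_1\cap(K_2+x_2)\cap\ldots\cap(K_k+x_k))\,dx_2\cdots dx_k=\sum {}^{n-1}\Phi^{r,s}_{m_1,\ldots,m_k}(K_1,\ldots,K_k),
\]
where the sum runs over tuples $(m_1,\ldots,m_k)\in\{n-1,n\}^k$ satisfying $m_1+\ldots+m_k=(k-1)n+(n-1)=kn-1$. The constraint forces exactly one index $m_l$ to equal $n-1$ while all others equal $n$, giving $k$ summands indexed by $l\in\{1,\ldots,k\}$.

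Next I would apply property (ii) of Theorem \ref{translativeMink} iteratively to each of these summands. For $l=1$, every component $m_i$ with $i\ge 2$ equals $n$, so repeatedly using the second line of (ii) strips off a factor $V_n(K_i)$ for each $i\ge 2$, and the remaining reduced mixed tensor is ${}^{n-1}\Phi_{n-1}^{r,s}(K_1)=\Phi_{n-1}^{r,s}(K_1)$ by the third equation of (ii). This yields the term $\Phi_{n-1}^{r,s}(K_1)V_n(K_2)\cdots V_n(K_k)$. For $l\ge 2$, the first index is $m_1=n$, so the first equation of (ii) produces a factor $\Phi_n^{r,0}(K_1)$ and reduces to ${}^{n-1}\Phi^{0,s}_{m_2,\ldots,m_k}(K_2,\ldots,K_k)$; the remaining tuple has $n$'s in every slot except position $l$, so iterating the second equation of (ii) extracts factors $V_n(K_i)$ for $i\in\{2,\ldots,k\}\setminus\{l\}$ and leaves $\Phi_{n-1}^{0,s}(K_l)$. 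Summing the $k$ contributions gives the claimed identity.

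For the second formula, I would not go through Theorem \ref{translativeMink} at all. Instead, write
\[
\Phi_n^{r,0}\bigl(K_1\cap(K_2+x_2)\cap\ldots\cap(K_k+x_k)\bigr)=\frac{1}{r!}\int_{\R^n}x^r\,\mathbf{1}_{K_1}(x)\prod_{i=2}^{k}\mathbf{1}_{K_i+x_i}(x)\,\lambda(dx),
\]
and apply Fubini. Using $\mathbf{1}_{K_i+x_i}(x)=\mathbf{1}_{x-K_i}(x_i)$ and $\lambda(x-K_i)=V_n(K_i)$, the $x_2,\ldots,x_k$-integrals decouple and contribute $V_n(K_2)\cdots V_n(K_k)$, leaving $\tfrac{1}{r!}\int_{K_1}x^r\,dx=\Phi_n^{r,0}(K_1)$. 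The only subtle step is justifying Fubini, which is immediate from the nonnegativity of each coordinate of $x^r\mathbf{1}_{K_1}(x)$ up to sign on a fixed basis and the finiteness shown in the proof of Theorem \ref{translativeMink}. The main (minor) obstacle overall is the careful bookkeeping in the iterative application of property (ii) to keep track of which $V_n$ factors appear and in which order the reduction produces $\Phi_{n-1}^{0,s}(K_l)$ versus $\Phi_{n-1}^{r,s}(K_1)$; once this is done cleanly the claim is immediate.
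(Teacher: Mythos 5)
Your proposal is correct and follows essentially the same route as the paper: the first identity is obtained by specializing Theorem \ref{translativeMink} to $j=n-1$, observing that the constraint $m_1+\ldots+m_k=kn-1$ with $m_i\in\{n-1,n\}$ forces exactly one index to equal $n-1$, and collapsing each of the $k$ resulting mixed tensors via property (ii); the second identity is, as in the paper, a direct application of Fubini's theorem. Your explicit write-out of the Fubini step and of the iterative use of (ii) only elaborates what the paper leaves implicit.
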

\begin{proof}
By Theorem \ref{translativeMink}, we have
\begin{align*}
& \int\limits_{(\mathbb{R}^n)^{k-1}}
\Phi_{n-1}^{r,s}(K_1\cap(K_2+x_2)\cap\ldots\cap(K_k+x_k))
dx_2 \ldots dx_k\\
& = \sum\limits_{\substack{m_1,\ldots,m_k=n-1\\ m_1+\ldots +m_k=nk-1}}^n \ ^j\Phi^{r,s}_{m_1,\ldots,m_k}(K_1,\ldots,K_k)\\
& =\sum\limits_{l=1}^k \ ^j\Phi^{r,s}_{n,\ldots,n,\underbrace{\scriptstyle n-1}_{l\text{th comp.}},n,\ldots,n}(K_1,\ldots,K_k)\\
& =\Phi_{n-1}^{r,s}(K_1) V_n(K_2)\cdots V_n(K_k)\\
& \quad + \sum\limits_{l=2}^{k}\big(\Phi_n^{r,0}(K_1) V_n(K_2)\cdots V_n(K_{l-1})\Phi_{n-1}^{0,s}(K_l) V_n(K_{l+1})\cdots V_n(K_k)\big).
\end{align*}
The second relation follows by an application of Fubini's theorem.
\end{proof}
\section{Densities of Stationary Models}\label{SectionDefDensities}
In this section, we define densities of (mixed) Minkowski tensors for particle processes and random closed sets.

Recall from the end of Section \ref{SectionPrel} the stationary particle process $X$ which is associated with the Boolean model $Z$.
For a real-valued, translation invariant, measurable functional $\varphi:\mathcal{C} \rightarrow\mathbb{R}$, the $\varphi$-density of $X$ is
defined by
\[
\overline{\varphi}(X):=\gamma \int\limits_{\mathcal{C}_0}\varphi\, d\mathbb{Q},
\]
if $\varphi$ is nonnegative or $\mathbb{Q}$-integrable (cf.~\cite[(4.6)]{Schneider.2008d}).

In order to extend this definition to the setting of the (mixed) Minkowski tensors, we provide a lemma.

\begin{lemma}\label{LemPropForDens}
\begin{itemize}
\item [{\rm(i)}] Let $r,s\in\mathbb{N}_0$ and $j\in\{0,\ldots,n\}$. Then the functional $\Phi_j^{r,s}$ on $\mathcal{K}$ has an additive extension to $\mathcal{R}$, which will be denoted by the same symbol. The extension
 \[
 \Phi_j^{r,s}: \mathcal{R}\rightarrow \mathbb{T}^{r+s}
 \]
is measurable and conditionally bounded.
\item [{\rm(ii)}] Let $K_2,\ldots,K_k\in\mathcal{K}$, $r,s \in\mathbb{N}_0$, $j\in\{0,\ldots,n-1\}$, $k\in\mathbb{N}$ and $m_1,\ldots,m_k\in\{j,\ldots,n\}$ with $m_1+\ldots+m_k=(k-1)n+j$. Then the functional
\[
^j\Phi_{m_1,\ldots,m_k}^{r,s}(\cdot,K_2,K_3,\ldots,K_k)
\]
 on $\mathcal{K}$ has an additive extension to $\mathcal{R}$, which will be denoted by the same symbol. The extension
\[
^j\Phi_{m_1,\ldots,m_k}^{r,s}(\cdot,K_2,K_3,\ldots,K_k): \mathcal{R}\rightarrow \mathbb{T}^{r+s}
\]
is measurable and conditionally bounded.
The same holds with respect to the arguments $K_2,\ldots,K_k$.
\end{itemize}
\end{lemma}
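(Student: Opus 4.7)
The plan is to invoke Groemer's extension theorem (applied component-wise in a fixed basis of $\T^{r+s}$) for both functionals, and then verify measurability and conditional boundedness by direct estimates on the integral representations.

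For part~(i), $\Phi_j^{r,s}$ is additive on $\cK$ because the support measures $\Lambda_j(K;\cdot)$ are, and it is continuous in the Hausdorff metric because $\Lambda_j(K;\cdot)$ depends weakly continuously on $K$ and the polynomial integrand $x^r u^s$ is locally uniformly bounded. Groemer's theorem therefore yields a unique additive extension to $\cR$ given by the inclusion-exclusion identity
\[
\Phi_j^{r,s}\Bigl(\bigcup_{i=1}^m K_i\Bigr) = \sum_{\emptyset \neq I \subseteq \{1,\ldots,m\}} (-1)^{|I|+1} \Phi_j^{r,s}\Bigl(\bigcap_{i \in I} K_i\Bigr).
\]
Measurability follows by combining this formula with the measurable decomposition $R = K_1(R) \cup \ldots \cup K_{N(R)}(R)$ from Lemma~4.3.1 in \cite{Schneider.2008d}: the intersection over any subcollection is a measurable map into $\cK \cup \{\emptyset\}$, and composing with the Borel-measurable $\Phi_j^{r,s}$ on $\cK$ (with convention $\Phi_j^{r,s}(\emptyset):=0$) gives measurability on $\cR$. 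Conditional boundedness is immediate from \eqref{zita}--\eqref{zitb}: for $L \in \cK$ with $L \subseteq K$, the $|\cdot|_\infty$-norm of $\Phi_j^{r,s}(L)$ is bounded by a constant times $(\max_{x \in K}\|x\|)^{r}\, V_j(L)$, and $V_j(L) \leq V_j(K)$ by monotonicity of the intrinsic volumes on $\cK$.

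Part~(ii) proceeds identically. By property~(v) of Theorem~\ref{translativeMink}, the map $L \mapsto {}^j\Phi_{m_1,\ldots,m_k}^{r,s}(L,K_2,\ldots,K_k)$ is additive and continuous on $\cK$, so Groemer's theorem extends it uniquely to $\cR$ and measurability is inherited through the same inclusion-exclusion argument. For conditional boundedness I would use properties~(iii) and~(iv) of Theorem~\ref{TransIntFormSupMeas}: the mixed support measure $\Lambda^{(j)}_{m_1,\ldots,m_k}(L,K_2,\ldots,K_k;\cdot)$ is concentrated on $S_1 \times \ldots \times S_k \times S^{n-1}$ with $S_1 \subseteq L \subseteq K$, so the factor $\|x_1\|^r$ in the integrand is bounded by $(\max_{x \in K}\|x\|)^{r}$, while the total mass is controlled by $c\, V_{m_1}(K) \prod_{i=2}^{k} V_{m_i}(K_i)$---first checked on polytopes via the explicit formula~(v) of Theorem~\ref{TransIntFormSupMeas} and then extended by the weak continuity~(vi) together with polytopal approximation. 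The symmetry claim ``the same holds with respect to $K_2,\ldots,K_k$'' then follows from Theorem~\ref{translativeMink}(i).

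The only nontrivial point---the main but rather mild obstacle---is the measurability bookkeeping on $\cR$: one must ensure that the inclusion-exclusion expression behaves measurably when the intersections degenerate to empty or lower-dimensional convex sets. This is the classical argument underlying the corresponding statement for the scalar intrinsic volumes, and it transfers to the tensor setting without change, since $\T^{r+s}$ is finite-dimensional.
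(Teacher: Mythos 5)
Your overall architecture coincides with the paper's: identify $\T^{r+s}$ with a finite-dimensional Euclidean space and argue coordinate-wise, establish additivity and continuity on $\cK$, invoke Groemer's extension theorem for the additive extension to $\cR$, and then supply measurability and conditional boundedness (the paper simply cites \cite[Thm.~14.4.2]{Schneider.2008d} and \cite[Thm.~14.4.4]{Schneider.2008d} for the first two points, which is what your inclusion--exclusion/measurable-selection sketch reconstructs). Your direct estimate for conditional boundedness in part~(i), namely $|\Phi_j^{r,s}(L)|_\infty\leq c\,(\max_{x\in K}\|x\|)^r V_j(L)\leq c\,(\max_{x\in K}\|x\|)^r V_j(K)$, is fine and is in fact the same estimate the paper uses elsewhere (in the proof of Theorem~\ref{translativeMink}).

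The one step that does not go through as written is your conditional-boundedness argument for the mixed tensors in part~(ii). You claim the total mass of $\Lambda^{(j)}_{m_1,\ldots,m_k}(L,K_2,\ldots,K_k;\cdot)$ is bounded by $c\,V_{m_1}(K)\prod_{i\geq 2}V_{m_i}(K_i)$, ``first checked on polytopes via the explicit formula~(v) and then extended by weak continuity together with polytopal approximation.'' But the only estimate that formula~(v) yields by termwise bounds (using $\cH^{n-1-j}(\cdot)/\omega_{n-j}\leq 1$ and $[F_1,\ldots,F_k]\leq 1$) is $\prod_i\sum_{F_i\in\cF_{m_i}(K_i)}\cH^{m_i}(F_i)$, and the sum of face volumes is \emph{not} controlled by $V_{m_i}(K_i)$: for polytopes approximating a ball in $\R^3$ the total edge length diverges while $V_1$ stays bounded, so this bound blows up along exactly the approximating sequences you would need for the weak-continuity step. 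The inequality you want may well be true, but it requires keeping the external-angle factors and is not a consequence of the route you describe. The repair is immediate and is precisely what the paper does: you have already established continuity of $L\mapsto{}^j\Phi^{r,s}_{m_1,\ldots,m_k}(L,K_2,\ldots,K_k)$ on $\cK$ for Groemer's theorem, and $\{L\in\cK:L\subset K\}$ is compact in the Hausdorff metric by \cite[Thm.~1.8.4]{Schneider.1993}, so boundedness on that set is automatic. (Alternatively, nonnegativity of the mixed functionals lets you bound each one by the full translative integral $\int\cdots\int V_j(L\cap(K_2+x_2)\cap\ldots)\,dx_2\ldots dx_k\leq V_j(K)\prod_{i\geq2}\lambda(K+(-K_i))$.) With that substitution your argument is complete and agrees with the paper's.
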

\begin{proof}
We can identify $\mathbb{T}^{r+s}$ with a $\binom{n+r+s-1}{r+s}$-dimensional Euclidean space and so results on real-valued functions on $\mathcal{K}$ are still true for tensor-valued functions on $\mathcal{K}$ if applied coordinate-wise.
Both functionals, $\Phi_j^{r,s}$ and $^j\Phi_{m_1,\ldots,m_k}^{r,s}(\cdot,K_2,K_3,\ldots,K_k)$, are additive and continuous on $\mathcal{K}$ by Theorem \ref{translativeMink}, {\rm(v)}.
By Groemer's extension theorem (\cite[Thm. 14.4.2]{Schneider.2008d}) the continuity implies the existence of additive extensions to the convex ring $\mathcal{R}$. By \cite[Thm. 14.4.4]{Schneider.2008d},  the extensions are also measurable. The conditional boundedness follows in both cases from the continuity, since for given $K\in\mathcal{K}$ the set $\{L\in\mathcal{K}:L\subset K\}$ is compact by \cite[Thm. 1.8.4]{Schneider.1993}.
\end{proof}
Now we assume that the particles of $X$ are elements of the convex ring $\mathcal{R}$ and that the grain distribution $\mathbb{Q}$ satisfies the integrability condition
\begin{equation}\label{IntCondBM1}
\int\limits_{\mathcal{R}_0} 2^{N(C)} \lambda(C+\varrho B^n)\mathbb{Q}(dC) <\infty\quad \text{ for some (and hence all) } \varrho>0.
\end{equation}
If the particles are convex, the above integrability condition reduces to \eqref{IntCondBM}.
In \cite[Thm. 9.2.2]{Schneider.2008d}  it is shown that a functional $\varphi: \mathcal{R}\rightarrow \mathbb{R}$ is $\mathbb{Q}$-integrable, if it is translation invariant, additive, measurable and conditionally bounded.
Therefore the $\mathbb{Q}$-integrability  of the Minkowski tensors with the exponent $r=0$ follows directly from Lemma \ref{LemPropForDens} and the translation invariance of $\Phi_j^{0,s}$.
For the mixed Minkowski tensors, Lemma \ref{LemPropForDens}, {\rm(ii)}, yields the $\mathbb{Q}$-integrability with respect to each of the arguments $K_i$, for $2\leq i\leq k$, and only in the case $r=0$ also with respect to the argument $K_1$. This is a consequence of the translation invariance with respect to these arguments. If we want to define densities with respect to several arguments simultaneously, we have to assume that an integrability condition is satisfied.

Let $s\in\mathbb{N}_0$ and $j\in\{0,\ldots,n-1\}$. Then we define the $\Phi_j^{0,s}$-density of $X$ by
\[
\overline{\Phi}_j^{\,0,s}(X):= \gamma \int\limits_{\mathcal{K}_0} \Phi_j^{0,s}(K) \mathbb{Q}(dK).
\]

Let $K_1,\ldots,K_k\in\mathcal{K}$, $r$, $s \in\mathbb{N}_0$, $j\in\{0,\ldots,n-1\}$, $k\in\mathbb{N}$, $m_1,\ldots,m_k\in\{j,\ldots,n\}$ with $m_1+\ldots+m_k=(k-1)n+j$ and $l\in\{2,\ldots,k\}$. If the function
$$
(K_2,\ldots,K_l) \mapsto \ ^j\Phi_{m_1,\ldots,m_k}^{\,r,s}(K_1,\ldots,K_k)
$$
is $\mathbb{Q}^{l-1}$-integrable, we define mixed densities by
\begin{align*}
& \ ^j\overline{\Phi}^{\,r,s}_{m_1,\ldots,m_k}(K_1,X,\ldots,X,K_{l+1},\ldots,\ldots,K_k)\\
&\qquad:= \gamma^{l-1} \int\limits_{\mathcal{K}_0}\ldots\int\limits_{\mathcal{K}_0}  \ ^j\Phi_{m_1,\ldots,m_k}^{\,r,s}(K_1,\ldots,K_k) \mathbb{Q}(dK_2)\ldots \mathbb{Q}(dK_l).
\end{align*}
The mixed density is defined in the same way for other positions of the integration variables, if a corresponding integrability condition is fulfilled. Though, except for the case $r=0$, the first position will be omitted because the integrand is not translation invariant with respect to $K_1$.

Next we  define  densities of the Minkowski tensors for standard random closed sets. Recall from \cite[Def. 9.2.1]{Schneider.2008d}
that a standard random set in $\mathbb{R}^n$ is a random closed set $\tilde{Z}$ in $\mathbb{R}^n$ for which the realizations of $\tilde{Z}$ are a.s.~locally polyconvex, $\tilde{Z}$ is stationary, and
\begin{equation}\label{integrabcond}
\mathbb{E}[2^{N(\tilde{Z}\cap C^n)}]<\infty.
\end{equation}

By Lemma \ref{LemPropForDens} the coordinates of $\Phi_j^{0,s}$ are additive, measurable and conditionally bounded, obviously they are also translation invariant. Hence,
by \cite[Thm. 9.2.1]{Schneider.2008d}  we can  define densities $\overline{\Phi}_j^{\,0,s}(\tilde{Z})$ in the following way.

Let $\tilde{Z}$ be a standard random set, $W\in\mathcal{K}$ with $V_n(W)>0$, $s\in\mathbb{N}_0$,  $j\in\{0,\ldots,n-1\}$. Then the limit
\[
\overline{\Phi}_j^{\,0,s}(\tilde{Z}):= \lim\limits_{\varrho\rightarrow\infty}\frac{\mathbb{E}[\Phi_j^{0,s}(\tilde{Z}\cap \varrho W)]}{V_n(\varrho W)}
\]
exists and satisfies
\[
\overline{\Phi}_j^{\,0,s}(\tilde{Z})=\mathbb{E}[\Phi_j^{0,s}(\tilde{Z}\cap C^n) -\Phi_j^{0,s}(\tilde{Z}\cap \partial^+ C^n) ].
\]
In particular, $\overline{\Phi}_j^{\,0,s}(\tilde{Z})$ is independent of the choice of $W$.



\section{Mean Value and Density Formulas}\label{SectionMeanValDensFormulas}

\subsection{Stationary Boolean Model}
From now on we assume that the grain distribution $\Q$ is concentrated on $\cK_0$. Then the Boolean model $Z$ is a standard random set since the integrability condition \eqref{integrabcond} holds, compare \cite[p. 384]{Schneider.2008d}.
In the described setting the following density formulas for the scalar valued intrinsic volumes have been proven by Weil \cite[Cor. 7.5]{Weil.1990} and are stated in \cite[Thm. 9.1.5]{Schneider.2008d}.
\begin{theorem}[Weil 1990]\label{IntrinsicVolDensFormulas}
Let $Z$ be a stationary Boolean model $Z$ with convex grains. Then
\[\overline{V}_n(Z)=1-\mathrm{e}^{-\overline{V}_n(X)},\]
\[\overline{V}_{n-1}(Z)=\mathrm{e}^{-\overline{V}_n(X)}\overline{V}_{n-1}(X)\]
and
\[
\overline{V}_j(Z)=\mathrm{e}^{-\overline{V}_n(X)}\left[
\overline{V}_j(X)-\sum\limits_{l=2}^{n-j}\frac{(-1)^{l}}{l!}\sum
\limits_{\substack{m_1,\ldots,m_l=j+1\\
m_1+\ldots +m_l=(l-1)n+j}}^{n-1} \ ^j\overline{V}_{m_1,\ldots,m_l}(X,\ldots,X)
\right]
\]
for $j=0,\ldots,n-2$.
\end{theorem}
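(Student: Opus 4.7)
The plan is to follow the classical argument of Weil: treat $V_n$ directly via the void probability of the underlying Poisson process, and reduce the remaining densities via inclusion--exclusion, the Mecke formula, and the iterated translative integral formula of Theorem \ref{translativeMink}, concluding with a combinatorial resummation that generates the exponential factor $\mathrm{e}^{-\overline V_n(X)}$. For $V_n$, write $V_n(Z\cap W)=V_n(W)-\lambda(W\setminus Z)$; by Fubini and stationarity, $\E[V_n(Z\cap W)]=V_n(W)\cdot\P(0\in Z)$. The counting variable $\#\{K\in X:0\in K\}$ is Poisson distributed with mean $\int\mathbf 1\{0\in K\}\,\Theta(dK)=\gamma\int_{\cK_0}V_n(K)\,\Q(dK)=\overline V_n(X)$, so $\P(0\notin Z)=\mathrm e^{-\overline V_n(X)}$, which yields the first identity after dividing by $V_n(W)$.

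For $j<n$, almost every realisation of $Z\cap W$ is a finite union of convex bodies, so the additive extension of $V_j$ to $\cR$ (cf.\ Lemma \ref{LemPropForDens}) together with inclusion--exclusion gives
\begin{equation*}
V_j(Z\cap W)=\sum_{k\geq 1}\frac{(-1)^{k+1}}{k!}\sum_{(K_1,\dots,K_k)\in X^k_{\neq}}V_j(K_1\cap\cdots\cap K_k\cap W).
\end{equation*}
Taking expectations and applying Mecke's multivariate formula rewrites the right-hand side as an integral against $\Theta^k=\gamma^k(\lambda\otimes\Q)^{\otimes k}$. Integrating the first base-point variable against $W$ produces a leading $V_n(W)$ factor (lower-order boundary terms disappear on passage to the density, for which the formulation $\overline\Phi_j^{\,0,s}(Z)=\E[\Phi_j^{0,s}(Z\cap C^n)-\Phi_j^{0,s}(Z\cap\partial^+C^n)]$ from Section \ref{SectionDefDensities} is convenient). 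The remaining translative integral over the relative positions is then evaluated by the scalar version of Theorem \ref{translativeMink}, giving $\sum{}^jV_{m_1,\dots,m_k}(K_1,\dots,K_k)$ with $m_i\in\{j,\dots,n\}$ and $\sum m_i=(k-1)n+j$.

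The decisive step is to sort the resulting multi-index sum by the number $l$ of indices with $m_i<n$. By Theorem \ref{TransIntFormSupMeas}(ii) each slot with $m_i=n$ contributes a scalar factor $V_n(K_i)$ which, integrated against $\gamma\Q$, becomes $\overline V_n(X)$; by the full symmetry of ${}^jV_{m_1,\dots,m_k}$ in all its arguments (Theorem \ref{translativeMink}(i) with $r=0$) the choice of the $l$ ``small'' positions contributes a binomial weight $\binom{k}{l}$. Summing the series in $k$ for fixed $l$ yields
\begin{equation*}
\sum_{k\geq l}\frac{(-1)^{k+1}}{k!}\binom{k}{l}\overline V_n(X)^{k-l}=\frac{(-1)^{l+1}}{l!}\mathrm e^{-\overline V_n(X)},
\end{equation*}
supplying the exponential prefactor. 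For $l=1$ the constraint $m_1=(l-1)n+j=j$ is forced, producing $\mathrm e^{-\overline V_n(X)}\overline V_j(X)$; for $l\geq 2$ the arithmetic $\sum m_i=(l-1)n+j$ together with $m_i\leq n-1$ forces $m_i\geq j+1$, giving ${}^j\overline V_{m_1,\dots,m_l}(X,\dots,X)$, and the admissible range $2\leq l\leq n-j$ follows from the same arithmetic. For $j=n-1$ only $l=1$ is feasible, giving the second formula; for general $j\leq n-2$ the contributions assemble into the third.

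The main obstacle is not the algebra but the analytic justification of these manipulations: absolute convergence of the inclusion--exclusion series and the interchange of sum, expectation and translative integration. This is precisely where the integrability assumption \eqref{IntCondBM} on the typical grain enters, in combination with the monotonicity bound $V_j(K_1\cap\cdots\cap K_k\cap W)\leq V_j(K_1\cap W)$ and the Poisson-process control on the number of grains hitting a given compact set; these together produce a $k$-uniform integrable majorant. A minor secondary issue is the boundary correction in the passage $\E[V_j(Z\cap\varrho W)]/V_n(\varrho W)\to\overline V_j(Z)$, which is absorbed by the standard density representation recalled above.
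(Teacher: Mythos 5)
Your argument is correct and is essentially the paper's own route: the paper states this result only as a citation to Weil (1990) and Schneider--Weil, but the scheme you use (void probability for $V_n$; inclusion--exclusion plus the multivariate Mecke formula; the iterated translative formula; sorting the multi-indices by the number $l$ of entries below $n$ and resumming the series into $\mathrm{e}^{-\overline V_n(X)}$) is exactly the proof the paper gives for the tensor-valued generalization in Theorems \ref{ExpOfSectWithWindow} and \ref{ExpectFormulasGeneral} and Corollary \ref{DensityArbDim}, specialized to $r=s=0$. The combinatorial details you supply (the binomial weight $\binom{k}{l}$ from full symmetry, the forced bounds $j+1\le m_i\le n-1$, the range $2\le l\le n-j$, and the identity $\sum_{k\ge l}\tfrac{(-1)^{k+1}}{k!}\binom{k}{l}t^{k-l}=\tfrac{(-1)^{l+1}}{l!}\mathrm{e}^{-t}$) all check out.
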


In this section we first establish connections between mean values of the Min\-kow\-ski tensors of the intersection of $Z$ with a compact, convex window $W$ and the densities of the particle process $X$. For the translation invariant Minkowski tensors, we obtain thus in a second step corresponding relations between the densities of $Z$ and the densities of $X$ which generalize the above theorem for intrinsic volume densities.
\begin{theorem}\label{ExpOfSectWithWindow}
Let $Z$ be a stationary Boolean model in $\mathbb{R}^n$ with convex grains, let $W\in\mathcal{K}$ and $r,s\in\N_0$. Then
\begin{align*}
\mathbb{E}[\Phi_{n-1}^{r,s}(Z\cap W)] = \Phi_{n-1}^{r,s}(W)\left(1-\mathrm{e}^{-\overline{V}_n(X)}\right)+\Phi_n^{r,0}(W)\overline{\Phi}_{n-1}^{\,0,s}(X) \mathrm{e}^{-\overline{V}_n(X)}
\end{align*}
and
\begin{align*}
\mathbb{E}\left[ \Phi_n^{r,0}(Z\cap W)\right]=
\Phi_n^{r,0}(W)\left( 1-\mathrm{e}^{-\overline{V}_n(X)}\right).
\end{align*}
\end{theorem}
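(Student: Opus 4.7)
The second identity is the easy one: expanding the volume tensor as a Lebesgue integral and invoking Fubini gives
\[
\mathbb{E}[\Phi_n^{r,0}(Z\cap W)] = \frac{1}{r!}\int_W \mathbb{P}(x\in Z)\, x^r\, dx,
\]
and stationarity together with the Poisson void probability yields $\mathbb{P}(x\in Z)=\mathbb{P}(0\in Z)=1-\mathrm{e}^{-\overline{V}_n(X)}$, so the integral factors into $(1-\mathrm{e}^{-\overline{V}_n(X)})\Phi_n^{r,0}(W)$.

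For the first identity I would use inclusion--exclusion. Let $X^{(W)}:=\{K\in X:K\cap W\neq\emptyset\}$, which is a.s.\ finite by \eqref{IntCondBM}. Groemer's extension (via Lemma \ref{LemPropForDens}) applied coordinatewise to $Z\cap W=\bigcup_{K\in X^{(W)}}(K\cap W)$ gives
\[
\Phi_{n-1}^{r,s}(Z\cap W)=-\sum_{l\ge1}\frac{(-1)^l}{l!}\sum_{(K_1,\ldots,K_l)\in (X^{(W)})^{\neq}}\Phi_{n-1}^{r,s}\!\left(W\cap K_1\cap\ldots\cap K_l\right),
\]
the inner sum being over ordered tuples of pairwise distinct grains. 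After taking expectation, the multivariate Mecke--Slivnyak formula for the Poisson process $X$ (together with the structure of its intensity measure $\Theta$) rewrites the $l$-th summand as
\[
\gamma^l\int_{\mathcal{K}_0^l}\int_{(\mathbb{R}^n)^l}\Phi_{n-1}^{r,s}\!\left(W\cap(K_1+x_1)\cap\ldots\cap(K_l+x_l)\right)dx_1\cdots dx_l\,\mathbb{Q}^l(d(K_1,\ldots,K_l)).
\]

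Next I would invoke Corollary \ref{IteratedTranslated} with $W$ playing the role of the first (non-integrated) body and $K_1,\ldots,K_l$ the other $l$ bodies. This expands the inner iterated translation integral into a leading term $\Phi_{n-1}^{r,s}(W)\prod_i V_n(K_i)$ and $l$ cross terms of the form $\Phi_n^{r,0}(W)\prod_{i\neq l'} V_n(K_i)\cdot\Phi_{n-1}^{0,s}(K_{l'})$. Integrating against $\gamma^l\mathbb{Q}^l$ and collapsing the symmetric cross terms yields $\Phi_{n-1}^{r,s}(W)\,\overline{V}_n(X)^l+l\,\Phi_n^{r,0}(W)\,\overline{V}_n(X)^{l-1}\,\overline{\Phi}_{n-1}^{0,s}(X)$ per $l$. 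Summing with the prefactor $-(-1)^l/l!$ and recognizing the exponential series $\sum_{l\ge1}(-1)^{l+1}t^l/l!=1-\mathrm{e}^{-t}$ and $\sum_{l\ge1}(-1)^{l+1}t^{l-1}/(l-1)!=\mathrm{e}^{-t}$ (with $t=\overline{V}_n(X)$) produces exactly the claimed formula.

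The principal obstacle is justifying the interchange of $\mathbb{E}[\cdot]$ with the infinite inclusion--exclusion sum. The integrand can be bounded in $|\cdot|_\infty$ by $\tfrac{1}{r!s!}\tfrac{\omega_1}{\omega_{1+s}}(\sup_{x\in W}\|x\|)^r V_{n-1}(W\cap K_1\cap\ldots\cap K_l)$, and the resulting translative integral of $V_{n-1}$ is controlled via Corollary \ref{IteratedTranslated} for $r=s=0$ by a polynomial in $\overline{V}_n(X)$ and $\overline{V}_{n-1}(X)$; both quantities are finite under \eqref{IntCondBM}, so dominated convergence (or Fubini--Tonelli for nonnegative coordinates applied separately to positive and negative parts) provides the required absolute summability. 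Once this is in place the calculation above goes through routinely.
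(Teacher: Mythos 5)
Your argument for the main identity is essentially the paper's: the paper obtains the inclusion--exclusion expansion and its absolute convergence by citing \cite[Thm.~9.1.2]{Schneider.2008d}, then applies Corollary \ref{IteratedTranslated} and sums the exponential series exactly as you do; your only deviation is that you re-derive that expansion from the multivariate Mecke formula and supply the domination bound $|\Phi_{n-1}^{r,s}(L)|_\infty \le \tfrac{1}{r!s!}\tfrac{\omega_1}{\omega_{1+s}}(\sup_{x\in W}\|x\|)^r V_{n-1}(L)$ yourself, which is correct and just unpacks the citation. For the second identity the paper says it ``follows in a similar way'' (i.e.\ again via the series and the translative formula), whereas your route through Fubini and the void probability $\mathbb{P}(0\in Z)=1-\mathrm{e}^{-\overline{V}_n(X)}$ is a legitimate and more elementary shortcut; it buys a one-line proof at the cost of invoking the volume-fraction formula, while the paper's uniform treatment keeps both identities as instances of the same machinery. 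I see no gap in either part.
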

\begin{proof}
By \cite[Thm. 9.1.2]{Schneider.2008d}, we have
$
\mathbb{E}[|\Phi_{n-1}^{r,s}(Z\cap W)|]<\infty,
$
where the absolute value and the relation $<$ are applied coordinate-wise and
\begin{equation}\label{n-1}
 \mathbb{E}[\Phi_{n-1}^{r,s}(Z\cap W)]= \sum\limits_{k=1}^{\infty} \frac{(-1)^{k-1}}{k!} \gamma^k
\int\limits_{\mathcal{K}_0}\ldots\int\limits_{\mathcal{K}_0}
\Phi(W,K_1,\ldots,K_k)\mathbb{Q}(dK_1)\ldots\mathbb{Q}(dK_k),
\end{equation}
where
\[ \Phi(W,K_1,\ldots,K_k)
= \int\limits_{(\mathbb{R}^n)^k}\Phi_{n-1}^{r,s}(W\cap(K_1+x_1)\cap\ldots\cap(K_k+x_k))dx_1\ldots dx_k
\]
and the series on the right-hand side of \eqref{n-1} converges absolutely.
In the next step, we apply the iterated translative formula, Corollary \ref{IteratedTranslated}, to get
\begin{multline*}
 \Phi(W,K_1,\ldots,K_k) = \Phi_{n-1}^{r,s}(W)V_n(K_1)\cdots V_n(K_k)
  +\sum\limits_{l=1}^k \Phi_n^{r,0}(W)\\
  \times V_n(K_1)\cdots V_n(K_{l-1})\Phi_{n-1}^{0,s}(K_l)V_n(K_{l+1})\cdots V_n(K_k).
\end{multline*}

Therefore, we obtain
\begin{align*}
\mathbb{E}[\Phi_{n-1}^{r,s}(Z\cap W)]
& = \sum\limits_{k=1}^\infty \frac{(-1)^{k-1}}{k!}
\Big(\Phi_{n-1}^{r,s}(W) \overline{V}_n(X)^k +k \Phi_n^{r,0}(W)\overline{V}_n(X)^{k-1} \overline{\Phi}_{n-1}^{\,0,s}(X)\Big)\\
& = \Phi_{n-1}^{r,s}(W)\big(1-\mathrm{e}^{-\overline{V}_n(X)}\big)+\Phi_n^{r,0}(W)\overline{\Phi}_{n-1}^{\,0,s}(X) \mathrm{e}^{-\overline{V}_n(X)}.
\end{align*}
The existence of the occurring densities follows from the discussion in Section \ref{SectionDefDensities}. The second asserted relation follows in a similar way as the first one.
\end{proof}
%
\begin{cor}\label{Density}
If $s\in\mathbb{N}_0$, then
\[
\overline{\Phi}_{n-1}^{\,0,s}(Z)= \overline{\Phi}_{n-1}^{\,0,s}(X)\mathrm{e}^{-\overline{V}_n(X)}.
\]
\end{cor}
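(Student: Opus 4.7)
The plan is to combine Theorem~\ref{ExpOfSectWithWindow} with the definition of the density $\overline{\Phi}_{n-1}^{\,0,s}(Z)$ given at the end of Section~\ref{SectionDefDensities}, and then let the observation window expand. Fix any $W\in\mathcal{K}$ with $V_n(W)>0$. By that definition,
\[
\overline{\Phi}_{n-1}^{\,0,s}(Z) = \lim_{\varrho\to\infty}\frac{\mathbb{E}[\Phi_{n-1}^{0,s}(Z\cap \varrho W)]}{V_n(\varrho W)},
\]
so the task reduces to computing the right-hand side using the mean-value formula.

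Next, I would apply Theorem~\ref{ExpOfSectWithWindow} with $r=0$ and window $\varrho W$, which yields
\[
\mathbb{E}[\Phi_{n-1}^{0,s}(Z\cap \varrho W)] = \Phi_{n-1}^{0,s}(\varrho W)\bigl(1-\mathrm{e}^{-\overline{V}_n(X)}\bigr) + V_n(\varrho W)\,\overline{\Phi}_{n-1}^{\,0,s}(X)\,\mathrm{e}^{-\overline{V}_n(X)},
\]
after noting $\Phi_n^{0,0}(\varrho W)=V_n(\varrho W)$. The only nontrivial ingredient is the scaling behaviour of $\Phi_{n-1}^{0,s}$. Because $\Lambda_{n-1}(\varrho W;\cdot)$ transforms under $x\mapsto\varrho x$ like $\varrho^{n-1}$ times $\Lambda_{n-1}(W;\cdot)$ in the spatial coordinate while leaving the normal coordinate untouched, and because the integrand $u^s$ of $\Phi_{n-1}^{0,s}$ depends only on $u\in S^{n-1}$, one has $\Phi_{n-1}^{0,s}(\varrho W) = \varrho^{n-1}\Phi_{n-1}^{0,s}(W)$. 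Combined with $V_n(\varrho W) = \varrho^n V_n(W)$, this gives
\[
\frac{\mathbb{E}[\Phi_{n-1}^{0,s}(Z\cap \varrho W)]}{V_n(\varrho W)} = \frac{1}{\varrho}\cdot\frac{\Phi_{n-1}^{0,s}(W)\bigl(1-\mathrm{e}^{-\overline{V}_n(X)}\bigr)}{V_n(W)} + \overline{\Phi}_{n-1}^{\,0,s}(X)\,\mathrm{e}^{-\overline{V}_n(X)}.
\]

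Finally, passing to the limit $\varrho\to\infty$ makes the first term vanish (it is $O(\varrho^{-1})$ coordinatewise in the tensor norm $|\cdot|_\infty$), and the claimed identity follows. There is no real obstacle here; the argument is a short combination of the mean-value formula, the elementary homogeneity of the tensor $\Phi_{n-1}^{0,s}$, and the definition of the density. The only point that deserves an explicit word is why the homogeneity degree of $\Phi_{n-1}^{0,s}$ is $n-1$ rather than $n-1+s$, and this is precisely because the tensor weight $u^s$ is a function on $S^{n-1}$ and is thus unaffected by dilation of the underlying body.
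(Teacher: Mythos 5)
Your proposal is correct and follows exactly the paper's own route: apply Theorem \ref{ExpOfSectWithWindow} with $r=0$ to the dilated window $\varrho W$, divide by $V_n(\varrho W)$, and let $\varrho\to\infty$ so that the term of homogeneity degree $n-1$ vanishes. You simply spell out the homogeneity argument that the paper leaves implicit; the identification of the degree as $n-1$ (not $n-1+s$) is right and consistent with Theorem \ref{translativeMink}(iii).
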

\begin{proof}
 Let $W\in\mathcal{K}$ with $V_n(W)>0$. Then Theorem \ref{ExpOfSectWithWindow} yields
$$
\overline{\Phi}_{n-1}^{\,0,s}(Z)
 = \lim\limits_{\varrho\rightarrow\infty}\frac{\mathbb{E}[\Phi_{n-1}^{0,s}(Z\cap \varrho W)]}{V_n(\varrho W)}
 = \overline{\Phi}_{n-1}^{\,0,s}(X)\mathrm{e}^{-\overline{V}_n(X)}.
$$
\end{proof}

Relations between $\mathbb{E}[\Phi_j^{r,s}(Z \cap W)]$ and densities of $X$ become more complicated  as $j$ is getting smaller.
\begin{theorem}\label{ExpectFormulasGeneral}
Let $Z$ be a stationary Boolean model in $\R^n$ with convex grains, $W\in\cK$ and $r,s\in\N_0$. Then, we obtain
for $n\geq 3$ and $j=1,\ldots,n-2$ that
\begin{align*}
\E\left[ \Phi_j^{r,s}(Z\cap W)\right]
&=\Phi_j^{r,s}(W)\left( 1-\mathrm{e}^{- \overline{V}_n(X)}\right)+ \sum\limits_{m_0=j+1}^{n-1}\;\sum\limits_{l=1}^{m_0-j} \frac{(-1)^{l-1}}{l!} \\
&\quad\quad \times \sum\limits_{\substack{m_1,\ldots,m_l=j\\ m_1+\ldots +m_l=ln+j-m_0}}^{n-1} \ ^j\overline{\Phi}_{m_0,m_1,\ldots,m_l}^{\,r,s}(W,X,\ldots,X) \mathrm{e}^{-\overline{V}_n(X)}\\
& \quad +\Phi_n^{r,0}(W)\sum\limits_{l=1}^{n-j} \frac{(-1)^{l-1}}{l!} \sum\limits_{\substack{m_1,\ldots,m_l=j\\ m_1+\ldots +m_l=(l-1)n+j}}^{n-1} \ ^j\overline{\Phi}_{m_1,\ldots,m_l}^{\,0,s}(X,\ldots,X) \mathrm{e}^{-\overline{V}_n(X)}
\end{align*}
and
for $n\geq 2$ and $j=0$ that
\begin{align*}
\E\left[ \Phi_0^{r,s}(Z\cap W)\right]
&=\Phi_0^{r,s}(W)\left( 1-\mathrm{e}^{- \overline{V}_n(X)}\right)\\
&\quad+ \sum\limits_{m_0=1}^{n-1}\;\sum\limits_{l=1}^{m_0} \frac{(-1)^{l-1}}{l!} \sum\limits_{\substack{m_1,\ldots,m_l=0\\ m_1+\ldots +m_l=ln-m_0}}^{n-1} \ ^0\overline{\Phi}_{m_0,m_1,\ldots,m_l}^{\,r,s}(W,X,\ldots,X) \mathrm{e}^{-\overline{V}_n(X)}\\
& \quad+\mathbf{1}\{s\in 2\N_0\}  \frac{2}{s!\omega_{s+1}}Q^{\frac{s}{2}}  \Phi_n^{r,0}(W)\\
&\quad\quad  \times \sum\limits_{l=1}^{n} \frac{(-1)^{l-1}}{l!} \sum\limits_{\substack{m_1,\ldots,m_l=0\\ m_1+\ldots +m_l=(l-1)n}}^{n-1} \ ^0\overline{V}_{m_1,\ldots,m_l}(X,\ldots,X) \mathrm{e}^{-\overline{V}_n(X)}.
\end{align*}
\end{theorem}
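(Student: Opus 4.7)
The plan is to mirror the two-step argument used in the proof of Theorem \ref{ExpOfSectWithWindow}, now invoking the general translative formula of Theorem \ref{translativeMink} in place of its $j=n-1$ specialisation (Corollary \ref{IteratedTranslated}), and then handling a more intricate combinatorial sum. Applying \cite[Thm.~9.1.2]{Schneider.2008d} coordinate-wise to the tensor $\Phi_j^{r,s}$ (which is additive, measurable and conditionally bounded on $\mathcal{R}$ by Lemma \ref{LemPropForDens}) gives, exactly as before, the absolutely convergent representation
\begin{equation*}
\E[\Phi_j^{r,s}(Z\cap W)]=\sum_{k=1}^{\infty}\frac{(-1)^{k-1}}{k!}\,\gamma^k\int_{\cK_0^k}\Psi_k(W,K_1,\ldots,K_k)\,\Q^k(d(K_1,\ldots,K_k)),
\end{equation*}
with $\Psi_k(W,K_1,\ldots,K_k):=\int_{(\R^n)^k}\Phi_j^{r,s}(W\cap(K_1+x_1)\cap\cdots\cap(K_k+x_k))\,dx_1\cdots dx_k$; the absolute convergence is justified as in the proof of Theorem \ref{ExpOfSectWithWindow}.

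Theorem \ref{translativeMink} then decomposes $\Psi_k$ into a sum of mixed Minkowski tensors ${^j\Phi^{\,r,s}_{m_0,m_1,\ldots,m_k}(W,K_1,\ldots,K_k)}$ indexed by $(m_0,\ldots,m_k)\in\{j,\ldots,n\}^{k+1}$ with $m_0+\cdots+m_k=kn+j$. I would classify the summands by the number $l\in\{0,\ldots,k\}$ of indices $i\ge1$ with $m_i<n$, and separately by whether $m_0\in\{j,\ldots,n-1\}$ or $m_0=n$. The symmetry in the permutations of $\{1,\ldots,k\}$ from Theorem \ref{translativeMink}\,(i) contributes a combinatorial factor $\binom{k}{l}$, while property (ii) pulls out a factor $V_n(K_i)$ for every index $i\ge1$ with $m_i=n$ and a factor $\Phi_n^{r,0}(W)$ when $m_0=n$. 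After integrating against $\Q^k$, recognising the densities $\overline{V}_n(X)$, ${^j\overline{\Phi}^{\,r,s}_{m_0,m_1,\ldots,m_l}(W,X,\ldots,X)}$ and ${^j\overline{\Phi}^{\,0,s}_{m_1,\ldots,m_l}(X,\ldots,X)}$, and exchanging the $k$-summation with the inner sums, the telescoping identity
\begin{equation*}
\sum_{k\ge l}\frac{(-1)^{k-1}}{k!}\binom{k}{l}\overline{V}_n(X)^{k-l}=\frac{(-1)^{l-1}}{l!}\,\mathrm{e}^{-\overline{V}_n(X)}\qquad(l\ge1)
\end{equation*}
produces the exponential. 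The case $l=0$ forces $m_0=j$ and $m_1=\cdots=m_k=n$, yielding the boundary term $\Phi_j^{r,s}(W)(1-\mathrm{e}^{-\overline{V}_n(X)})$. The constraints $j\le m_i\le n-1$ translate into the summation ranges $1\le l\le m_0-j$ when $m_0<n$ and $1\le l\le n-j$ when $m_0=n$, matching the statement for $j\in\{1,\ldots,n-2\}$.

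For $j=0$ the only genuine additional ingredient is the identity
\begin{equation*}
{^0\Phi^{0,s}_{m_1,\ldots,m_l}(K_1,\ldots,K_l)}=\mathbf{1}\{s\in2\N_0\}\,\frac{2}{s!\,\omega_{s+1}}\,Q^{s/2}\,{^0V_{m_1,\ldots,m_l}(K_1,\ldots,K_l)},
\end{equation*}
which I would deduce by a homogeneity argument. For any non-empty convex body the $u$-marginal of $\Lambda_0$ is the universal measure $\omega_n^{-1}\mathcal{H}^{n-1}\llcorner S^{n-1}$ (since the normal cones at vertices of a polytope tile $\R^n$; the general case follows by continuity). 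Applying the translative formula of Theorem \ref{TransIntFormSupMeas} to $V_0$ with integrand $\mathbf{1}_C(u)$ and using the degree of homogeneity in $K_i$ from Theorem \ref{TransIntFormSupMeas}\,(iv) to separate each tuple $(m_1,\ldots,m_l)$, one obtains that the $u$-marginal of $\Lambda^{(0)}_{m_1,\ldots,m_l}(K_1,\ldots,K_l;\cdot)$ equals $\omega_n^{-1}\,{^0V_{m_1,\ldots,m_l}(K_1,\ldots,K_l)}\,\mathcal{H}^{n-1}\llcorner S^{n-1}$. Combined with the standard moment formula $\int_{S^{n-1}}u^s\,d\mathcal{H}^{n-1}(u)=\mathbf{1}\{s\in2\N_0\}\,\frac{2\omega_{n+s}}{\omega_{s+1}}Q^{s/2}$, this yields both the $Q^{s/2}$ factor and the parity constraint in the third summand of the $j=0$ formula. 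The main technical obstacle is the combinatorial reorganisation in the middle paragraph (matching each value of $l$ with the correct exponential and with the correct mixed density) together with this universality of the $u$-marginal at level $j=0$; the latter is precisely what distinguishes the $j=0$ formula from its $j\ge1$ counterpart.
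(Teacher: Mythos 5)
Your proposal is correct and follows essentially the same route as the paper's proof: the absolutely convergent series expansion from \cite[Thm.~9.1.2]{Schneider.2008d}, the translative decomposition via Theorem \ref{translativeMink}, the reorganisation by the number $l$ of indices different from $n$ together with the binomial--exponential identity $\sum_{k\ge l}\frac{(-1)^{k-1}}{k!}\binom{k}{l}t^{k-l}=\frac{(-1)^{l-1}}{l!}\mathrm{e}^{-t}$, and property (ii) to factor out $\Phi_n^{r,0}(W)$ and the volumes $V_n(K_i)$. Your derivation of the $j=0$ proportionality (universality of the $u$-marginal of $\Lambda_0$ combined with separation by homogeneity degrees) is merely a measure-level rephrasing of the paper's comparison of the translative expansions of $\Phi_0^{0,s}$ and $V_0$, and it produces the same constant $\mathbf{1}\{s\in2\N_0\}\,\frac{2}{s!\,\omega_{s+1}}Q^{s/2}$.
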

\begin{proof}
By \cite[Thm. 9.1.2]{Schneider.2008d}, we have $\mathbb{E}[|\Phi_j^{r,s}(Z\cap W)|]<\infty$ (see above) and
$$
\mathbb{E}\left[\Phi_j^{r,s}(Z\cap W)\right]
 = \sum\limits_{k=1}^{\infty} \frac{(-1)^{k-1}}{k!} \gamma^k
\int\limits_{\mathcal{K}_0}\ldots\int\limits_{\mathcal{K}_0}
\Phi(W,K_1,\ldots,K_k)\mathbb{Q}(dK_1)\ldots\mathbb{Q}(dK_k), 
$$
where
$$
 \Phi(W,K_1,\ldots,K_k)
= \int\limits_{(\mathbb{R}^n)^k}\Phi_j^{r,s}(W\cap(K_1+x_1)\cap\ldots\cap(K_k+x_k))dx_1\ldots dx_k.
$$

The function
\[\Phi(W,\cdot,\ldots,\cdot): \mathcal{K}^k\rightarrow \mathbb{T}^{r+s}\]
is $\mathbb{Q}^k$-integrable by the proof of \cite[Thm. 9.1.2]{Schneider.2008d}. For $r=s=0$ the mixed Minkowski tensors are nonnegative, real-valued functionals and therefore they are also $\mathbb{Q}^k$-integrable by Theorem \ref{translativeMink}. For general $r$, $s\in\mathbb{N}_0$, we have
\[
|\ ^j\Phi_{m_1,\ldots,m_k}^{r,s}(K_1,\ldots,K_k)|_{\infty} \leq (\max\limits_{x\in K_1} \|x\|)^r \ ^j\Phi_{m_1,\ldots,m_k}^{0,0}(K_1,\ldots,K_k).
\]
This shows the existence of densities for the mixed Minkowski tensors, though only in the case $r=0$ the density can be formed with respect to the first argument.
Now we apply the iterated translative integral formula, Theorem \ref{translativeMink}. Introducing the index $l$ as the number of indices among $m_1,\ldots,m_k$ that are smaller than $n$, we can rearrange the summation to get
\begin{align*}
\E\left[\Phi_j^{r,s}(Z\cap W)\right]
& = \sum\limits_{k=1}^{\infty } \frac{(-1)^{k-1}}{k!}\gamma^{k}
\int\limits_{\cK_0^k} \sum\limits_{\substack{m_0,\ldots,m_k=j
m_0,\ldots,m_k=kn+j}}^n \\&
\qquad\qquad \ ^j\Phi_{m_0,\ldots,m_k}^{r,s}(W,K_1,\ldots,K_k)\Q^k(d(K_1,\ldots,K_k))\\
& = \sum\limits_{k=1}^{\infty } \frac{(-1)^{k-1}}{k!}\gamma^k \sum\limits_{m_0=j}^{n}\sum\limits_{\substack{m_1,\ldots,m_k=j\\ m_1+\ldots + m_k=kn+j-m_0}}^n \\
 &\qquad\qquad \int\limits_{\cK_0^k} \ ^j\Phi_{m_0,\ldots,m_k}^{r,s}(W,K_1,\ldots,K_k)\Q^k(d(K_1,\ldots,K_k))\\
& = \sum\limits_{k=1}^{\infty } \frac{(-1)^{k-1}}{k!}\gamma^k \sum\limits_{m_0=j}^n \sum\limits_{l=\mathbf{1}\{m_0>j\}}^{(m_0-j)\wedge k} \binom{k}{k-l} \sum\limits_{\substack{m_1,\ldots,m_l=j\\ m_1+\ldots m_l=ln+j-m_0}}^{n-1}\\
 & \qquad\qquad\int\limits_{\cK_0^k} \ ^j\Phi^{r,s}_{m_0,m_1,\ldots,m_l,\underbrace{n,\ldots,n}_{k-l \text{ times}}}(W,K_1,\ldots,K_k) \Q^k(d(K_1,\ldots,K_k))\\
& =\sum\limits_{m_0=j}^n \sum\limits_{l=\mathbf{1}\{m_0>j\}}^{m_0-j} \sum\limits_{m=\mathbf{1}\{m_0=j\}}^{\infty } \frac{(-1)^{m+l-1}}{(m+l)!} \binom{m+l}{m} \sum\limits_{\substack{m_1,\ldots,m_l=j\\ m_1+\ldots +m_l=ln+j-m_0}}^{n-1} \\
 &\qquad\qquad
\ ^j\overline{\Phi}_{m_0,m_1,\ldots,m_l}^{\,r,s}(W,X,\ldots,X) \overline{V}_n(X)^m\\
& = \sum\limits_{m_0=j+1}^n \sum\limits_{l=1}^{m_0-j} \frac{(-1)^{l-1}}{l!} \sum\limits_{\substack{m_1,\ldots,m_l=j\\ m_1+\ldots +m_l=ln+j-m_0}}^{n-1} \ ^j\overline{\Phi}_{m_0,\ldots,m_l}^{\,r,s}(W,X,\ldots,X) \mathrm{e}^{-\overline{V}_n(X)}\\
 & \quad +\overline{\Phi}_j^{\,r,s}(W)
\left[ 1-\mathrm{e}^{-\overline{V}_n(X)}
\right].
\end{align*}

A special situation occurs in the case $j=0$, namely the tensor $\Phi_0^{0,s}$, for even $s$, is proportional to a multiple of the metric tensor and for odd $s$ it is equal to the zero tensor.
By applying the representation \cite[(4.2.20)]{Schneider.1993} to the measure $\Lambda_0(K;\mathbb{R}^n\times \cdot)$ and by \cite[(24)]{Schneider.2002}, we have for $K\in\mathcal{K}$ that
\begin{align*}
\Phi_0^{0,s}(K) &= \frac{1}{s!}\frac{\omega_n}{\omega_{n+s}} \int\limits_{\mathbb{R}^n\times S^{n-1}}
u^s \Lambda_0(K;d(x,u))
 = \frac{1}{s!}\frac{1}{\omega_{n+s}}  V_0(K) \int\limits_{S^{n-1}} u^s \mathcal{H}^{n-1}(du)\\
& = \frac{2}{s! \omega_{s+1}}V_0(K) Q^{\frac{s}{2}} \mathbf{1}\{s\in 2\mathbb{N}_0\}.
\end{align*}
Furthermore, if we apply the translative integral formula Theorem \ref{translativeMink} for the tensor $\Phi_0^{0,0}=V_0$ and the tensor $\Phi_0^{0,s}$, we obtain by a comparison of the right-hand sides and by using  the fact that  the mixed tensors have different degrees of  homogeneity in the arguments $K_1,\ldots,K_k$ that
\[
\ ^0\Phi_{m_1,\ldots,m_k}^{0,s}(K_1,\ldots,K_k)= \mathbf{1}\{s\in 2\mathbb{N}_0\}\,\frac{2}{s! \omega_{s+1}} Q^{\frac{s}{2}}\,\ ^0V_{m_1,\ldots,m_k}(K_1,\ldots,K_k).
\]
\end{proof}
In the subsequent corollary we state the mean value formulas in the special case of three dimensions. The result in two dimensions has already been formulated in Corollary \ref{ExpOfSectWithWindowDimTwo}.
\begin{cor}\label{ExpOfSectWithWindowDimThree}
Let $Z$ be a stationary Boolean model in $\mathbb{R}^3$ with convex grains, let $W\in\mathcal{K}$ and $r,s\in\N_0$.
Then
\begin{align*}
 \mathbb{E}\left[\Phi_0^{r,s}(Z\cap W)\right]
& = \Phi_0^{r,s}(W)\left(1-\mathrm{e}^{-\overline{V}_3(X)}\right)+\mathrm{e}^{-\overline{V}_3(X)}
\Big[- \frac{1}{2}  \ ^0\overline{\Phi}^{\,r,s}_{2,2,2}(W,X,X)\\
& \quad + \ ^0\overline{\Phi}^{\,r,s}_{1,2}(W,X) + \ ^0\overline{\Phi}^{\,r,s}_{2,1}(W,X)\\
&\quad + \mathbf{1}\{s \in 2\mathbb{N}_0\}\, \frac{2}{s! \omega_{s+1}}\, Q^{\frac{s}{2}}   \Big(  \Phi_3^{\,r,0}(W)\overline{V}_0(X)\\
& \quad - \Phi_3^{r,0}(W)\ \ ^0\overline{V}_{1,2}(X,X) + \frac{1}{6} \Phi_3^{r,0}(W)  \ ^0\overline{V}_{2,2,2}(X,X,X) \Big) \Big];\\
 \mathbb{E}\left[\Phi_1^{r,s}(Z\cap W)\right]
& = \Phi_1^{r,s}(W)\left(1-\mathrm{e}^{-\overline{V}_3(X)}\right)+\mathrm{e}^{-\overline{V}_3(X)}\Big[\Phi_3^{r,0}(W)\overline{\Phi}_1^{\,0,s}(X)\\
& \quad + \ ^1\overline{\Phi}^{\,r,s}_{2,2}(W,X)- \frac{1}{2} \Phi_3^{r,0}(W) \ ^1\overline{\Phi}^{\,0,s}_{2,2}(X,X)\Big];\\
\mathbb{E}\left[\Phi_{2}^{r,s}(Z\cap W)\right]& = \Phi_{2}^{r,s}(W)\left(1-\mathrm{e}^{-\overline{V}_3(X)}\right)+\Phi_3^{r,0}(W) \overline{\Phi}_{2}^{\,0,s}(X) \mathrm{e}^{-\overline{V}_3(X)};\\
\mathbb{E}[ \Phi_3^{r,0}(Z\cap W)]&=
\Phi_3^{r,0}(W)\left( 1-\mathrm{e}^{-\overline{V}_3(X)}\right).
\end{align*}
\end{cor}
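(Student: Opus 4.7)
The plan is to derive the four identities by specializing the general results of Theorems \ref{ExpOfSectWithWindow} and \ref{ExpectFormulasGeneral} to $n=3$. The last two lines, corresponding to $j=n-1=2$ and $j=n=3$, are exactly the two statements of Theorem \ref{ExpOfSectWithWindow} in dimension three, so they are immediate. The first two lines, for $j=1$ and $j=0$, require one to enumerate explicitly the admissible index tuples $(m_0,m_1,\ldots,m_l)$ appearing in the nested sums of Theorem \ref{ExpectFormulasGeneral}, where each $m_i\in\{j,\ldots,n-1\}=\{j,1,2\}$ must satisfy the prescribed linear constraint on its sum.

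For $j=1$, the ``mixed'' block of Theorem \ref{ExpectFormulasGeneral} admits only $m_0=2$, and the constraint $m_1+\cdots+m_l=3l-1$ with $m_i\in\{1,2\}$ then forces $l=1$ and $m_1=2$; this contributes the single term $\,{}^1\overline{\Phi}^{\,r,s}_{2,2}(W,X)\,\mathrm{e}^{-\overline{V}_3(X)}$. In the trailing $\Phi_3^{r,0}(W)$-block, $l$ ranges over $\{1,2\}$: $l=1$ forces $m_1=1$ and contributes $\overline{\Phi}_1^{\,0,s}(X)$ with coefficient $+1$, while $l=2$ forces $m_1=m_2=2$ and contributes $-\tfrac12\,{}^1\overline{\Phi}^{\,0,s}_{2,2}(X,X)$. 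Collecting the three contributions yields the claimed formula.

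For $j=0$ the enumeration is analogous but longer. In the mixed block, $m_0=1$ admits only $(l,m_1)=(1,2)$, contributing $\,{}^0\overline{\Phi}^{\,r,s}_{1,2}(W,X)$, while $m_0=2$ admits $(l,m_1)=(1,1)$, contributing $\,{}^0\overline{\Phi}^{\,r,s}_{2,1}(W,X)$, and $(l,m_1,m_2)=(2,2,2)$, contributing $-\tfrac12\,{}^0\overline{\Phi}^{\,r,s}_{2,2,2}(W,X,X)$. In the trailing block, which carries the prefactor $\mathbf{1}\{s\in 2\N_0\}\,\tfrac{2}{s!\,\omega_{s+1}}Q^{s/2}\Phi_3^{r,0}(W)$, the admissible tuples are $(l;m_1,\ldots,m_l)=(1;0)$, $(2;(1,2))$, $(2;(2,1))$, and $(3;(2,2,2))$: the two $l=2$ tuples give the same value by the symmetry assertion of Theorem \ref{translativeMink}(i) and combine with the coefficient $\tfrac{-1}{2!}$ to produce $-\,{}^0\overline{V}_{1,2}(X,X)$, while $l=1$ and $l=3$ yield $\overline{V}_0(X)$ and $\tfrac16\,{}^0\overline{V}_{2,2,2}(X,X,X)$, respectively.

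No substantive difficulty arises beyond these two finite enumerations: absolute convergence of the Poisson-expectation series, the additive extension of all tensorial functionals to the convex ring, and the iterated translative integral formula have already been furnished by the preceding theorems. The only bookkeeping subtlety is that the metric-tensor prefactor $Q^{s/2}$ must remain attached solely to the scalar mixed-functional block, where property (ii) of Theorem \ref{translativeMink} first peels off $\Phi_3^{r,0}(W)$ and the remaining tensor factor is then reduced to a scalar via the identity $\Phi_0^{0,s}=\mathbf{1}\{s\in 2\N_0\}\,\tfrac{2}{s!\,\omega_{s+1}}Q^{s/2}V_0$ used at the end of the proof of Theorem \ref{ExpectFormulasGeneral}; this is the sole place where an indicator on the parity of $s$ enters.
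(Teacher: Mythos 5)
Your proposal is correct and follows exactly the route the paper intends: the corollary is the specialization of Theorems \ref{ExpOfSectWithWindow} and \ref{ExpectFormulasGeneral} to $n=3$, and your enumeration of the admissible index tuples, the combination of the $(1,2)$ and $(2,1)$ terms via the symmetry in Theorem \ref{translativeMink}(i) for $r=0$, and the handling of the $Q^{s/2}$ prefactor in the $j=0$ case all match what the paper's (unwritten, implicit) derivation requires.
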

We obtain the following density formulas in general dimension.

\begin{cor}\label{DensityArbDim}
Let $Z$ be  a stationary Boolean model  in $\mathbb{R}^n$ with convex grains and $s\in\N_0$. Then we obtain
\[\overline{\Phi}_{n-1}^{\,0,s}(Z)=\mathrm{e}^{-\overline{V}_n(X)}\overline{\Phi}_{n-1}^{\,0,s}(X)\]
and for $n\geq 3$ and $j=1,\ldots,n-2$ that
$$
\overline{\Phi}_j^{\,0,s}(Z)
= \mathrm{e}^{-\overline{V}_n(X)}\left(\overline{\Phi}_j^{\,0,s}(X)-\sum\limits_{l=2}^{n-j}
\frac{(-1)^l}{l!} \sum\limits_{\substack{m_1,\ldots,m_l=j+1\\ m_1+\ldots +m_l=(l-1)n+j}}^{n-1}\hspace{0.001cm} ^j\overline{\Phi}^{\,0,s}_{m_1,\ldots,m_l}(X,\ldots,X)
\right)
$$
and for $n\geq 2$ and $j=0$ that
\begin{align*}
\overline{\Phi}_0^{0,s}(Z)
&=\mathbf{1}\{s\in 2\N_0\}\, \mathrm{e}^{-\overline{V}_n(X)} \frac{2}{s!\omega_{s+1}}Q^{\frac{s}{2}}\\
&\quad\times \left(\overline{V}_0(X) -\sum\limits_{l=2}^{n} \frac{(-1)^{l}}{l!} \sum\limits_{\substack{m_1,\ldots,m_l=1\\ m_1+\ldots +m_l=(l-1)n}}^{n-1} \ ^0\overline{V}_{m_1,\ldots,m_l}(X,\ldots,X)
\right)\\
&=
\mathbf{1}\{s\in 2\N_0\}  \frac{2}{s!\omega_{s+1}}Q^{\frac{s}{2}} \overline{V}_0(Z).
\end{align*}
\end{cor}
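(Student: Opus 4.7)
The plan is to derive all three identities from the mean value formulas of Theorem \ref{ExpOfSectWithWindow} and Theorem \ref{ExpectFormulasGeneral} by substituting $W\mapsto \varrho W$ for a fixed convex body $W$ with $V_n(W)>0$, taking $r=0$, dividing by $V_n(\varrho W)=\varrho^n V_n(W)$ and letting $\varrho\to\infty$. The case $j=n-1$ is already contained in Corollary \ref{Density}, so the work concerns $0\le j\le n-2$.

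The decisive observation concerns the homogeneity behaviour in the window scale. By Theorem \ref{translativeMink}\,(iii), ${}^j\Phi^{0,s}_{m_0,m_1,\ldots,m_l}(\cdot,X,\ldots,X)$ is homogeneous of degree $m_0$ in its first slot (recall $r=0$), and $\Phi_j^{0,s}$ itself is of degree $j$. Since every $m_0$ appearing in the middle sum of Theorem \ref{ExpectFormulasGeneral} satisfies $m_0\le n-1$, and $j\le n-2<n$, all summands involving $\varrho W$ as the first argument of a (mixed) Minkowski tensor grow at most like $\varrho^{n-1}$ and vanish after division by $\varrho^n V_n(W)$. The only surviving contribution is the one with the factor $\Phi_n^{0,0}(\varrho W)=V_n(\varrho W)$, which contributes a quotient equal to $1$. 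Thus what remains is the final sum of Theorem \ref{ExpectFormulasGeneral}, yielding
\begin{equation*}
\overline{\Phi}_j^{\,0,s}(Z)=\mathrm{e}^{-\overline{V}_n(X)}\sum_{l=1}^{n-j}\frac{(-1)^{l-1}}{l!}\sum_{\substack{m_1,\ldots,m_l=j\\ m_1+\ldots+m_l=(l-1)n+j}}^{n-1}{}^j\overline{\Phi}^{\,0,s}_{m_1,\ldots,m_l}(X,\ldots,X),
\end{equation*}
together with the extra prefactor $\mathbf{1}\{s\in 2\N_0\}\,\frac{2}{s!\omega_{s+1}}Q^{s/2}$ in the case $j=0$, which is inherited from the corresponding term of Theorem \ref{ExpectFormulasGeneral}.

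It then remains to peel off the $l=1$ summand. For $l=1$ the constraint forces $m_1=j$, and Theorem \ref{translativeMink}\,(ii) gives ${}^j\Phi^{0,s}_j=\Phi_j^{0,s}$, so this term contributes $\overline{\Phi}_j^{\,0,s}(X)$ (respectively $\overline{V}_0(X)$ under the extra prefactor when $j=0$). Rewriting $(-1)^{l-1}/l!=-(-1)^l/l!$ for $l\ge 2$ then matches the sign pattern of the claim. The concluding identity $\overline{\Phi}_0^{\,0,s}(Z)=\mathbf{1}\{s\in 2\N_0\}\,\frac{2}{s!\omega_{s+1}}Q^{s/2}\,\overline{V}_0(Z)$ is obtained by comparing the bracket in the $j=0$ formula with the expansion of $\overline{V}_0(Z)$ provided by Theorem \ref{IntrinsicVolDensFormulas}; the two bracketed expressions coincide after canceling a common factor of $\mathrm{e}^{-\overline{V}_n(X)}$. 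I foresee no substantive obstacle: the argument is essentially a passage to the limit, and the only care required concerns the bookkeeping of homogeneity degrees and the reindexing needed to identify the $l=1$ term with the density of the typical grain.
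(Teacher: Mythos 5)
Your proposal is correct and follows essentially the same route as the paper: dilate the window, take $r=0$, divide by $V_n(\varrho W)$, and use the homogeneity degree $m_0\le n-1<n$ from Theorem \ref{translativeMink}\,(iii) to kill all terms except the one carrying the factor $\Phi_n^{0,0}(\varrho W)=V_n(\varrho W)$, then invoke Theorem \ref{IntrinsicVolDensFormulas} for the final identity. The only detail the paper leaves equally implicit is that for $l\ge 2$ the constraint $m_1+\ldots+m_l=(l-1)n+j$ with $m_i\le n-1$ automatically forces $m_i\ge j+1$, which reconciles the summation bounds.
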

\begin{proof}
Let $W\in \cK$ with $V_n(W)>0$. We consider for $\varrho>0$ in the formulas from Theorem \ref{ExpectFormulasGeneral} the dilated window $\varrho W$ instead of $W$ and divide by its volume $V_n(\varrho W)$. Due to the homogeneity properties of the mixed Minkowski tensors, the summands with $m_0<n$ vanish asymptotically as $\varrho$ goes to infinity. For the second relation we use Theorem \ref{IntrinsicVolDensFormulas}.
\end{proof}
The subsequent Corollary \ref{DensityDimTwoThree} states the density formulas in two and three dimensions. The cases $n=2$ and $j=1$ respectively $n=3$ and $j=2$ are already contained in Corollary \ref{Density} and therefore not displayed again.
\begin{cor}\label{DensityDimTwoThree}
Let $s\in\mathbb{N}_0$.
\begin{itemize}
\item [Then, for $n=2$ and $j=0$, we have]
\begin{align*}
 \overline{\Phi}_0^{\,0,s}(Z) &= \mathbf{1}\{s \in 2\mathbb{N}_0\}\,\frac{2}{s! \omega_{s+1}}\, Q^{\frac{s}{2}} \,\overline{V}_0(Z)\\
&=  \mathbf{1}\{s \in 2\mathbb{N}_0\}\,\frac{2}{s! \omega_{s+1}}\, Q^{\frac{s}{2}}
\left(\overline{V}_0(X)-\frac{1}{2} \ ^0\overline{V}_{1,1}(X,X)\right)\mathrm{e}^{-\overline{V}_2(X)} ;
\end{align*}
\item [for $n=3$ and $j=0$, we have]
\begin{align*}
\overline{\Phi}_0^{\,0,s}(Z)& =  \mathbf{1}\{s \in 2\mathbb{N}_0\}\,\frac{2}{s! \omega_{s+1}}\, Q^{\frac{s}{2}}\, \overline{V}_0(Z)\\
& = \mathbf{1}\{s \in 2\mathbb{N}_0\}\,\frac{2}{s! \omega_{s+1}}\, Q^{\frac{s}{2}}
\Big(\overline{V}_0(X)
- \ ^0\overline{V}_{1,2}(X,X)+\frac{1}{6} \ ^0\overline{V}_{2,2,2}(X,X,X)\Big) \mathrm{e}^{-\overline{V}_3(X)}  ;
\end{align*}
\item [for $n=3$ and $j=1$, we have]
\begin{align*}
& \overline{\Phi}_1^{\,0,s}(Z)=\Big(\overline{\Phi}_1^{\,0,s}(X) -\frac{1}{2} \ ^1\overline{\Phi}^{\,0,s}_{2,2}(X,X)\Big)\mathrm{e}^{-\overline{V}_3(X)}.
\end{align*}
\end{itemize}
\end{cor}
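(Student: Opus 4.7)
The plan is to derive all three identities as direct specializations of Corollary \ref{DensityArbDim}, combined with Theorem \ref{IntrinsicVolDensFormulas} for the two $j=0$ cases. No new integral-geometric machinery is needed; the work reduces to carefully enumerating the admissible index tuples $(m_1,\ldots,m_l)$ in each sum.

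First, for the case $n=3$, $j=1$, I would apply the middle formula of Corollary \ref{DensityArbDim}. The outer sum over $l$ runs from $2$ to $n-j=2$, so only $l=2$ contributes. The constraint then forces $m_1,m_2\in\{j+1,\ldots,n-1\}=\{2\}$ with $m_1+m_2=(l-1)n+j=4$, leaving the single tuple $(2,2)$. The factor $(-1)^l/l!=1/2$ yields the claimed term $\tfrac{1}{2}\,^1\overline{\Phi}^{\,0,s}_{2,2}(X,X)$.

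Next, for the two $j=0$ cases, I would use the last identity of Corollary \ref{DensityArbDim}, which already isolates the factor $\mathbf{1}\{s\in 2\N_0\}\frac{2}{s!\omega_{s+1}}Q^{s/2}$. The content then lies in recognizing
\[
\mathrm{e}^{-\overline{V}_n(X)}\left(\overline{V}_0(X) -\sum_{l=2}^{n}\frac{(-1)^l}{l!}\sum_{\substack{m_1,\ldots,m_l=1\\ m_1+\ldots+m_l=(l-1)n}}^{n-1}\,^0\overline{V}_{m_1,\ldots,m_l}(X,\ldots,X)\right)
\]
as precisely the expression $\overline{V}_0(Z)$ given by Theorem \ref{IntrinsicVolDensFormulas} in dimension $n$. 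For $n=2$ this enumeration is trivial: only $l=2$ contributes, with the unique tuple $(1,1)$ and coefficient $1/2$, yielding $\overline{V}_0(X)-\tfrac{1}{2}\,^0\overline{V}_{1,1}(X,X)$. For $n=3$ we get two groups: $l=2$ with tuples $(1,2)$ and $(2,1)$ (which, by the symmetry in part (i) of Theorem \ref{TransIntFormSupMeas}, are equal, so together they give $\,^0\overline{V}_{1,2}(X,X)$ after multiplying by $1/2$), and $l=3$ with the unique tuple $(2,2,2)$ and coefficient $-1/6$; the total matches $\overline{V}_0(X)-\,^0\overline{V}_{1,2}(X,X)+\tfrac{1}{6}\,^0\overline{V}_{2,2,2}(X,X,X)$.

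There is no conceptual obstacle; the only care point is the symmetry bookkeeping in the $n=3$, $l=2$ sum, where one must remember that the sum indices are ordered and that symmetry in the arguments collapses $(1,2)$ and $(2,1)$ to a single functional. Writing out the two displayed equalities in each $j=0$ case then amounts to applying this enumeration once to produce the first equality (via Theorem \ref{IntrinsicVolDensFormulas}) and a second time explicitly to produce the second equality (via Corollary \ref{DensityArbDim}).
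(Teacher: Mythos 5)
Your proposal is correct and matches the paper's intent: Corollary \ref{DensityDimTwoThree} is stated without proof precisely because it is the direct specialization of Corollary \ref{DensityArbDim} (together with Theorem \ref{IntrinsicVolDensFormulas} for the $j=0$ cases) that you carry out. Your enumeration of the index tuples, the sign bookkeeping, and the collapsing of $(1,2)$ and $(2,1)$ via the $r=0$ symmetry of the mixed functionals are all accurate.
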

\begin{remark}\label{EulerCaseNotThatInteresting}
 A comparison of the previous Corollary \ref{DensityArbDim} with Theorem \ref{IntrinsicVolDensFormulas} shows that in the case $j=0$ the Minkowski tensor densities do not contain more information than the scalar valued densities $\overline{V}_0(Z)$. Though we would like to point out that this is indeed not the case for the corresponding mean value formulas for finite section window $W$, compare Corollary \ref{ExpOfSectWithWindowDimTwo} respectively Corollary \ref{ExpOfSectWithWindowDimThree} in the case $j=0$. Namely, if for pairwise distinct  $\varrho_0,\ldots,\varrho_{n}>0$ the mean values
$\mathbb{E}\left[\Phi_0^{r,s}(Z\cap \varrho_k W)\right]$, for $ k=0,\ldots, n$,
are known, we can separate the summands of different homogeneity degree in the right-hand side of the corresponding equations by merely solving a system of linear equations. In particular, if additionally the density $\overline{V}_n(X)$ is known, we obtain the density $ \ ^0\overline{\Phi}^{\,r,s}_{1,1}(W,X)$ in the case $n=2$ and the density
$\ ^0\overline{\Phi}^{\,r,s}_{2,2,2}(W,X,X)$ in the case $n=3$.
\end{remark}
%

\begin{remark}\label{remPolyconvexGrains}
In the statement of the density formulas we restricted to stationary Boolean models with convex grains. In \cite{Weil.2001} density formulas for the mixed functionals of translative integral geometry are established for Boolean models with polyconvex grains satisfying the integrability condition \eqref{IntCondBM1}. Condition \eqref{IntCondBM1} is also sufficient for extending the proof of Theorem \ref{ExpectFormulasGeneral} to the setting of stationary Boolean models with polyconvex grains. We shall not go into details here. But observe that it can be shown that a stationary Boolean model with grain distribution $\mathbb{Q}$ concentrated on the convex ring and intensity $\gamma >0$, which satisfies the integrability condition \eqref{IntCondBM1}
is a standard random set. Furthermore, all necessary integrability properties of involved mixed Minkowski tensors with respect to multiple product measures of $\mathbb{Q}$ can be handled.
\end{remark}

\subsection{Densities of Isotropic Standard Random Sets}
In this section we consider an isotropic standard random set $\tilde{Z}$. We shall see that in this case the densities of the Minkowski tensors are just constant multiples of the densities of the intrinsic volumes.

\begin{prop}\label{isotropicStandardRandSet}
Let $\tilde{Z}$ be an isotropic standard random set, $j\in \{0,\ldots,n-1\}$ and $s\in\mathbb{N}_0$. Then
\[ \overline{\Phi}_j^{\,0,s}(\tilde{Z})=\mathbf{1}\{s\in 2\N_0\}\tilde{\alpha}_{n,j,s} Q^{\frac{s}{2}} \overline{V}_j(\tilde{Z}),
\]
where
\[\tilde{\alpha}_{n,j,s}:=\frac{1}{(4\pi)^{\frac{s}{2}}(\frac{s}{2})!} \frac{\Gamma(\frac{n-j+s}{2})\Gamma(\frac{n}{2})}{\Gamma(\frac{n+s}{2})\Gamma(\frac{n-j}{2})}.\]
\end{prop}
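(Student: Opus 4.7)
The plan is to exploit isotropy by averaging over $SO_n$ and invoking the classical invariant theory of the orthogonal group: any symmetric tensor of rank $s$ on $\mathbb{R}^n$ that is $SO_n$-invariant is a scalar multiple of $Q^{s/2}$, and vanishes for odd $s$. This reduces the proof to identifying a single scalar constant.

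First I would show that the tensor $T_\varrho := \mathbb{E}[\Phi_j^{0,s}(\tilde{Z}\cap\varrho B^n)]$ is $SO_n$-invariant for every $\varrho>0$. Given $R\in SO_n$, the rotation covariance $\Phi_j^{0,s}(RL)=R\,\Phi_j^{0,s}(L)$ for polyconvex $L$, combined with $R(\tilde{Z}\cap\varrho B^n)=R\tilde{Z}\cap\varrho B^n$ (since $RB^n=B^n$) and the isotropy $R\tilde{Z}\stackrel{d}{=}\tilde{Z}$, yields $T_\varrho=\mathbb{E}[\Phi_j^{0,s}(R\tilde{Z}\cap\varrho B^n)]=\mathbb{E}[R\,\Phi_j^{0,s}(\tilde{Z}\cap\varrho B^n)]=RT_\varrho$. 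Dividing by $V_n(\varrho B^n)$ and passing to the limit (which is independent of the window by the discussion at the end of Section~\ref{SectionDefDensities}) shows that $\overline{\Phi}_j^{\,0,s}(\tilde{Z})$ is $SO_n$-invariant, hence of the form $c_{n,j,s}\,Q^{s/2}$ (and zero for odd $s$).

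To identify $c_{n,j,s}$, I would average the pointwise identity for $\Phi_j^{0,s}$ over rotations. For a convex body $K$, the rotation covariance of the support measures and Fubini give
$$\int_{SO_n}\Phi_j^{0,s}(RK)\,\nu(dR)=\frac{1}{s!}\frac{\omega_{n-j}}{\omega_{n-j+s}}\int_{\Sigma}\left(\int_{SO_n}(Ru)^s\,\nu(dR)\right)\Lambda_j(K;d(x,u)).$$
The inner tensor $M_s:=\int_{SO_n}(Ru)^s\,\nu(dR)$ is $SO_n$-invariant, and because $Ru$ is uniformly distributed on $S^{n-1}$ for any fixed unit $u$, it is independent of $u$; hence $M_s=\mathbf{1}\{s\in 2\N_0\}\,\beta_s\,Q^{s/2}$ for some scalar $\beta_s$. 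Extending by additivity to polyconvex sets, applying the identity to $K=\tilde{Z}\cap\varrho B^n$, taking expectation (the left-hand side becomes $\int_{SO_n}RT_\varrho\,\nu(dR)=T_\varrho$ by rotation covariance and the preceding step), dividing by $V_n(\varrho B^n)$, and letting $\varrho\to\infty$ produces
$$\overline{\Phi}_j^{\,0,s}(\tilde{Z})=\mathbf{1}\{s\in 2\N_0\}\,\frac{\beta_s}{s!}\frac{\omega_{n-j}}{\omega_{n-j+s}}\,Q^{s/2}\,\overline{V}_j(\tilde{Z}).$$

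It remains to compute $\beta_s$ and match the prefactor with $\tilde{\alpha}_{n,j,s}$. Evaluating $M_s=\beta_s\,Q^{s/2}$ at $(e_1,\ldots,e_1)$ and using that $R\mapsto Re_1$ pushes $\nu$ forward to the normalized spherical measure $\sigma$ on $S^{n-1}$ yields $\beta_s=\int_{S^{n-1}}v_1^s\,\sigma(dv)$; a polar decomposition reduces this to a Beta integral, and using $\omega_k=2\pi^{k/2}/\Gamma(k/2)$ together with the identity $\Gamma((s+1)/2)=s!\sqrt{\pi}/(4^{s/2}(s/2)!)$ for even $s$ produces exactly $\tilde{\alpha}_{n,j,s}$. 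The main obstacle is this Gamma-function bookkeeping; the conceptual steps (invariance plus rotational averaging inserted into the translative framework already developed) are routine.
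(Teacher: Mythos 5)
Your proposal is correct and follows essentially the same route as the paper: both exploit isotropy by averaging over $SO_n$, reduce to the spherical moment $\int_{S^{n-1}}u^s\,\mathcal{H}^{n-1}(du)$, which is a multiple of $Q^{s/2}$ and vanishes for odd $s$, and finish with the same Gamma-function bookkeeping via Legendre's duplication formula. The only (cosmetic) difference is that the paper averages the marginal measure $\Lambda_j(\vartheta\tilde{Z}\cap B^n;\mathbb{R}^n\times\cdot)$ over rotations to obtain an invariant measure on $S^{n-1}$, whereas you average the tensor-valued integrand $(Ru)^s$ directly; the resulting constant is identical.
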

\begin{proof}
First, we have
\begin{align*}
 \mathbb{E}\left[\Phi_j^{0,s}(\tilde{Z}\cap B^n)\right]
& = \int\limits_{SO_n}\mathbb{E}\left[\Phi_j^{0,s}(\vartheta \tilde{Z}\cap B^n)\right]\nu(d\vartheta)\\
& = \mathbb{E}\left[\;\int\limits_{SO_n}\frac{1}{s!}\frac{\omega_{n-j}}{\omega_{n-j+s}} \int\limits_{\Sigma}u^s\Lambda_j\left(\vartheta \tilde{Z}\cap B^n;d(x,u)\right)\nu(d\vartheta)\right].
\end{align*}
Now we define a measure $\mu$ on $S^{n-1}$ by
\[
\mu(A):= \int\limits_{SO_n}\Lambda_j\left(\vartheta \tilde{Z} \cap B^n; \mathbb{R}^n\times A\right)\nu(d\vartheta)
\]
for $A\in\mathcal{B}(S^{n-1})$.
The measure $\mu$ is $SO_n$-invariant because of the rotation covariance of $\Lambda_j$ and the invariance properties of the Haar measure on $S^{n-1}$. For $\vartheta \in SO_n$ and $A\in\mathcal{B}(S^{n-1})$ we obtain
$\mu(\vartheta A) = \mu(A)$.
Hence, $\mu$ is a multiple of the Haar measure on $S^{n-1}$ and $\mu\left(S^{n-1}\right)= V_j\left(\tilde{Z}\cap B^n\right)$.
We deduce
\[
\mu = \frac{V_j\left(\tilde{Z}\cap B^n\right)}{n\kappa_n}\,\mathcal{H}^{n-1}\llcorner S^{n-1}.
\]
Now it follows that
\begin{align*}
 \mathbb{E}\left[\Phi_j^{0,s}\left(\tilde{Z}\cap B^n\right)\right]
& =  \frac{1}{s!}\mathbb{E}\left[\frac{\omega_{n-j}}{\omega_{n-j+s}}
\int\limits_{S^{n-1}}u^s\mu(du)\right]\\
& = \mathbb{E}\left[V_j\left( \tilde{Z}\cap B^n\right)\right]\frac{1}{s!}\frac{\omega_{n-j}}{\omega_{n-j+s}}\frac{1}{n\kappa_n}\int\limits_{S^{n-1}}u^s
\mathcal{H}^{n-1}(du)\\
& =\mathbf{1}\{s\in 2\N_0\} \mathbb{E}\left[V_j\left( \tilde{Z}\cap B^n\right)\right]\frac{1}{s!}\frac{\omega_{n-j}}{\omega_{n-j+s}}\frac{1}{n\kappa_n} 2\frac{\omega_{s+n}}{\omega_{s+1}} Q^{\frac{s}{2}},
\end{align*}
where the last equality follows by \cite[(24)]{Schneider.2002}.
Therefore, we have
\begin{align*}
 \overline{\Phi}_j^{\,0,s}(\tilde{Z})
& = \lim\limits_{r\rightarrow\infty}\frac{\mathbb{E}[\Phi_j^{0,s}(\tilde{Z}\cap rB^n)}{r^n\kappa_n}\\
& =\mathbf{1}\{s\in 2\N_0\} \overline{V}_j(\tilde{Z}) \frac{1}{n\kappa_n}\frac{1}{s!}\frac{\omega_{n-j}}{\omega_{n-j+s}} 2 \frac{\omega_{s+n}}{\omega_{s+1}} Q^{\frac{s}{2}} \\
& = \mathbf{1}\{s\in 2\N_0\} \frac{\Gamma(\frac{n}{2})\Gamma(\frac{n-j+s}{2})\Gamma(\frac{s+1}{2})}{\pi^{\frac{s+1}{2}}\Gamma(s+1) \Gamma(\frac{n-j}{2})\Gamma(\frac{n+s}{2})} Q^{\frac{s}{2}} \overline{V}_j(\tilde{Z})\\
& =\mathbf{1}\{s\in 2\N_0\} \tilde{\alpha}_{n,j,s} Q^{\frac{s}{2}} \overline{V}_j(\tilde{Z}),
\end{align*}
where the last line follows by Legendre's relation; see \cite{Artin1}.
\end{proof}
\begin{remark}
If the grain distribution $\Q$ is rotation invariant the Boolean model $Z$ is isotropic.
Then Proposition \ref{isotropicStandardRandSet} implies, for $j\in \{0,\ldots,n-1\}$ and $s\in\mathbb{N}_0$, that
\[
\overline{\Phi}_j^{\,0,s}(Z)
=\mathbf{1}\{s\in 2\N_0\}Q^{\frac{s}{2}}\tilde{\alpha}_{n,j,s}\overline{V}_j(Z).
\]
This connection may offer a possibility to test the isotropy of a Boolean model.
\end{remark}

\section{Examples}\label{SectionExamples}
\subsection{Planar Parametric Non-isotropic Boolean Model}
In this subsection we apply the formulas from Corollary \ref{Density} and Corollary \ref{DensityDimTwoThree} to a parametric class of planar Boolean models studied in \cite[Sect. 2.2]{SchroderTurk.2011} with ellipse particles.
We shall see that for this easy parametric model the obtained results allow to extract useful information from observations of the Boolean model.

 For $\alpha\in[0,\infty], \gamma>0$ and $E\in \cK_0$, let $Z_{\alpha,\gamma,E}$ be a stationary Boolean model with intensity $\gamma$ and the grains obtained by rotating $E$ by a random angle $\theta\in[0,2\pi)$.
For $\alpha<\infty$ the random angle $\theta$ has the probability density
\[
f_{\alpha}(\theta)=c(\alpha)\; |\cos\theta|^{\alpha}, \quad \text{ for } \theta\in[0,2\pi),
\]
with
\[
c(\alpha):=\frac{\Gamma(1+\frac{\alpha}{2})}{2\sqrt{\pi} \Gamma(\frac{\alpha+1}{2})},
\]
that is, the grain distribution of $Z_{\alpha,\gamma,E}$ is
\[
\mathbb{Q}(\cdot)=  \int\limits_{0}^{2\pi} \mathbf{1}\{\vartheta(\theta) E \in \cdot\}  f_\alpha(\theta)\,d\theta,
\]
where $\vartheta(\theta)\in SO(2)$ is the rotation by the angle $\theta$. The grain distribution of $Z_{\infty,\gamma,E}$ is $\mathbb{Q}=\delta_E$.
In the following, we call $E$ the base grain and $\alpha$ the orientation parameter of the Boolean model.
We specify in this particular case the formulas for the densities obtained in Corollary \ref{Density} and Corollary \ref{DensityDimTwoThree}. For this we have to determine $\overline{\Phi}_1^{\,0,s}(X), s\in\N_0, \overline{V}_0(X)$ and $\ ^0\overline{V}_{1,1}(X,X)$.
Starting with the density of the surface tensor we obtain for $s\in\N_0$ that
\begin{align*}
\overline{\Phi}_1^{\,0,s}(X) &= \gamma \int\limits_{\mathcal{K}_0}\Phi_1^{\,0,s}(K)\mathbb{Q}(dK) = \gamma \int\limits_{0}^{2\pi}\Phi_1^{\,0,s}(\vartheta(\theta) E)f_\alpha(\theta)d\theta.
\end{align*}
In the following we identify a $p$-tensor with an element of $\mathbb{R}^{n^p}$  in the usual way.
We obtain by \cite[(8)]{Mickel.2012}, for $s\in\mathbb{N}_0$ and $i_1,\ldots,i_s\in\{1,2\}$, that
\begin{align*}
\left(\Phi_1^{0,s}(\vartheta(\theta) E)\right)_{i_1,\ldots,i_s}& =\sum\limits_{j_1,\ldots,j_s=1}^2 (\vartheta(\theta))_{i_1,j_1}\cdots(\vartheta(\theta))_{i_s,j_s} \left(\Phi_1^{0,s}(E)\right)_{j_1,\ldots,j_s}
\end{align*}
and therefore, for $0\leq l\leq s$, that
\begin{align}
&\left(\overline{\Phi}_1^{\;0,s}(X)\right)_{\underbrace{1,\ldots,1}_
{l \text{ times}},\underbrace{2,\ldots,2}_{s-l \text{ times}}}\nn\\
&= \gamma \sum\limits_{j_1,\ldots,j_s=1}^2\,\int\limits_{0}^{2\pi}
(\vartheta(\theta))_{1,j_1}\cdots(\vartheta(\theta))_{1,j_l} \vartheta(\theta))_{2,j_{l+1}}\cdots(\vartheta(\theta))_{2,j_s} \,f_\alpha\,(\theta) d\theta\nn\\
&\quad \times \left(\Phi_1^{0,s}(E)\right)_{j_1,\ldots,j_s}\nn\\
& =
\mathbf{1}\{s \text{ even}\}\,\gamma \sum\limits_{\substack{j=0\\j+l \text{ even}}}^s \sum\limits_{k=0\vee (j-s+l)}^{j\wedge l}(-1)^{l-k}\binom{l}{k}\binom{s-l}{j-k} \prod\limits_{m=1}^{\frac{l+j-2k}{2}}(2m-1)\nn\\
 & \quad \times \prod\limits_{m=1}^{\frac{s-l-j+2k}{2}} (\alpha+2m-1) \prod\limits_{m=1}^{s/2}(\alpha+2m)^{-1}  \left( \Phi_1^{0,s}(E) \right)_{\underbrace{1,\ldots,1}_{j \text{ times}},\underbrace{2,\ldots,2}_{s-j \text{ times}}}, \label{MinkTensDensNonIso}
\end{align}
since we obtain for the integral prefactor in the second line of the above equation for $s_1,\ldots,s_4, s\in\mathbb{N}_0$ with $s_1+\ldots +s_4=s$ that
\begin{align}
&\int\limits_{0}^{2\pi} (\vartheta(\theta))^{s_1}_{1,1}(\vartheta(\theta))^{s_2}_{1,2} (\vartheta(\theta))^{s_3}_{2,1} (\vartheta(\theta))^{s_4}_{2,2} f_\alpha\,(\theta) d\theta\nn\\
& =
c(\alpha) \int\limits_{0}^{2\pi}  (\cos\theta)^{s_1+s_4} |\cos \theta|^{\alpha}(-\sin\theta)^{s_2}(\sin\theta)^{s_3}  d\theta\nn\\
&
=\begin{cases}\begin{array}{lll}
0,& \text{if } s_1+s_4 \text{ or } s_2+s_3 \text{ is odd},\\
(-1)^{s_2}\frac{\prod\limits_{m=1}^{(s_2+s_3)/2}\left(2m-1\right) \prod\limits_{m=1}^{(s_1+s_4)/2}\left(\alpha+2m-1\right)}{ \prod\limits_{m=1}^{s/2} (\alpha+2m)}
,& \text{otherwise},\nn\\
\end{array}\end{cases}
\end{align}
by the symmetry properties of sine and cosine and since
\begin{equation*}\label{eqGamma}
\int\limits_0^{\frac{\pi}{2}}(\sin\varphi)^a (\cos\varphi)^b d\varphi = \frac{1}{2}\frac{\Gamma(\frac{a+1}{2})\Gamma(\frac{b+1}{2})} {\Gamma(\frac{a+b+2}{2})},
\end{equation*}
for $a,b>-1$; see \cite[(5.6)]{Artin1} or \cite[(12.42)]{Whittaker.1996}. In the case $s=2$ equation
 \eqref{MinkTensDensNonIso} simplifies to

\begin{equation}\label{DensNonIsoBMs=2}
\overline{\Phi}_1^{\,0,2}(X)=
\frac{\gamma}{\alpha+2}\scalebox{0.7}{$\begin{pmatrix} (\alpha+1)\left(\Phi_1^{0,2}(E)\right)_{1,1}+ \left(\Phi_1^{0,2}(E)\right)_{2,2} & \;
\alpha \left(\Phi_1^{0,2}(E)\right)_{1,2}\\
&  \\
\alpha \left(\Phi_1^{0,2}(E)\right)_{1,2} & \;  \left(\Phi_1^{0,2}(E)\right)_{1,1}+(\alpha+1) \left(\Phi_1^{0,2}(E)\right)_{2,2}\\
\end{pmatrix}.$}
\end{equation}

On the other hand, we obtain for the mixed density
\begin{align}
 ^0\overline{V}_{1,1}(X,X)
 &=\gamma^2 \int\limits_{0}^{2\pi}\int\limits_{0}^{2\pi}
\ ^0V_{1,1}(\vartheta(\theta_1) E, \vartheta(\theta_2)E)f_\alpha(\theta_1) f_\alpha(\theta_2)d\theta_1 d\theta_2\nn\\
& = \gamma^2 \int\limits_{0}^{2\pi}\int\limits_{0}^{2\pi}
\ ^0V_{1,1}(\vartheta(\theta_1-\theta_2)E, E)f_\alpha(\theta_1) f_\alpha(\theta_2)d\theta_1 d\theta_2\nn\\
&  = \gamma^2 c(\alpha)^2 \int\limits_{0}^{2\pi}\int\limits_{0}^{2\pi}
\ ^0V_{1,1}(\vartheta(\theta_1)E, E)\, |\cos(\theta_1+\theta_2)|^{\alpha} |\cos(\theta_2)|^{\alpha}\, d\theta_1 d\theta_2,\label{mixedDensityGeneral}
\end{align}
where we have used that $\ ^0V_{1,1}$ is invariant with respect to simultaneous rotations of its arguments and that the integrand is $2\pi$-periodic with respect to $\theta_1$.

Furthermore it follows from \cite[Cor. 9.2]{Weil.2001b}
and the rotation covariance of the support measures  for $\theta\in[0,2\pi]$ that
\begin{align}
\ ^0V_{1,1}(\vartheta(\theta)E, E)\nn
& = \frac{2}{\pi} \int\limits_{\mathbb{R}^2\times S^1}\int\limits_{\mathbb{R}^2\times S^1} \alpha\left(\vartheta(\theta)u_1,u_2\right) \sin\left(\alpha\left(\vartheta(\theta)u_1,u_2\right)\right)\nn\\
&\quad\quad\quad\quad\quad\quad\quad\quad\Lambda_1(E;d(x_1,u_1))
\Lambda_1(E;d(x_2,u_2)),\label{mixedVolumeGeneral}
\end{align}
where $\alpha(u_1,u_2)\in[0,\pi]$ denotes the smaller angle between $u_1,u_2\in S^1.$

\begin{remark}
Assume that the above parametric Boolean model is observed and the densities
\[
\overline{\Phi}_1^{\,0,2}(Z_{\alpha,\gamma,E}) \text{ and } \overline{V}_2(Z_{\alpha,\gamma,E})
\]
are therefore known. Is it possible to obtain the parameters $\alpha$ and $\gamma$ from the above densities of the Boolean model?
To see that this is indeed the case, we use Corollary \ref{Density} to obtain
\begin{equation}\label{ParBMSurfaceTensRel}
\overline{\Phi}_1^{\,0,2}(Z_{\alpha,\gamma,E}) = \overline{\Phi}_1^{\,0,2}(X)\, \mathrm{e}^{-\overline{V}_2(X)}
\end{equation}
and (by Theorem \ref{IntrinsicVolDensFormulas})
\begin{equation}\label{volumeFrac}
\overline{V}_2(Z_{\alpha,\gamma,E}) = 1 - \mathrm{e}^{-\overline{V}_2(X)} = 1- \mathrm{e}^{-\gamma V_2(E)}.
\end{equation}
Thus, \eqref{volumeFrac} yields
\begin{equation}\label{FormulaGamma}
\gamma = -\frac{\ln\left(1- \overline{V}_2(Z_{\alpha,\gamma,E})\right)}{V_2(E)}
\end{equation}
and, by \eqref{DensNonIsoBMs=2} and \eqref{ParBMSurfaceTensRel},
\begin{equation}\label{FormulaAlpha}
\alpha = \frac{\gamma\left( \left(\Phi_{1}^{0,2}(E)\right)_{1,1}+\left(\Phi_{1}^{0,2}(E)\right)_{2,2}\right)-2\,\mathrm{e}^{\gamma V_2(E)} \left(\overline{\Phi}_1^{\,0,2}(Z_{\alpha,\gamma,E})\right)_{1,1}}{\mathrm{e}^{\gamma V_2(E)}\left(\overline{\Phi}_1^{\,0,2}(Z_{\alpha,\gamma,E})\right)_{1,1}- \gamma \left(\Phi_{1}^{0,2}(E)\right)_{1,1}}.
\end{equation}
In Subsection \ref{EstOfModelParameters} we use equation \eqref{FormulaGamma} and \eqref{FormulaAlpha} to define estimators for the intensity $\gamma$ and the orientation parameter $\alpha$ and test their performance in a simulation study.
\end{remark}
\begin{remark}
In \cite[Sect. 2.2]{SchroderTurk.2011} the Boolean model $Z_{\alpha,\gamma,E}$ with the base grain $E$ being an ellipse is considered. Pixelized realizations of $[0,1]^2\cap Z_{\alpha,\gamma,E}$ are used as input for testing the performance of real-valued characteristics derived from Minkowski tensors. More precisely, a so-called anisotropy index $\beta_1^{\ast\,0,2}$ is introduced, which is defined by
\[
\beta_1^{\ast\,0,2}:=
\frac{\left(\Phi_1^{0,2}\left(Z_{\alpha,\gamma,E}; [0,1]^2\right)\right)_{1,1}}{\left(\Phi_1^{0,2}\left(Z_{\alpha,\gamma,E};[0,1]^2\right)\right)_{2,2}},
\]
where
\[
\Phi_1^{0,2}\left(Z_{\alpha,\gamma,E};[0,1]^2\right)= \frac{1}{2}\frac{\omega_{1}}{\omega_{3}} \int\limits_{\mathbb{R}^2\times S^1} \mathbf{1}_{[0,1]^2}(x)\;u^2 \Lambda_1(Z_{\alpha,\gamma,E};d(x,u)).
\]
In \cite[2.2]{SchroderTurk.2011}, $\langle\beta_1^{\ast\, 0,2} \rangle$
denotes the mean value obtained by averaging $\beta_1^{\ast\, 0,2}$ over several realizations of $Z_{\alpha,\gamma,E}$ and it is observed that for $\alpha=0$, that is, in the isotropic case,
we have $\langle\beta_1^{\ast\, 0,2} \rangle=1$.

Furthermore, $\langle\beta_1^{\ast\, 0,2} \rangle$
seems to be constant as function of the volume fraction\\ $\overline{V}_2(Z_{\alpha,\gamma,E})$.
Unfortunately, we are right now not able to explain these observations. But if instead of taking the mean value of $\beta_1^{\ast\, 0,2}$, the mean value is taken separately for the denominator and nominator, that is, if
\begin{equation}\label{fracMeanvalues}
\frac{\left\langle \left(\Phi_1^{0,2}\left(Z_{\alpha,\gamma,E}; [0,1]^2\right)\right)_{1,1} \right\rangle}{\left\langle\left(\Phi_1^{0,2}\left(Z_{\alpha,\gamma,E};[0,1]^2\right)\right)_{2,2}\right\rangle}
\end{equation}
is considered, our previous results can be used to obtain some insight.
The quantity \eqref{fracMeanvalues} can be considered as an estimator of
\begin{equation}\label{fracExpectedValues}
\frac{\mathbb{E}\left[\left(\Phi_1^{0,2}\left(Z_{\alpha,\gamma,E}; [0,1]^2\right)\right)_{1,1}\right]}{\mathbb{E}\left[\left(\Phi_1^{0,2}\left(Z_{\alpha,\gamma,E};[0,1]^2\right)\right)_{2,2}\right]}.
\end{equation}
Using the fact that the curvature measures are locally determined, Theorem \ref{TransIntFormSupMeas}, {\rm(ix)}, can be shown to hold also for the additive extensions, and by proceeding as in the proof of Theorem \ref{ExpOfSectWithWindow} and by the special case $j=1, n=2$ of Theorem \ref{TransIntFormSupMeas}, we obtain that
\begin{align*}
\mathbb{E}\left[\Phi_1^{0,2}\left(Z_{\alpha,\gamma,E};[0,1]^2\right)\right]
&= \mathbb{E}\left[\Phi_1^{0,2}\left(Z_{\alpha,\gamma,E}\cap 2B^2;[0,1]^2\right)\right]
 = \overline{\Phi}_1^{\,0,2}(X)\, \mathrm{e}^{-\overline{V}_2(X)}\\
&= \overline{\Phi}_1^{\,0,2}(X)\, \mathrm{e}^{-\gamma V_2(E)}.
\end{align*}
Therefore, by \eqref{DensNonIsoBMs=2}, we get
\begin{align*}
\frac{\mathbb{E}\left[\left(\Phi_1^{0,2}\left(Z_{\alpha,\gamma,E}; [0,1]^2\right)\right)_{1,1}\right]}{\mathbb{E}\left[\left(\Phi_1^{0,2}\left(Z_{\alpha,\gamma,E};[0,1]^2\right)\right)_{2,2}\right]}
 = \frac{\left(\overline{\Phi}_1^{\,0,2}(X)\right)_{1,1}}{\left(\overline{\Phi}_1^{\,0,2}(X)\right)_{2,2}}
= \frac{(\alpha+1)\left( \Phi_1^{0,2}(E)\right)_{1,1}+\left( \Phi_{1}^{0,2}(E)\right)_{2,2}}{\left( \Phi_1^{0,2}(E)\right)_{1,1}+(\alpha+1)\left( \Phi_1^{0,2}(E)\right)_{2,2}}.
\end{align*}
Hence, in the isotropic case ($\alpha =0$) the ratio in \eqref{fracExpectedValues} is  equal to $1$. Moreover, the quantity \eqref{fracExpectedValues} is always independent of the volume fraction $\overline{V}_2(Z_{\alpha,\gamma,E})$, since the volume fraction depends by \eqref{volumeFrac} only on the intensity $\gamma$ and not on the parameter $\alpha$. It is interesting and should be investigated further why these properties are also observed for the quantity $ \langle \beta_1^{\ast\,0,2}\rangle$ in \cite[Sect. 2.2]{SchroderTurk.2011}.
\end{remark}

\begin{remark}\label{RemarkSmoothBaseGrain}
For a smooth base grain $E\in C^2_+$ we obtain special formulas since the support measure $\Lambda_1$ can be represented as an integral over the unit sphere weighted with the curvature radius of $E$ (see \eqref{ReprLambda1AsIntOverUnitSphere}) or as an integral over the boundary of $E$ (see \eqref{ReprLambda1AsIntOverBoundary}). We use the abbreviation
\begin{equation}\label{NotationUofPhi}
u(\alpha):=\begin{pmatrix}\cos(\alpha)\\ \sin(\alpha)\end{pmatrix}, \quad \alpha\in \R,
\end{equation}
and the notation $r(E,u)$ for the radius of curvature of $E$ at a point $x\in \partial E$ with outer normal $u\in S^1$. The representations \cite[(4.2.19) and (4.2.20)]{Schneider.1993} of the curvature respectively area measure for smooth convex bodies lead to
\begin{equation}\label{ReprLambda1AsIntOverUnitSphere}
\Lambda_1(E;\cdot)= \frac{1}{2}\int\limits_{S^{1}}\mathbf{1}\{(x(u),u)\in \cdot\}r(E,u)\mathcal{H}^1(du),
\end{equation}
respectively
\begin{equation}\label{ReprLambda1AsIntOverBoundary}
\Lambda_1(E;\cdot)= \frac{1}{2}\int\limits_{\partial E}\mathbf{1}\{(x,u(x))\in \cdot\}\mathcal{H}^1(dx),
\end{equation}
where for $u\in S^1$ we denote by $x(u)$ the unique boundary point in $\partial E$ with outer normal $u$ and, for $x\in \partial E$, we denote by $u(x)$ the outer normal of $E$ at $x$.
If a parametrization of $\partial E$ is known, \eqref{ReprLambda1AsIntOverBoundary} can be used to determine $\Phi_1^{0,s}(E)$ for $s\in\N_0$, and via equation \eqref{MinkTensDensNonIso} then also $\overline{\Phi}_1^{0,s}(X)$. On the other hand, \eqref{ReprLambda1AsIntOverUnitSphere} can be used to determine $\ ^0 V_{1,1}(\vartheta(\theta)E,E)$, and then via \eqref{mixedDensityGeneral} also $\ ^0\overline{V}_{1,1}(X,X)$; see \eqref{MixedDensSmoothGrain}. In fact, observe that it follows from \eqref{mixedVolumeGeneral} that
\begin{align*}
 ^0V_{1,1}(\vartheta(\theta)E, E)
& = \frac{1}{2\pi} \int\limits_0^{2\pi}\int\limits_0^{2\pi} \alpha\left(u(\beta_1-\beta_2+\theta),u(0)\right) |\sin(\beta_1-\beta_2+\theta)|\\
&\quad\times r(E,u(\beta_1)) r(E,u(\beta_2))
d\beta_1 d\beta_2\nn\\
& = \frac{1}{2\pi} \int\limits_0^{2\pi}\int\limits_0^{2\pi} \big(\mathbf{1}\{\beta_1\in[0,\pi]\} \beta_1 \sin(\beta_1) - \mathbf{1}\{\beta_1\in(\pi,2\pi]\}(2\pi-\beta_1) \sin(\beta_1)\big) \\
&\quad \times r(E, u(\beta_1+\beta_2-\theta)) r(E, u(\beta_2))
d\beta_1 d\beta_2\nn\\
& = \frac{1}{2\pi} \int\limits_0^{2\pi}\int\limits_0^{\pi} \beta_1 \sin(\beta_1) \left[r(E, u(\beta_1+\beta_2-\theta))+ r(E, u(-\beta_1+\beta_2-\theta))  \right]\\
&\quad\times r(E,u(\beta_2))d\beta_1 d\beta_2,
\end{align*}
and hence
\begin{align}
 ^0\overline{V}_{1,1}(X,X)
& = \gamma^2 c(\alpha)^2 \int\limits_{0}^{2\pi}\int\limits_{0}^{2\pi}
\ ^0V_{1,1}(\vartheta(\theta_1)E, E)\, |\cos(\theta_1+\theta_2)|^{\alpha} |\cos(\theta_2)|^{\alpha}\, d\theta_1 d\theta_2\nn\\
& = \gamma^2 c(\alpha)^2 \int\limits_{0}^{2\pi}\int\limits_{0}^{2\pi}
 \frac{1}{2\pi} \int\limits_0^{2\pi}\int\limits_0^{\pi} \beta_1 \sin(\beta_1) r(E, u(\beta_2))\big[r(E, u(\beta_1+\beta_2-\theta_1))\nn\\
& \quad + r(E, u(-\beta_1+\beta_2-\theta_1))  \big]
d\beta_1 d\beta_2 \, |\cos(\theta_1+\theta_2)|^{\alpha} |\cos(\theta_2)|^{\alpha}\, d\theta_1 d\theta_2\nn\\
& = \frac{\gamma^2 c(\alpha)^2}{2\pi} \int\limits_{0}^{2\pi}\int\limits_{0}^{2\pi}
\int\limits_0^{2\pi}\int\limits_0^{\pi} \beta_1 \sin(\beta_1)r(E, u(\beta_2)) \big[r(E, u(\beta_1+\beta_2-\theta_1+\theta_2))\label{MixedDensSmoothGrain}\\
& \quad + r(E, u(-\beta_1+\beta_2-\theta_1+\theta_2))  \big] |\cos(\theta_1)|^{\alpha} |\cos(\theta_2)|^{\alpha}
d\beta_1 d\beta_2\,\, d\theta_1 d\theta_2.\nn
\end{align}

\end{remark}

\subsection{Planar Boolean Model with Smooth Grains}\label{BMSmoothGrains}
In this subsection we consider a Boolean model $Z$ with a grain distribution $\Q$ which is concentrated on $\cK_0\cap C_+^2.$ Then we obtain from \cite[(4.2.20)]{Schneider.1993} and Fubini's theorem with the notation \eqref{NotationUofPhi} that
\begin{align*}
 \overline{\Phi}_1^{\,0,s}(X)
& = \frac{1}{s! \omega_{1+s}}\gamma \int\limits_{\mathcal{K}_0}\int\limits_0^{2\pi} r(K,u(\varphi))u(\varphi)^s d\varphi\, \mathbb{Q}(dK)\\
& =\frac{1}{s! \omega_{1+s}} \gamma \int\limits_0^{2\pi} \int\limits_{\mathcal{K}_0} r(K,u(\varphi))\mathbb{Q}(dK)u(\varphi)^s  d\varphi,
\end{align*}
where $r(K,u)$ is the radius of curvature of $K$ at $u$, for $K\in \mathcal{K}_0\cap C_+^2 $ and $u\in S^1$, compare \cite[(2.5.22)]{Schneider.1993}.

The surface tensor mean values are now related to the Fourier coefficients of the function
$g: [0,2\pi] \rightarrow  [0,\infty)$, where
\[g(\varphi):=   \gamma  \int\limits_{\mathcal{K}_0} r(K,u(\varphi))\mathbb{Q}(dK).
\]
We denote the $s$th Fourier coefficient of $g$ by $\hat{g}(s)$. Then, we obtain for $s\in\mathbb{N}_0$ that
\begin{align*}
 \hat{g}(s) & = \frac{1}{2\pi}\int\limits_{0}^{2\pi}g(\varphi) \mathrm{e}^{-\mathrm{i}s\varphi} d\varphi =
 \frac{1}{2\pi}\int\limits_{0}^{2\pi}g(\varphi) (\cos(\varphi) - \mathrm{i} \sin(\varphi))^s  d\varphi\\
 &= \sum\limits_{j=0}^s \binom{s}{j} (-\mathrm{i})^{s-j}
 \frac{1}{2\pi}\int\limits_{0}^{2\pi}g(\varphi) (\cos\varphi)^j (\sin\varphi)^{s-j} d\varphi\\
&  = \sum\limits_{j=0}^s \binom{s}{j} (-\mathrm{i})^{s-j}
 \;\frac{s!\omega_{1+s}}{2\pi}\; \left(\overline{\Phi}_1^{\,0,s}(X)\right)_{\underbrace{1,\ldots,1}_{j \text{ times }},\underbrace{2,\ldots,2}_{s-j \text{ times }}}
\end{align*}
and in the same way that
\[
 \hat{g}(-s) = \sum\limits_{j=0}^s \binom{s}{j} \mathrm{i}^{s-j}
 \;\frac{s!\omega_{1+s}}{2\pi}\; \left(\overline{\Phi}_1^{\,0,s}(X)\right)_{\underbrace{1,\ldots,1}_{j \text{ times }},\underbrace{2,\ldots,2}_{s-j \text{ times }}}.
\]
By the theorem of Carleson \cite{Carleson.1966}, it holds
\[
\lim\limits_{N\rightarrow\infty} \sum\limits_{s=-N}^{s=N}  \hat{g}(s) \mathrm{e}^{\mathrm{i}s \varphi}= g(\varphi)
\]
for almost all $\varphi \in[0,2\pi]$.
Hence, it follows that the tensors
\[
\overline{\Phi}_1^{\,0,s}(X),\quad s\in\mathbb{N}_0,
\]
determine
\[
\gamma \; \mathbb{E}[r(Z_0,u(\varphi)]\quad \text{ for almost all } \varphi \in[0,2\pi],
\]
where $Z_0$ denotes the typical grain, i.e., a random convex body with distribution $\mathbb{Q}$.
\begin{remark}\label{ausblick}
The situation in higher dimensions is similar. Instead of just one radius of curvature one can use the product of all principal radii of curvature and the Fourier expansion can be replaced by an expansion into spherical harmonics.
\end{remark}

\section{Simulations of non-isotropic Boolean models}\label{SectionSimulations}

In this section the Boolean model $Z_{\alpha, \gamma ,E}$ introduced in the previous section is simulated within the unit square with the base grain $E$ being an ellipse or a rectangle with its main axis parallel to the first coordinate axis. Subsequently we simply write $Z$ instead of $Z_{\alpha,\gamma,E}$. The number of grains with their center located in the unit square is Poisson distributed with parameter $\gamma$. The expected occupied area
fraction is abbreviated by
\begin{align*}
  \phi := \overline{V}_2(Z)=1-\mathrm{e}^{-\overline{V}_2(X)}\, .
\end{align*}
The coordinates of the grain centers are random numbers uniformly
distributed on the unit square. The
ellipses are triangulated with 30 points. The boundary conditions are
periodic. The Computational Geometry Algorithms Library (CGAL), see \cite{CGAL},  computes the union of the triangulated
grains. Papaya calculates the Minkowski
tensors and the Euler characteristic of the triangulated Boolean
model~\cite{SCHRODERTURK.2010}.\footnote{Free Software can be
  found at\\
  \url{http://www.theorie1.physik.uni-erlangen.de/research/papaya/index.html}}
Observe that Papaya uses the different normalization \[W_j^{r,s}:= \frac{r!s!\omega_{j+s}}{n \binom{n-1}{j-1}} \Phi_{n-j}^{r,s}, \quad 0< j\leq n,\, r,s\in\N_0,\, n\geq 2,\]
for the Minkowski tensors.
The length of the main semi axis of an ellipse is $p=1/20$; the length of
the minor semi axis varies from $q=1/80$ to $1/20$. For rectangles larger
systems are accessible with a length of the main semi axis $p=1/100$ and of
the minor semi axis from $q=1/400$ to $1/100$.

\subsection{\texorpdfstring{Surface tensor density $\overline{\Phi}_1^{\,0,2}(Z)$}{Surface tensor density}}

\begin{figure}[p]
  \centering
  \subfigure[][]{%
\ifthenelse{\boolean{blackwhite}}{\includegraphics[width=0.45\textwidth]{bw_ellipse_alpha_0}}{\includegraphics[width=0.45\textwidth]{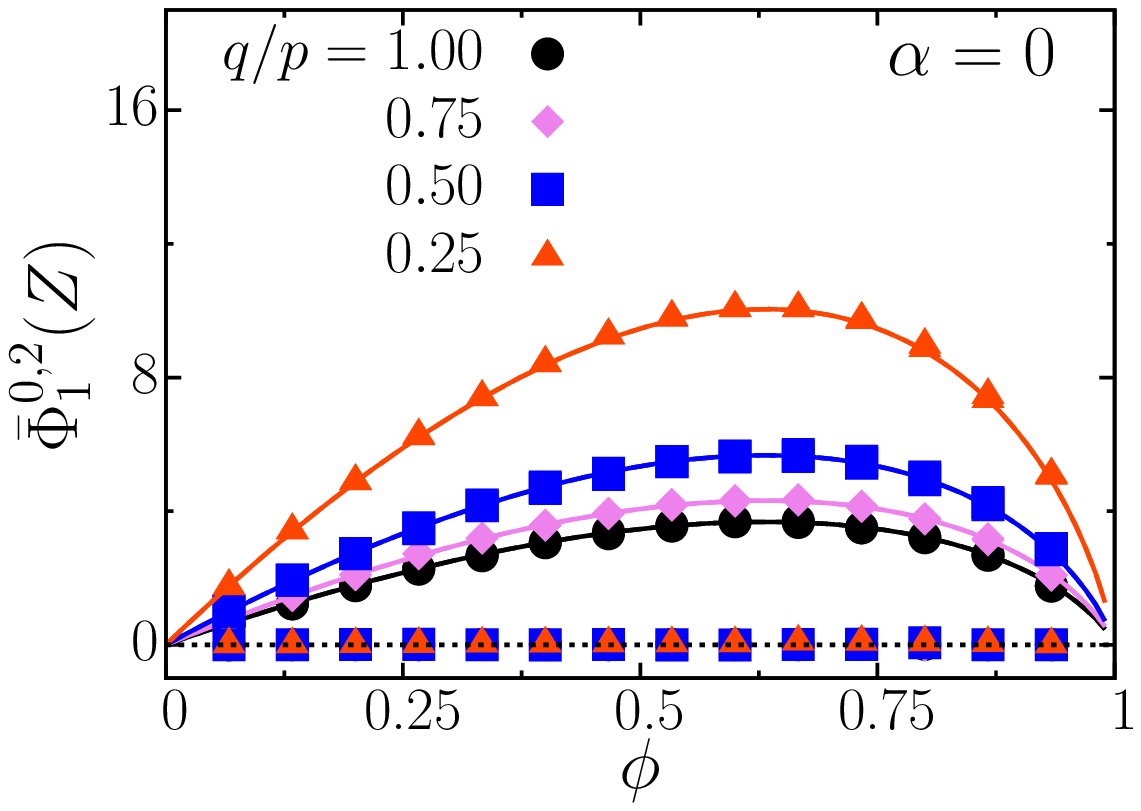}}
  }%
  \hspace{8pt}%
  \subfigure[][]{%
 \ifthenelse{\boolean{blackwhite}}{\includegraphics[width=0.45\textwidth]{bw_ellipse_alpha_1}}{\includegraphics[width=0.45\textwidth]{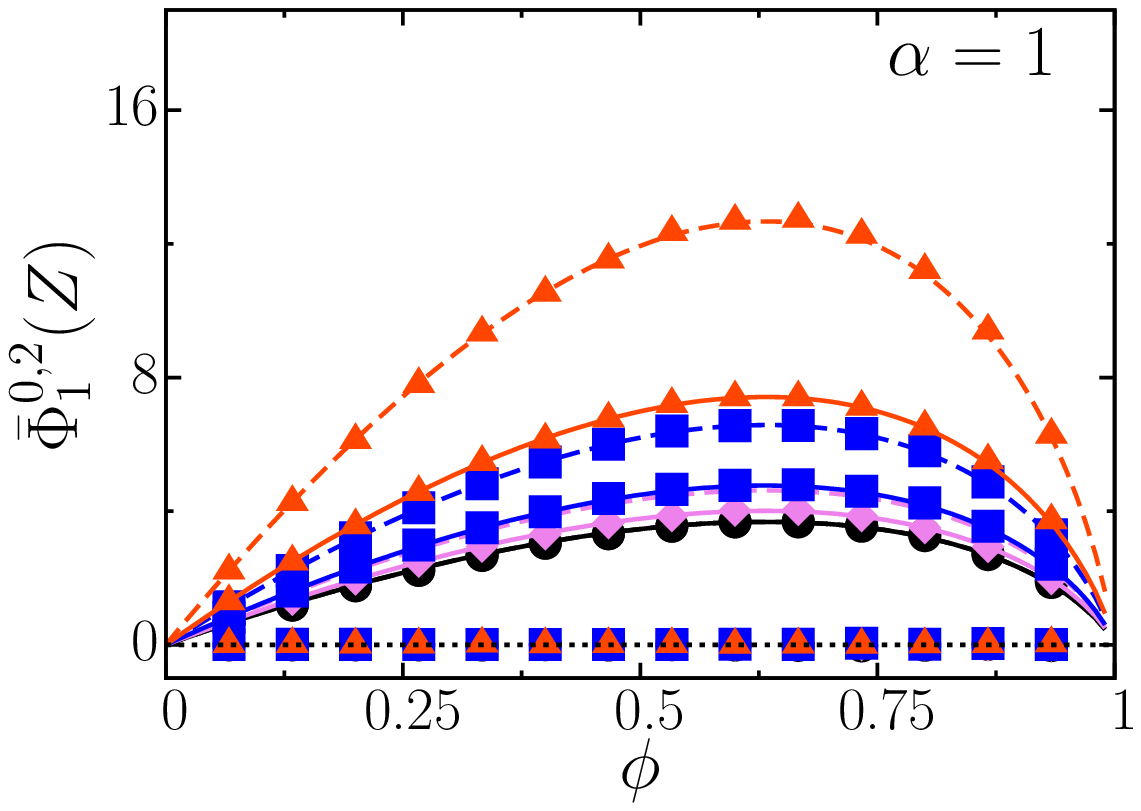}}

  }\\
  \subfigure[][]{%
  \ifthenelse{\boolean{blackwhite}}{\includegraphics[width=0.45\textwidth]{bw_ellipse_alpha_3}}{\includegraphics[width=0.45\textwidth]{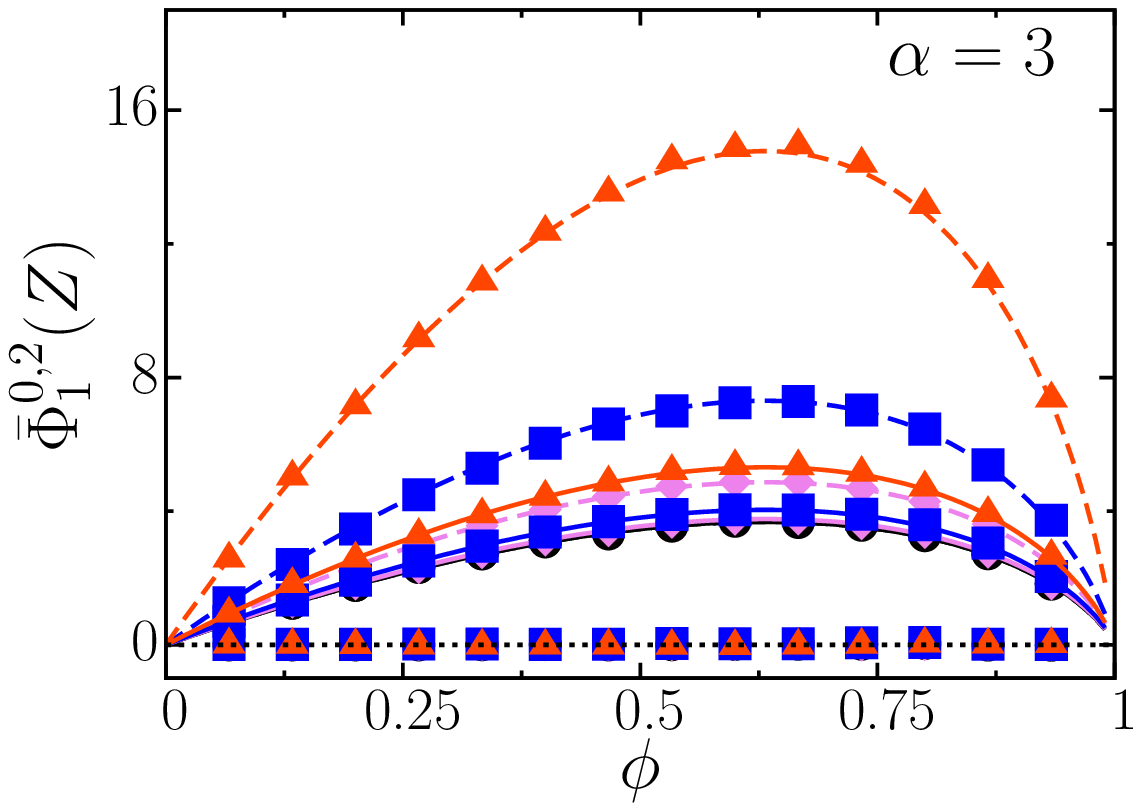}}

  }%
  \hspace{8pt}%
  \subfigure[][]{%
  \ifthenelse{\boolean{blackwhite}}{\includegraphics[width=0.45\textwidth]{bw_ellipse_alpha_25}}{\includegraphics[width=0.45\textwidth]{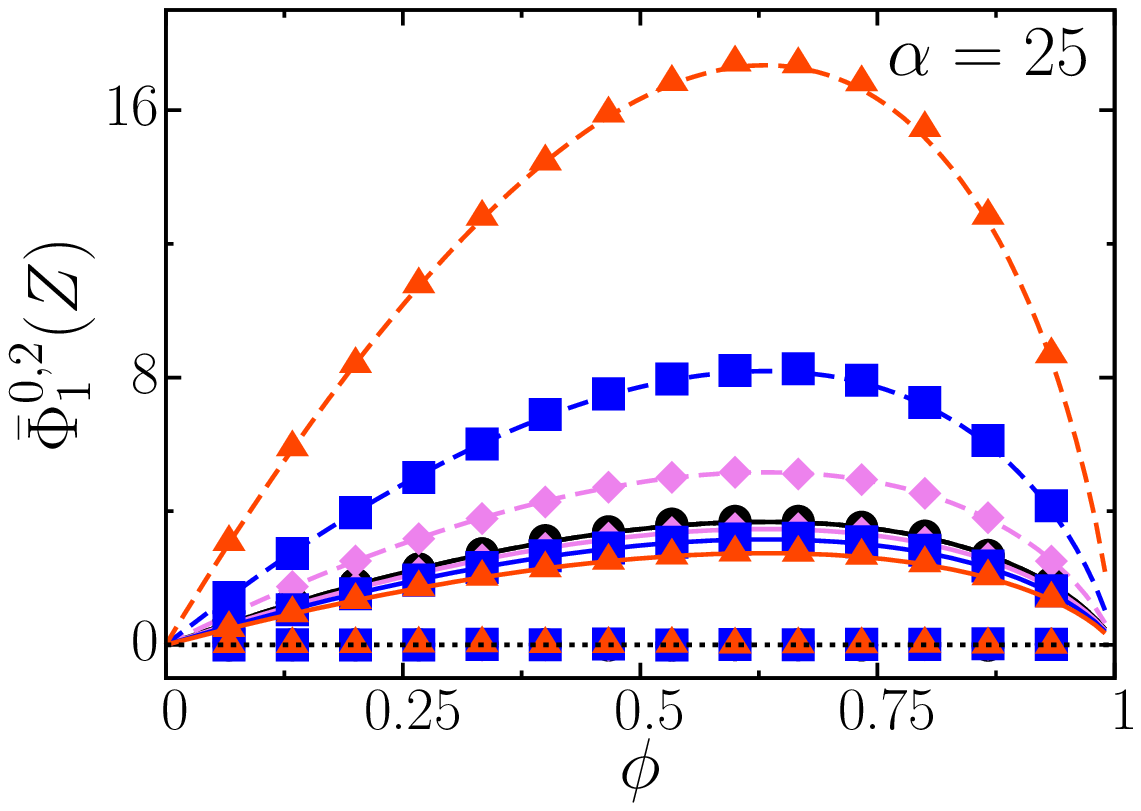}}

  }\\
  \subfigure[][]{%
  \ifthenelse{\boolean{blackwhite}}{\includegraphics[width=0.45\textwidth]{bw_ellipse_alpha_infty}}{\includegraphics[width=0.45\textwidth]{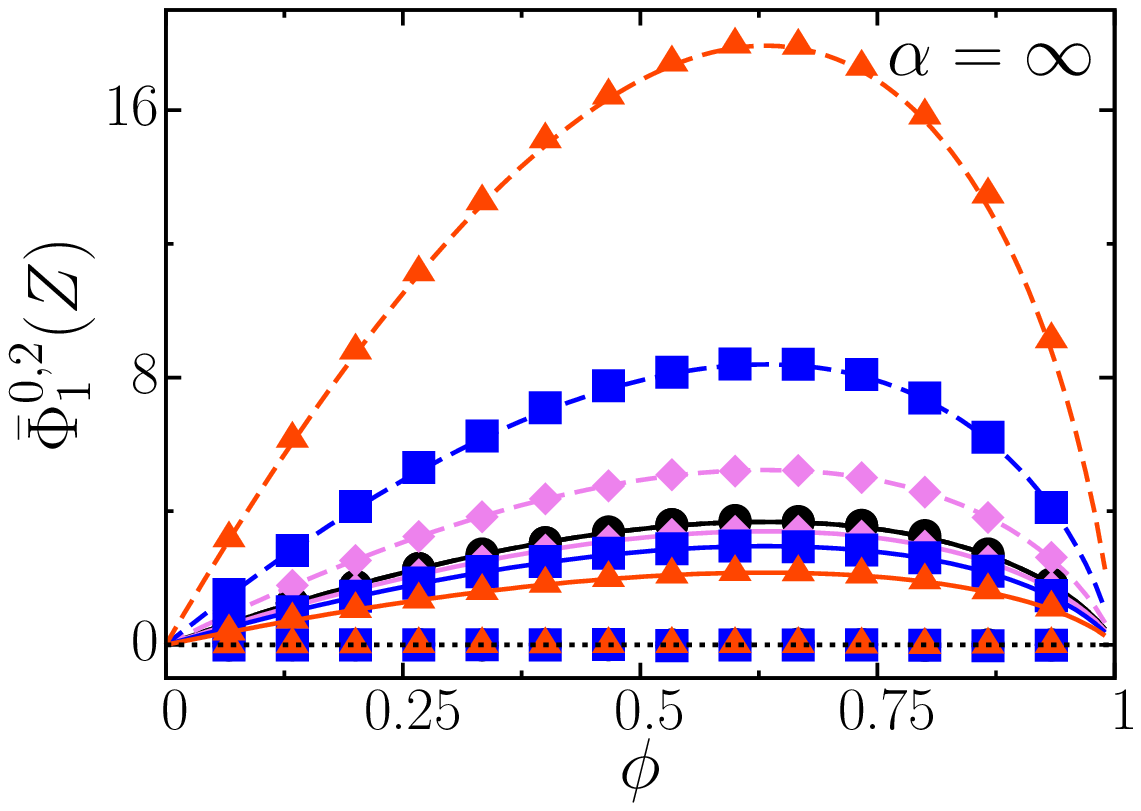}}

  }%
  \caption{Minkowski tensor density $\overline{\Phi}_1^{\,0,2}(Z)$ for the
    Boolean model with ellipses as a function of the expected occupied area
    fraction $\phi$ for varying aspect ratio $q/p$.  The numerical
    values (represented by small symbols) are compared with the analytic function from \eqref{FunctionSurfaceTensor}; $\left(\overline{\Phi}_1^{\,0,2}\right)_{1,1}$: dashed
    line; $\left(\overline{\Phi}_1^{\,0,2}\right)_{2,2}$: solid line;
    $\left(\overline{\Phi}_1^{\,0,2}\right)_{1,2}$: dotted line; (a)-(e) represent
    differently anisotropic orientation distributions.}
  \label{fig:w102_boolean_ellipses}
\end{figure}

\begin{figure}[p]
  \centering
  \subfigure[][]{%
  \ifthenelse{\boolean{blackwhite}}{\includegraphics[width=0.45\textwidth]{bw_rect_alpha_0}}{\includegraphics[width=0.45\textwidth]{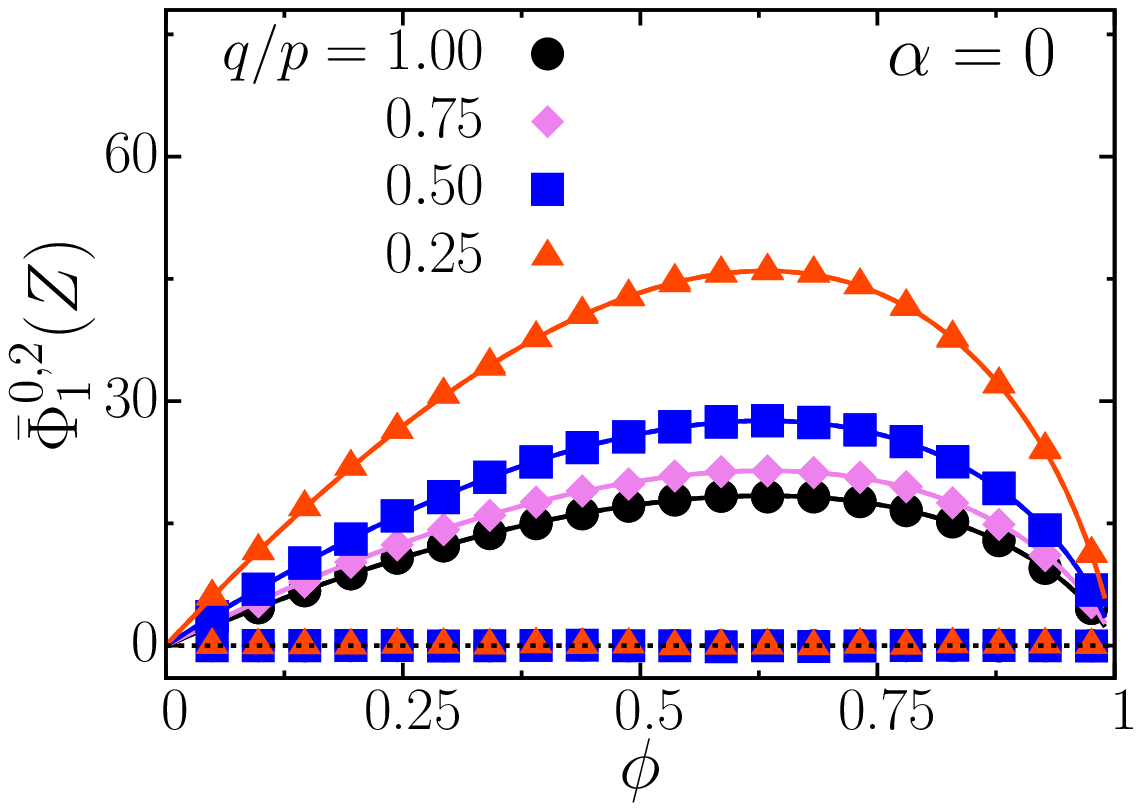}}
  }%
  \hspace{8pt}%
  \subfigure[][]{%
 \ifthenelse{\boolean{blackwhite}}{\includegraphics[width=0.45\textwidth]{bw_rect_alpha_1}}{\includegraphics[width=0.45\textwidth]{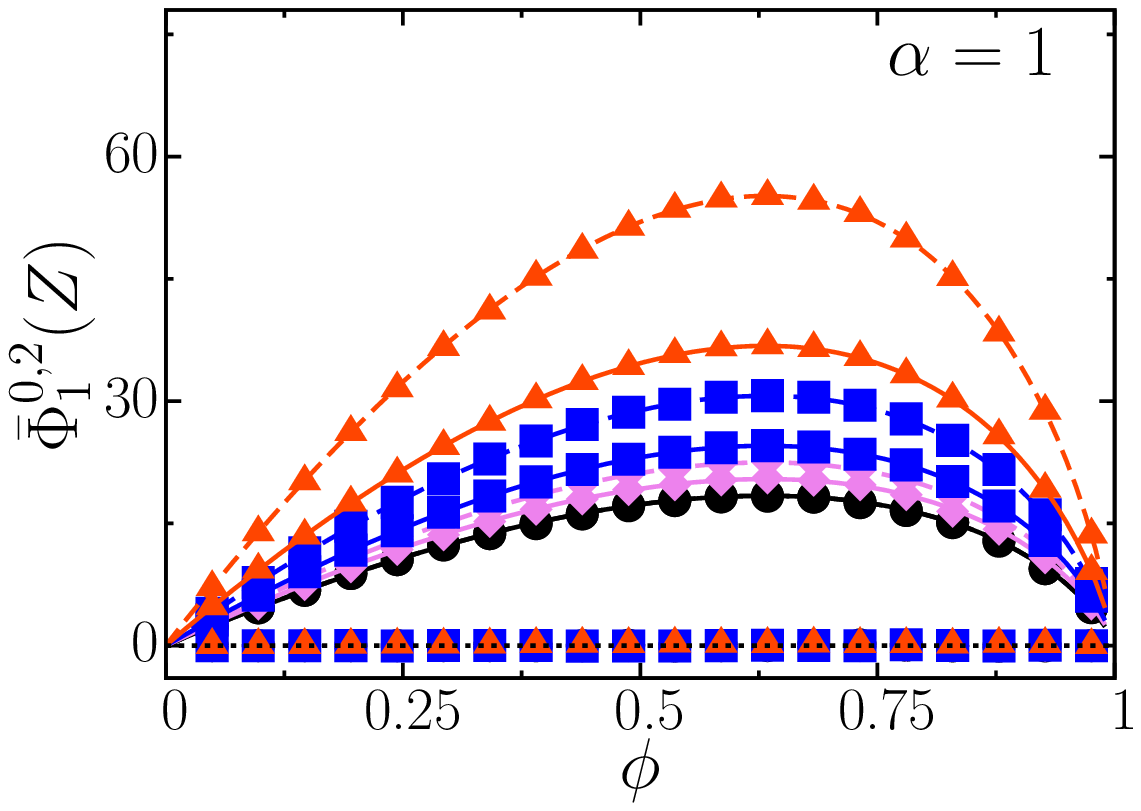}}
  }\\
  \subfigure[][]{%
    \ifthenelse{\boolean{blackwhite}}{\includegraphics[width=0.45\textwidth]{bw_rect_alpha_3}}{\includegraphics[width=0.45\textwidth]{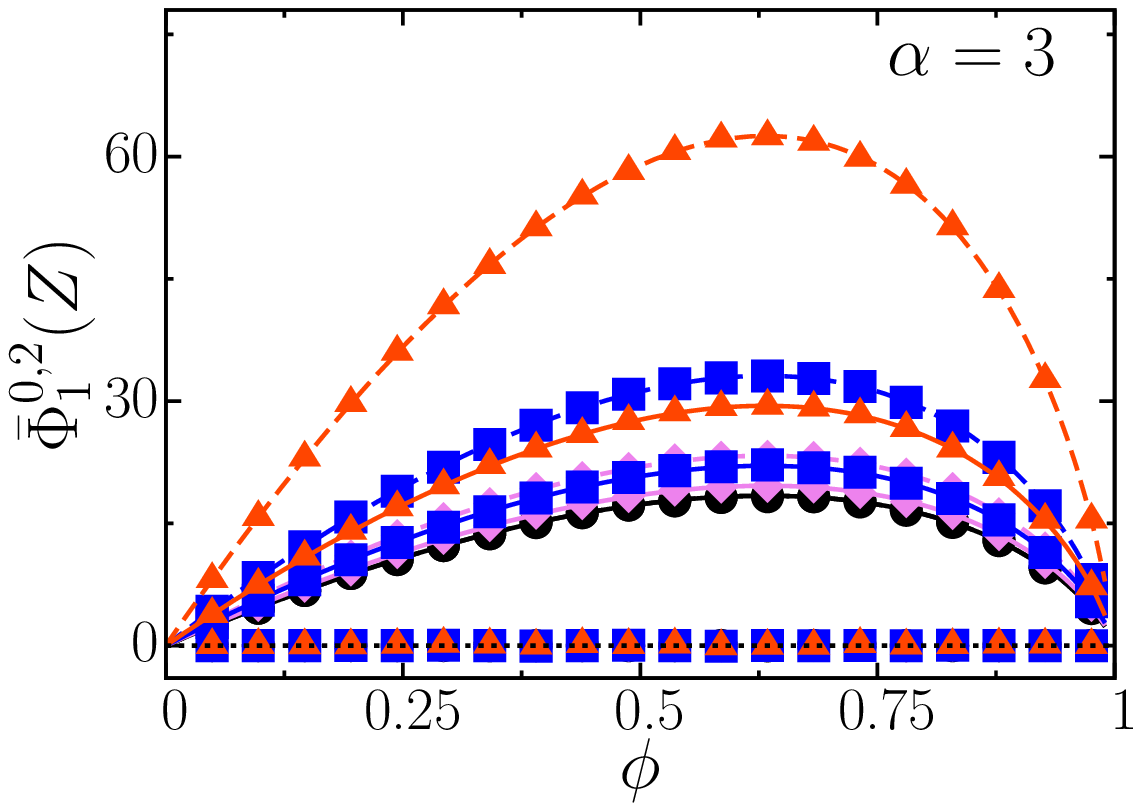}}
  }%
  \hspace{8pt}%
  \subfigure[][]{%
   \ifthenelse{\boolean{blackwhite}}{\includegraphics[width=0.45\textwidth]{bw_rect_alpha_25}}{\includegraphics[width=0.45\textwidth]{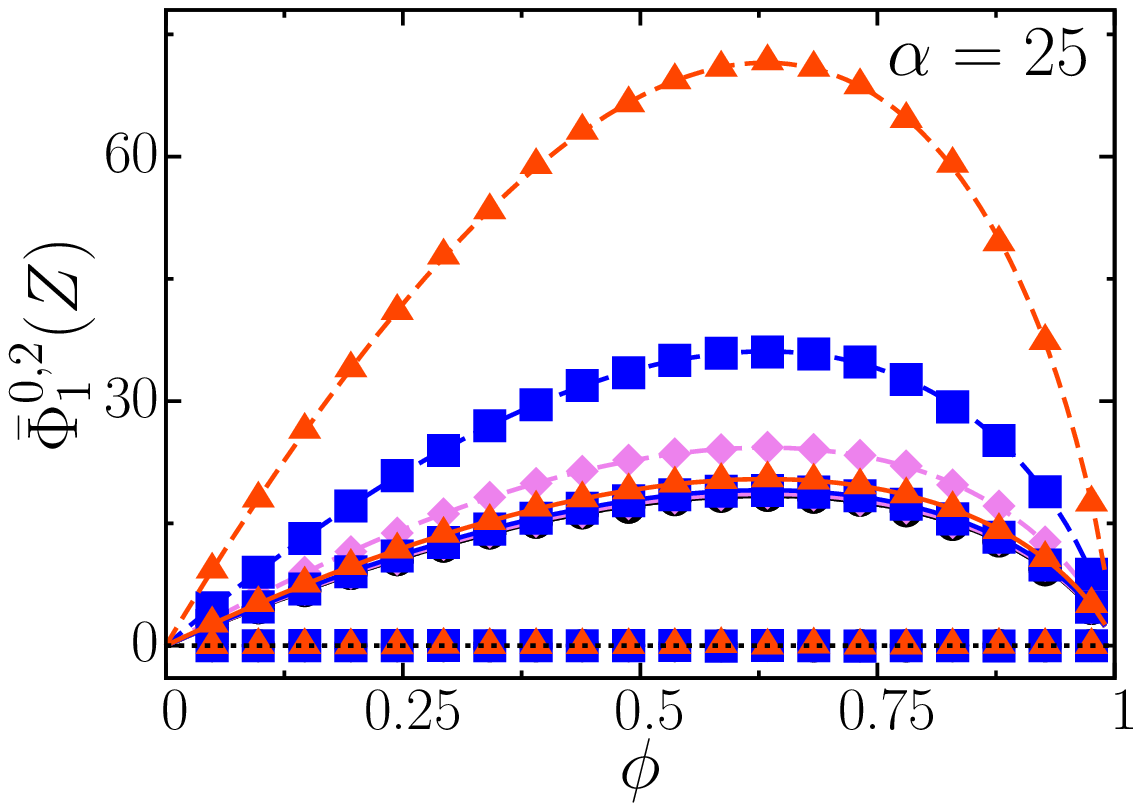}}
  }\\
  \subfigure[][]{%
 \ifthenelse{\boolean{blackwhite}}{\includegraphics[width=0.45\textwidth]{bw_rect_alpha_infty}}{\includegraphics[width=0.45\textwidth]{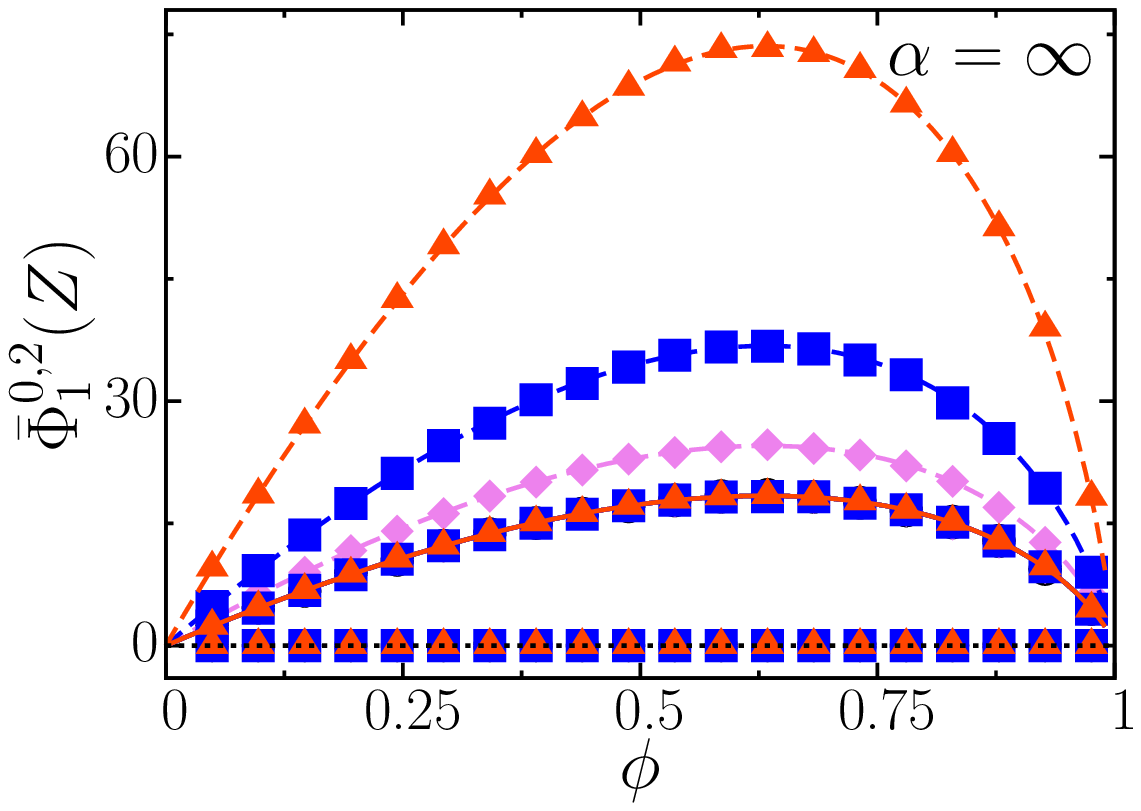}}
  }%
  \caption{Minkowski tensor density $\overline{\Phi}_1^{\,0,2}(Z)$ for the
    Boolean model with rectangles -- for details see
    Fig.~\ref{fig:w102_boolean_ellipses}.}
  \label{fig:w102_boolean_rectangles}
\end{figure}

The elements of the tensor density $\overline{\Phi}_1^{\,0,2}(Z)$ of the Boolean
model as a function of the expected occupied area fraction $\phi$ are plotted
in Figures~\ref{fig:w102_boolean_ellipses} and
\ref{fig:w102_boolean_rectangles} for different
$\alpha=0\dots\infty$. The error bars are smaller than point size. The
curves depict the analytic function
\begin{equation}\label{FunctionSurfaceTensor}
\phi \mapsto \overline{\Phi}_1^{\,0,2}(Z)= (\phi-1)\ln(1-\phi) \, c_1^{0,2}(\alpha,E),
\end{equation}
where
\[
c_1^{0,2}(\alpha,E):=\frac{1}{V_2(E)\gamma} \overline{\Phi}_1^{\,0,2}(X),
\]
which follows from Corollary \ref{Density} and \eqref{FormulaGamma}.
For given $\alpha$ the above constant $c_1^{0,2}(\alpha,E)$ can be calculated using \eqref{DensNonIsoBMs=2} and either the representation of the Minkowski tensors for polytopes (Theorem \ref{translativeMink}, {\rm(iv)}),  in the case that the base grain is a rectangle or, in the case that the base grain is an ellipse, as indicated in Remark \ref{RemarkSmoothBaseGrain} using the parametrization
\begin{equation}\label{ParamEllipse}
x: \varphi\mapsto \begin{pmatrix}\begin{array}{ll} p \cos\varphi\\ q\sin\varphi \end{array}\end{pmatrix},\quad \varphi\in[0,2\pi],
\end{equation}
of $\partial E$, where $p \geq q>0$ are the lengths of the main semi axes of the ellipse, and numerical integration. The numerical and analytic values
are in excellent agreement. Since we consider base grains which are symmetric with respect to both coordinate axes, $\Phi_1^{0,2}(E)$ has diagonal form and due to \eqref{DensNonIsoBMs=2} and \eqref{ParBMSurfaceTensRel} this property carries over to $\overline{\Phi}_1^{\,0,2}(Z)$. If the base grain $E$ is a circle or a square, then $\Phi_1^{0,2}(E)$ is proportional to the unit matrix and from \eqref{DensNonIsoBMs=2} and \eqref{ParBMSurfaceTensRel} the same follows for $\overline{\Phi}_1^{\,0,2}(Z)$.
Figure~\ref{fig:w102_of_alpha_boolean_ellipses} shows for a Boolean
model with ellipses the difference of the diagonal elements of the
tensor density $\overline{\Phi}_1^{\,0,2}(Z)$, i.e., the difference of the eigenvalues in $x$- and
$y$-direction, as a function of the orientation parameter $\alpha$,
again for different aspect ratios $q/p$. With an increasing $\alpha$ the probability density function $f_{\alpha}$ of the random angle $\theta$ is more and more concentrated around $0$ and the
difference in the eigenvalues increases, obviously except for
circles. All simulations were performed at expected occupied area fraction
$\phi=1/3$.

\begin{figure}[tb]
  \centering
  \ifthenelse{\boolean{blackwhite}}{\includegraphics[width=0.45\textwidth]{bw_ellipse_alpha_rung}}{\includegraphics[width=0.45\textwidth]{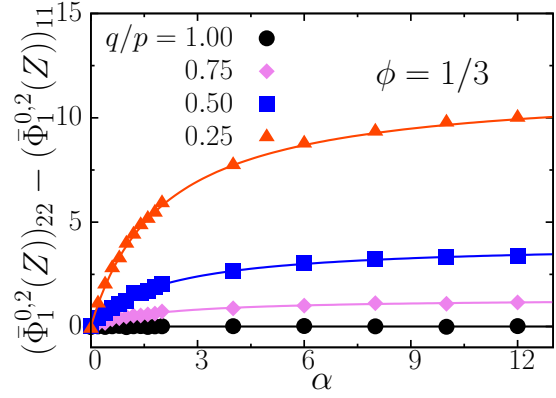}}
  \caption{Difference of the eigenvalues of the tensor density
    $\overline{\Phi}_1^{\,0,2}(Z)$ as a function of the orientation parameter
    $\alpha$ for different aspect ratios $q/p$ of the ellipses. The
    expected occupied area fraction for the simulation was chosen to be
    $\phi=1/3$. The lines show the analytic functions which follow similarly as in \eqref{FunctionSurfaceTensor}. }
  \label{fig:w102_of_alpha_boolean_ellipses}
\end{figure}

\subsection{Estimation of model parameters}\label{EstOfModelParameters}

Given a measured eigenvalue of the tensor density $\hat{\Phi}_1^{0,2}(Z)$
and the measured occupied area fraction $\hat{\phi}$ of a sample of
the Boolean model, Equations~(\ref{eq:gamma_Boolean_model}) and
(\ref{eq:phi_1_02_Boolean_model}) allow an estimate of both the
intensity $\gamma$ and the orientation parameter
$\alpha$. From equation \eqref{FormulaGamma} and \eqref{FormulaAlpha} we deduce the estimates
\begin{align}
  \hat{\gamma} &= -\frac{\ln\left( 1 - \hat{\phi} \right)}{V_2(E)}\, .
  \label{eq:gamma_Boolean_model}
\end{align}
and
\begin{align}
  \hat{\alpha} &=
  \frac{\hat{\gamma}\left(\left(\Phi_1^{0,2}(E)\right)_{11}+\left(\Phi_1^{0,2}(E)\right)_{22} \right)-2\mathrm{e}^{\hat{\gamma}V_2(E)}\left(\hat{\Phi}_1^{0,2}(Z)\right)_{11} }{\mathrm{e}^{\hat{\gamma}V_2(E)}\left(\hat{\Phi}_1^{0,2}(Z)\right)_{11} -\hat{\gamma}\left(\Phi_1^{0,2}(E)\right)_{11}}
  \, .\label{eq:phi_1_02_Boolean_model}
\end{align}
In the simulations the intensity was chosen to be $\gamma= \ln(15/14)\approx 0.06899$.
The estimate is not well defined for a base grain $E$ with
 \[
\left(\Phi_1^{0,2}(E)\right)_{11}=\left(\Phi_1^{0,2}(E)\right)_{22},
\]
which is called isotropic with respect to the tensor density $\overline{\Phi}_1^{\,0,2}$. Clearly, for
$\alpha \rightarrow \infty$ also the estimate diverges.
Notice that estimates $\hat{\alpha}<-1$ may appear, although $\alpha<-1$
is forbidden.

Figure~\ref{fig:histogram_estimate} depicts histograms
of the estimates of orientation parameter $\alpha$ and intensity
$\gamma$ for the specific choice $V_2(E)=1$; for a Boolean model with ellipses in the plots (a) and (b) and
with rectangles in (c) and (d), respectively. In each case, 1000
simulations were performed with an aspect ratio $q/p=1/4$ of the
grains. The length of the simulation box was chosen to be $L=100\, p$
for both the ellipses and the rectangles. In each plot, the \ifthenelse{\boolean{blackwhite}}{dashed}{black} line depicts the true value of the parameter which is to be estimated. The
mean of the distribution of the estimates and the error of the mean are
computed via bootstrapping; the $1 \sigma$ band of the mean is also
shown as a broad\ifthenelse{\boolean{blackwhite}}{ }{ colored }line.

\begin{figure}[tb]
  \centering
  \subfigure[][]{%
   \ifthenelse{\boolean{blackwhite}}{\includegraphics[width=0.45\textwidth]{bw_hist_ell_alpha_samp}}{\includegraphics[width=0.45\textwidth]{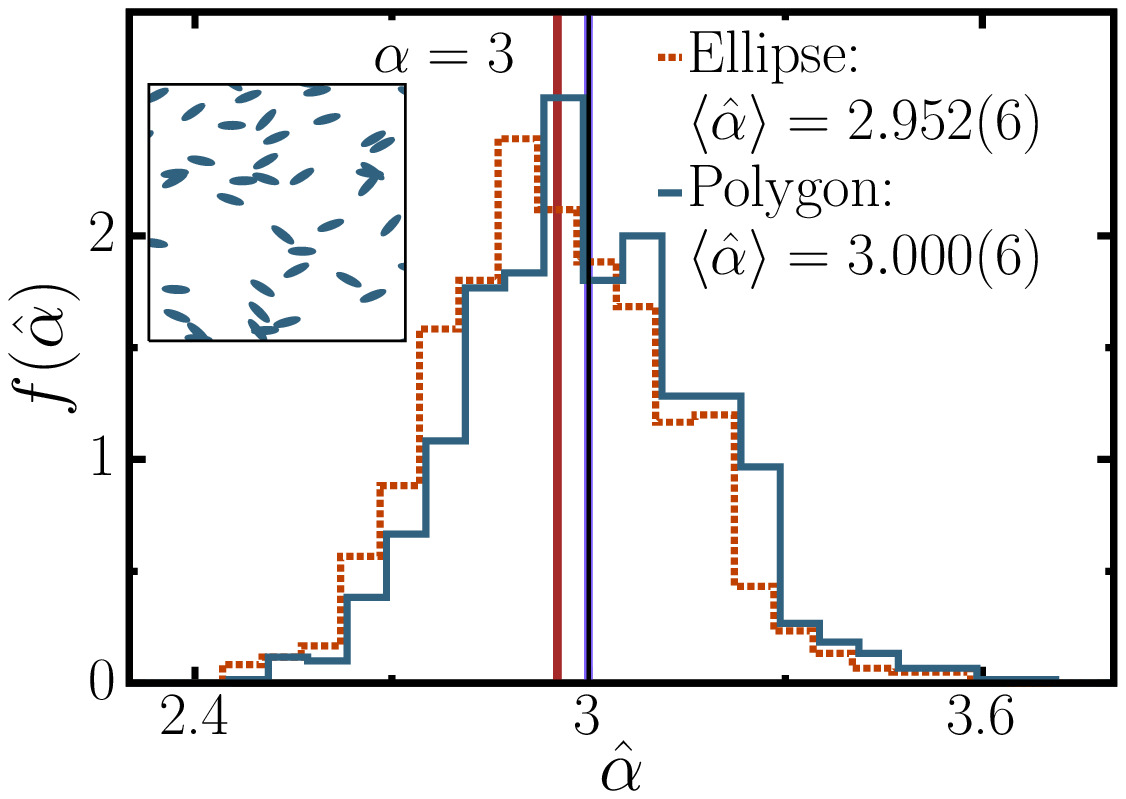}}
  }%
  \hspace{8pt}%
  \subfigure[][]{%
  \ifthenelse{\boolean{blackwhite}}{\includegraphics[width=0.45\textwidth]{bw_hist_ell_int}}{\includegraphics[width=0.45\textwidth]{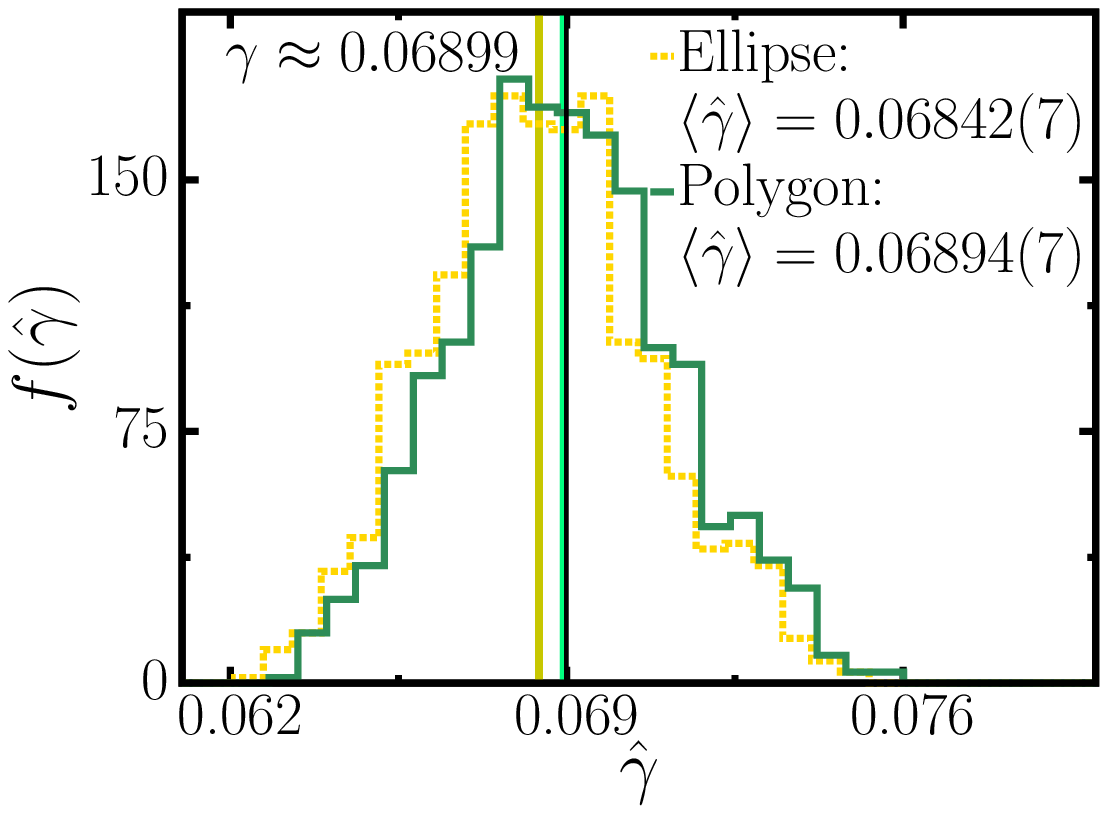}}
  }\\
  \subfigure[][]{%
    \ifthenelse{\boolean{blackwhite}}{\includegraphics[width=0.45\textwidth]{bw_hist_rect_alpha_samp}}{\includegraphics[width=0.45\textwidth]{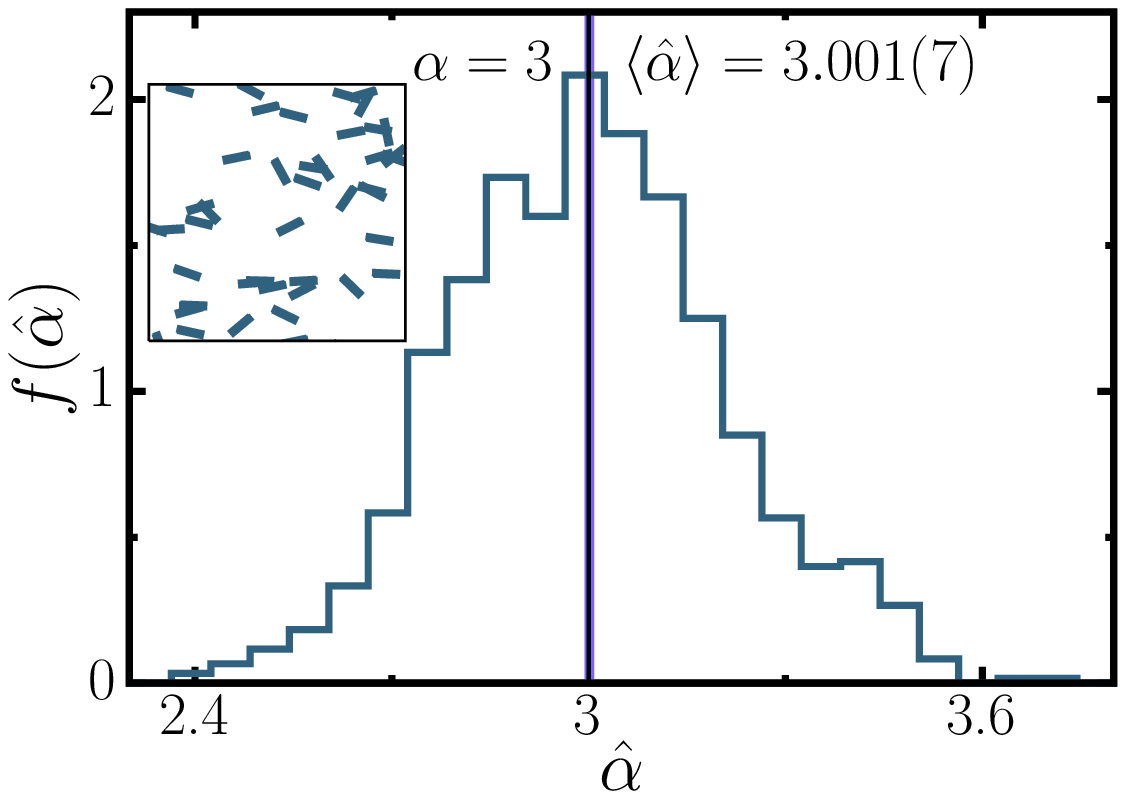}}
  }%
  \hspace{8pt}%
  \subfigure[][]{%
  \ifthenelse{\boolean{blackwhite}}{\includegraphics[width=0.45\textwidth]{bw_hist_rect_int}}{\includegraphics[width=0.45\textwidth]{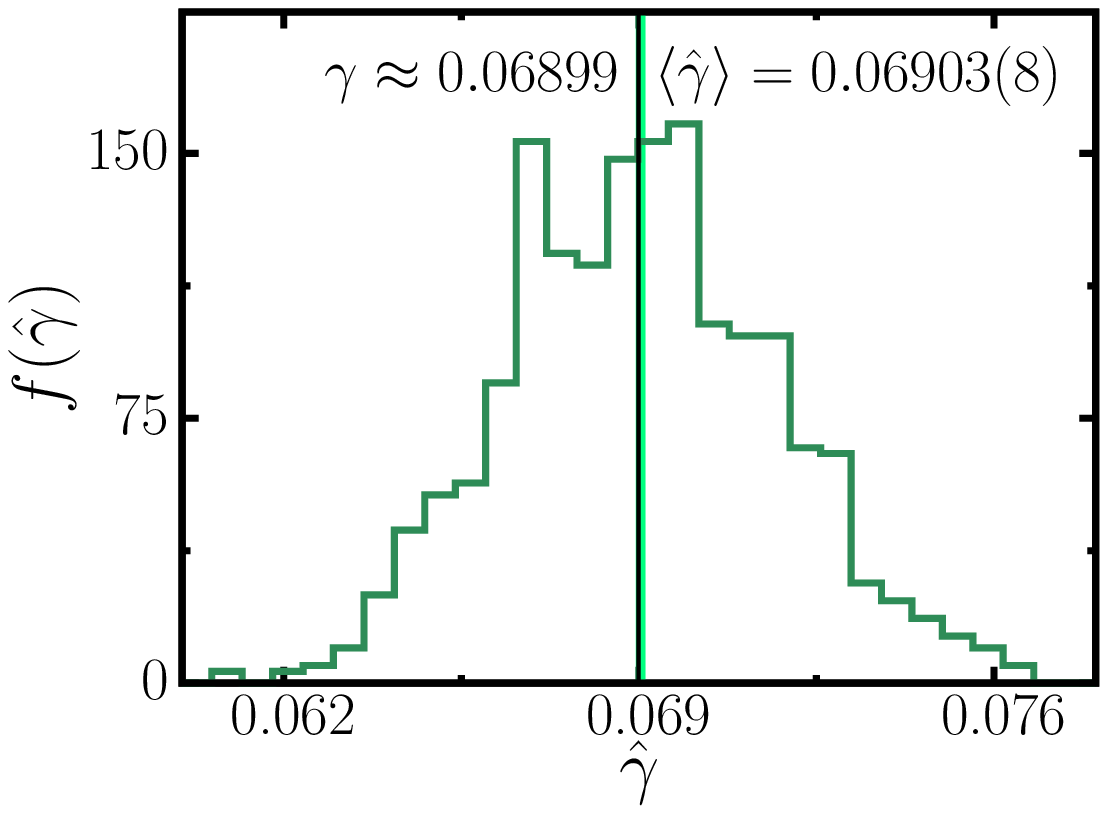}}
  }

  \caption[toc entry]{Histograms of the estimates of orientation parameter $\alpha$ and
intensity $\gamma$ for the specific choice $V_2(E)=1$; (a) and (b) for ellipses; (c) and (d) for
rectangles. The\ifthenelse{\boolean{blackwhite}}{ dashed }{ black }lines depict the true values of the parameters
which are to be estimated. The broad\ifthenelse{\boolean{blackwhite}}{ }{ colored }lines show the $1\sigma$
band of the mean of the estimate. For (a) and (b) the parameters were
estimated both with the grain characteristics of an ellipse (dotted
lines) and of the polygon which was actually used for the simulation
(solid line). The insets in (a) and (c) illustrate samples of the
Boolean models.}
  \label{fig:histogram_estimate}
\end{figure}
Both the estimation of the intensity and of the orientation parameter
of the Boolean model with rectangles in Figs.~(c) and (d) appear to be
unbiased. The original parameters can be retained with high
statistical precision. However, for smaller system sizes finite size
effects may lead to a significant bias. For the simulation of the
Boolean model with ellipses in Figs.~(a) and (b) a polygon with 30
vertices approximated an ellipse; the relative error in the area is
$0.7\,\%$ and in $\left(\Phi_1^{0,2}(E)\right)_{1,1}$ only
$0.2\,\%$. Nevertheless, when the parameters of the Boolean model were
estimated using the single grain characteristics of an ellipse, the
mean was eight standard deviations away from the true value. However,
when the grain characteristics of the polygon were used, the estimator was bias free within
statistical significance. The method can be used as a very sensitive
test for the parameters of the system.

\begin{figure}[p]
  \centering
  \subfigure[][]{%
  \ifthenelse{\boolean{blackwhite}}{\includegraphics[width=0.45\textwidth]{bw_ec_ellipse_alpha_0}}{\includegraphics[width=0.45\textwidth]{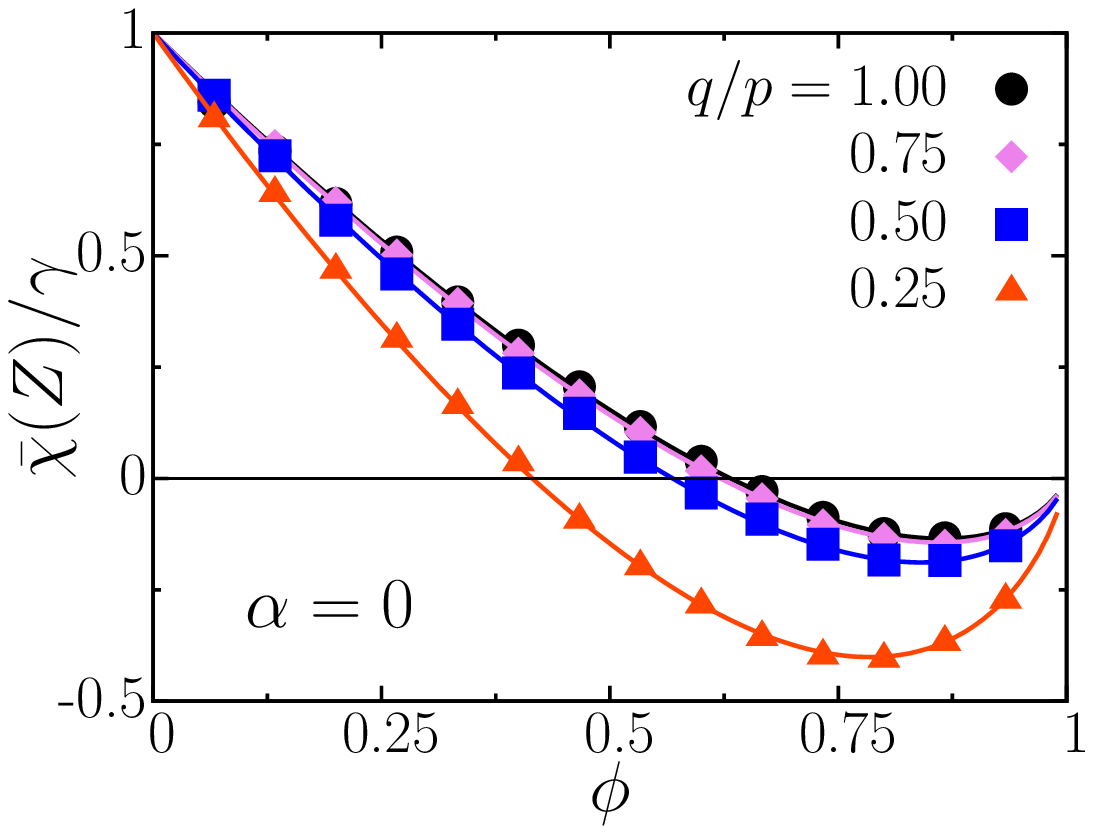}}
  }%
  \hspace{8pt}%
  \subfigure[][]{%
  \ifthenelse{\boolean{blackwhite}}{\includegraphics[width=0.45\textwidth]{bw_ec_ellipse_alpha_1}}{\includegraphics[width=0.45\textwidth]{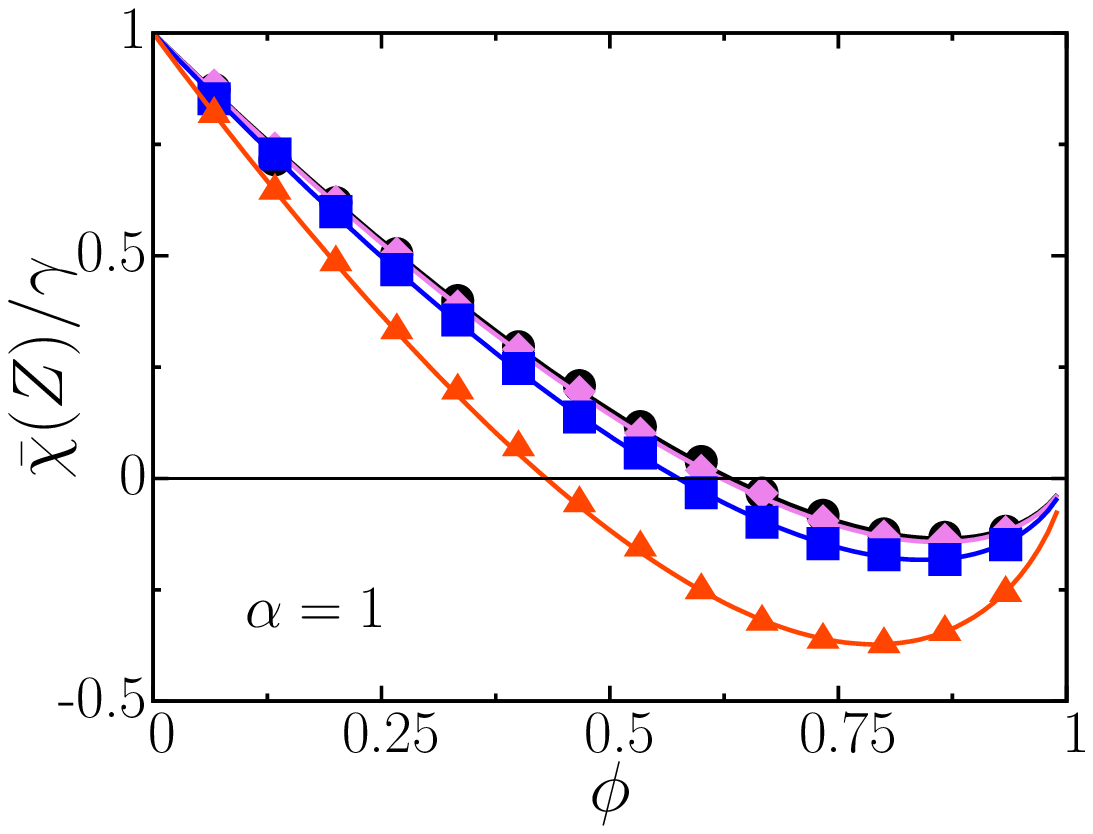}}
  }\\
  \subfigure[][]{%
    \ifthenelse{\boolean{blackwhite}}{\includegraphics[width=0.45\textwidth]{bw_ec_ellipse_alpha_3}}{\includegraphics[width=0.45\textwidth]{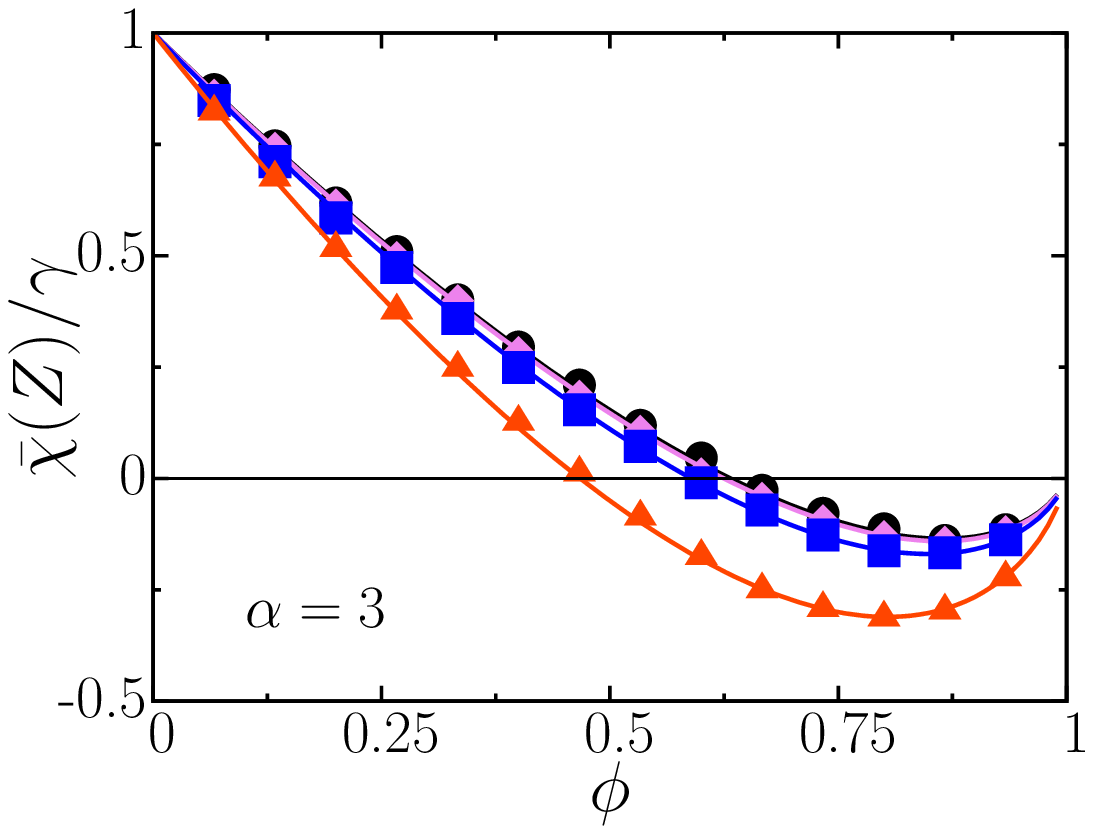}}
  }%
  \hspace{8pt}%
  \subfigure[][]{%
    \ifthenelse{\boolean{blackwhite}}{\includegraphics[width=0.45\textwidth]{bw_ec_ellipse_alpha_25}}{\includegraphics[width=0.45\textwidth]{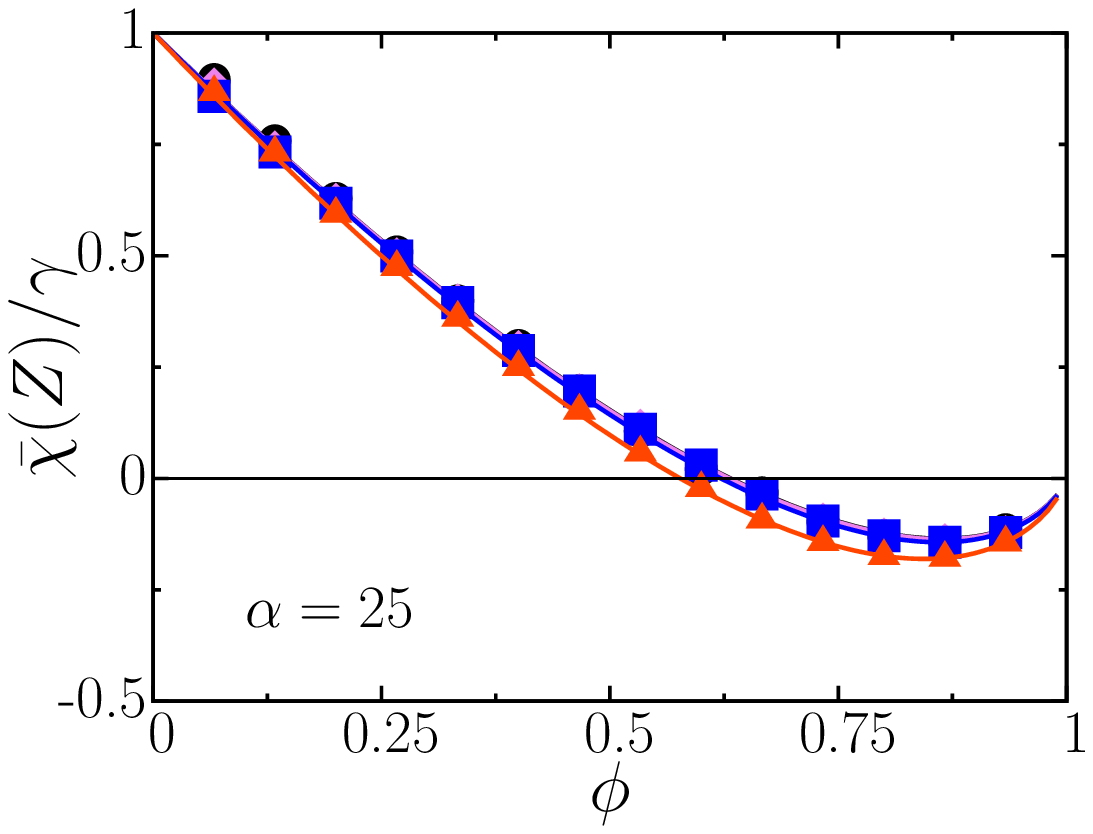}}
  }\\
  \subfigure[][]{%
    \ifthenelse{\boolean{blackwhite}}{\includegraphics[width=0.45\textwidth]{bw_ec_ellipse_alpha_infty}}{\includegraphics[width=0.45\textwidth]{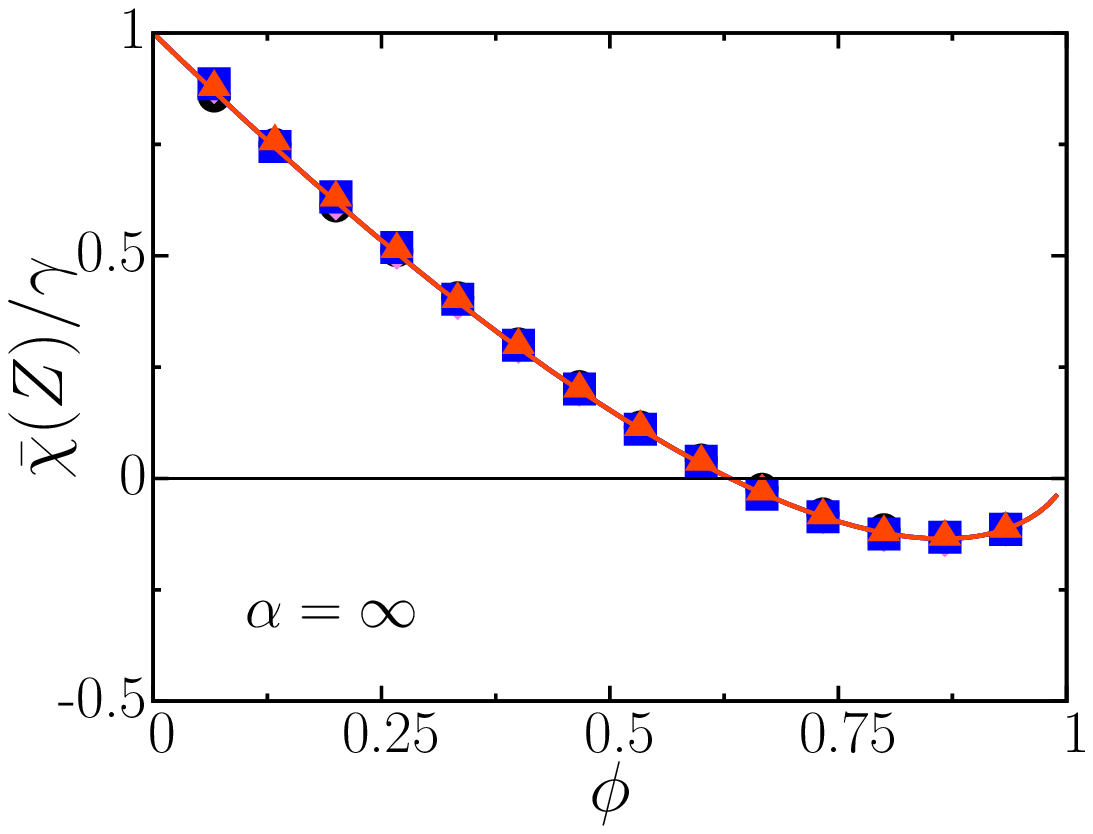}}
   }%
  \caption{Euler characteristic density $\overline{\chi}(Z)$ normalized with
    the intensity $\gamma$ for the Boolean model with ellipses as a
    function of the expected occupied area fraction $\phi$ for varying aspect
    ratio $q/p$. The numerical values are
    compared with the analytic functions from
    Eq.~\eqref{FunctionEulerChar}. (a)-(e) represent
    differently anisotropic orientation distributions.}
  \label{fig:chi_boolean_ellipses}
\end{figure}

\begin{figure}[p]
  \centering
  \subfigure[][]{%
  \ifthenelse{\boolean{blackwhite}}{\includegraphics[width=0.45\textwidth]{bw_ec_rect_alpha_0}}{\includegraphics[width=0.45\textwidth]{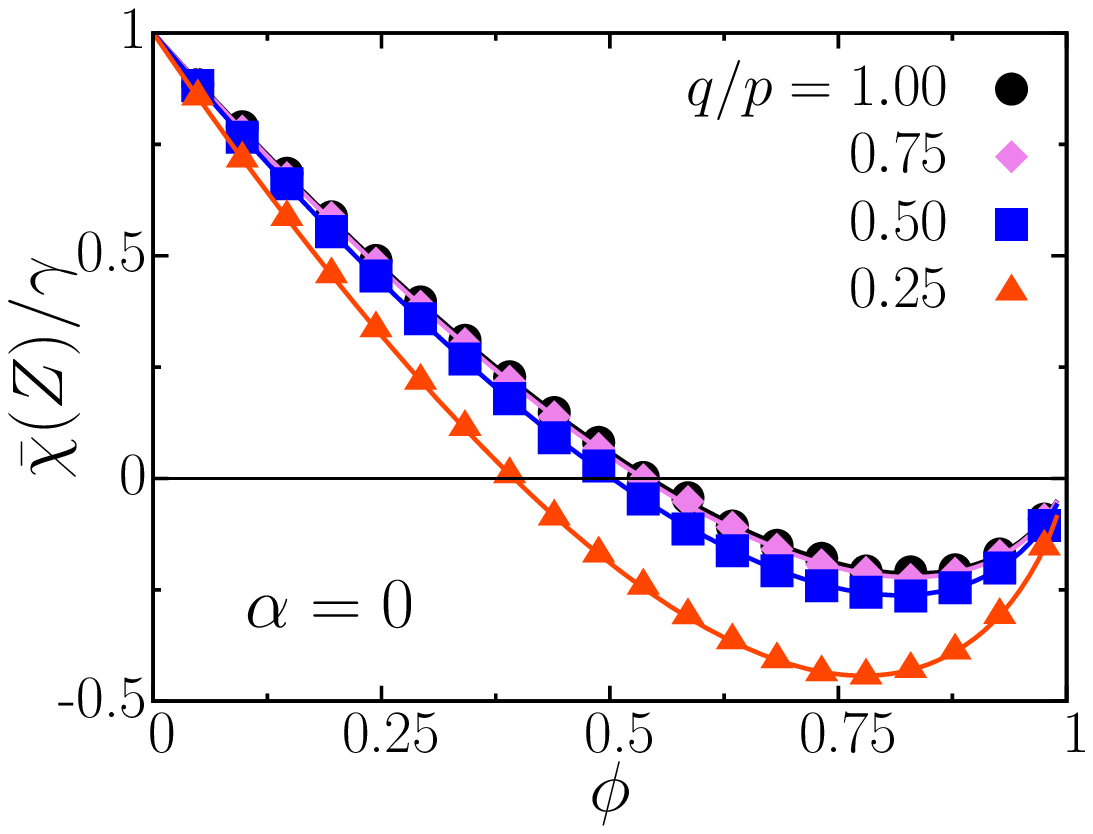}}
  }%
  \hspace{8pt}%
  \subfigure[][]{%
  \ifthenelse{\boolean{blackwhite}}{\includegraphics[width=0.45\textwidth]{bw_ec_rect_alpha_1}}{\includegraphics[width=0.45\textwidth]{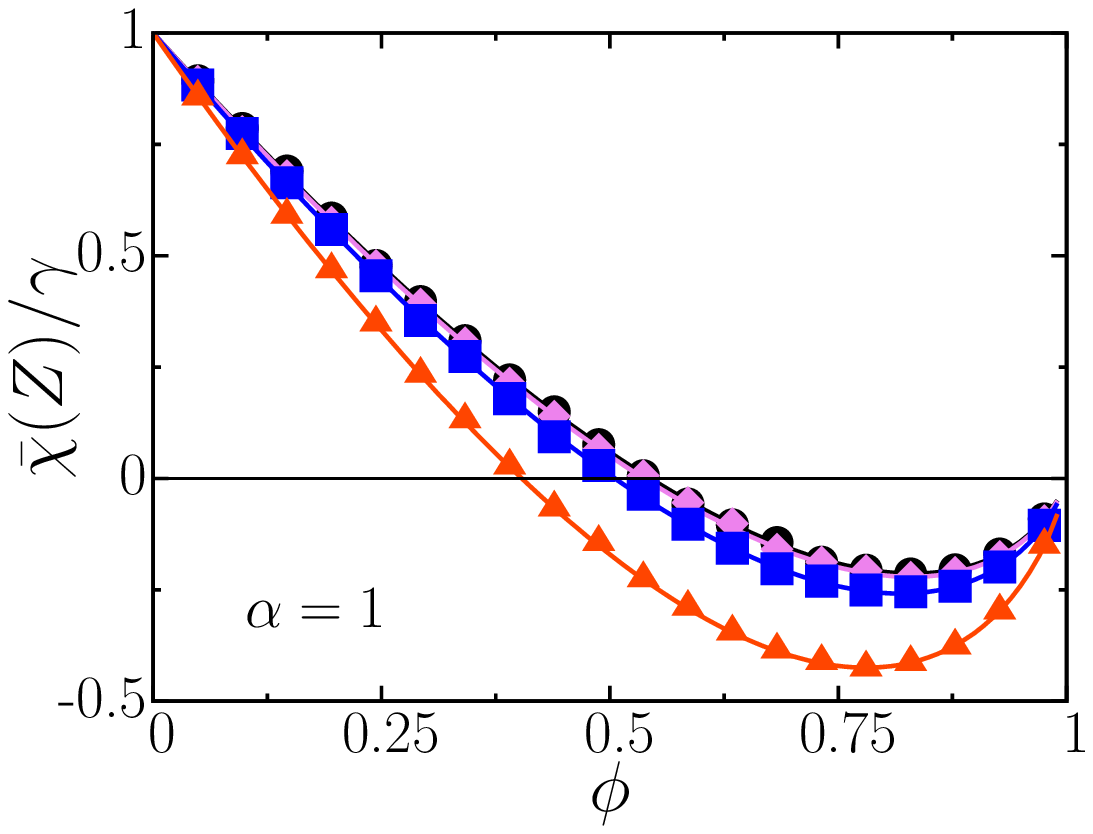}}
  }\\
  \subfigure[][]{%
  \ifthenelse{\boolean{blackwhite}}{\includegraphics[width=0.45\textwidth]{bw_ec_rect_alpha_3}}{\includegraphics[width=0.45\textwidth]{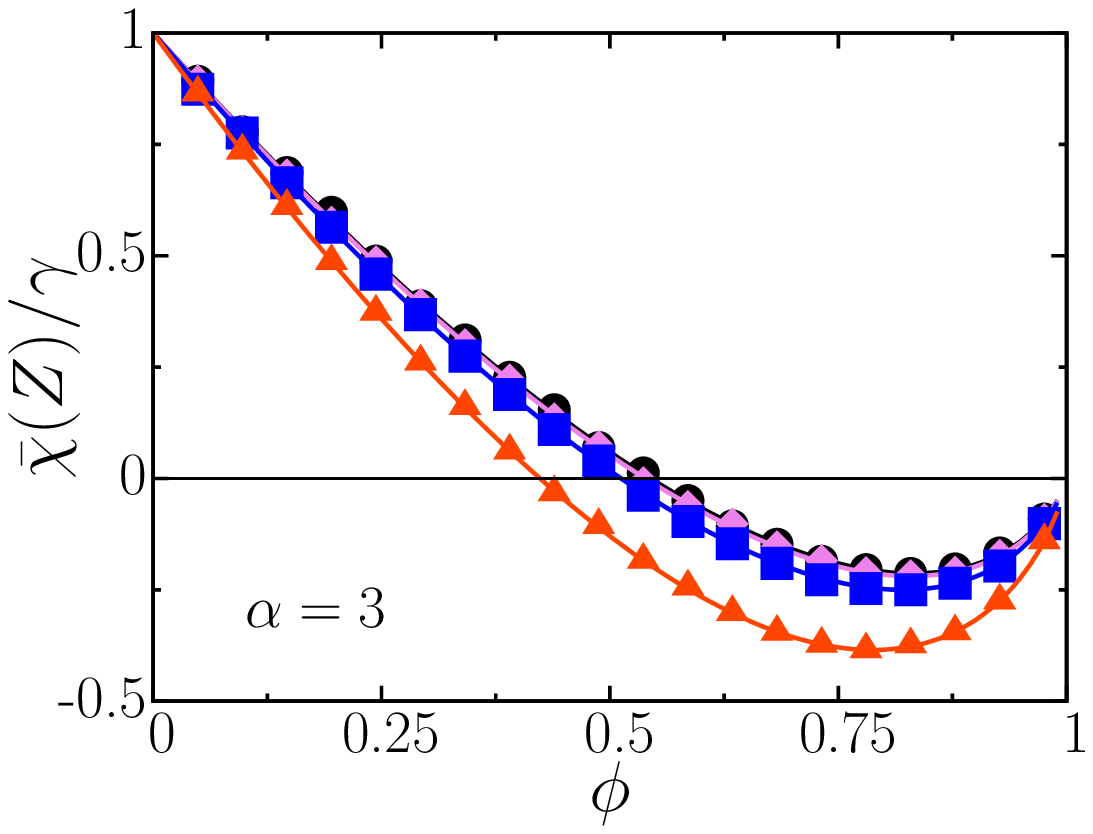}}
  }%
  \hspace{8pt}%
  \subfigure[][]{%
  \ifthenelse{\boolean{blackwhite}}{\includegraphics[width=0.45\textwidth]{bw_ec_rect_alpha_25}}{\includegraphics[width=0.45\textwidth]{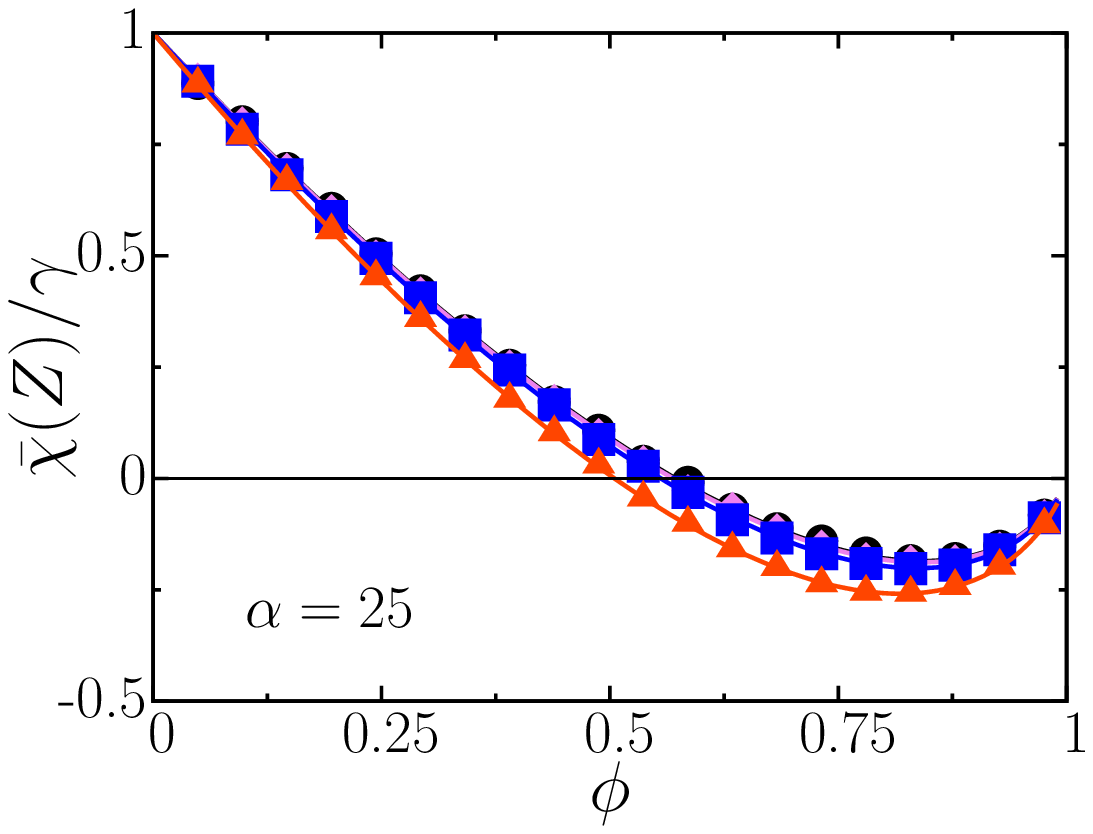}}
  }\\
  \subfigure[][]{%
  \ifthenelse{\boolean{blackwhite}}{\includegraphics[width=0.45\textwidth]{bw_ec_rect_alpha_infty}}{\includegraphics[width=0.45\textwidth]{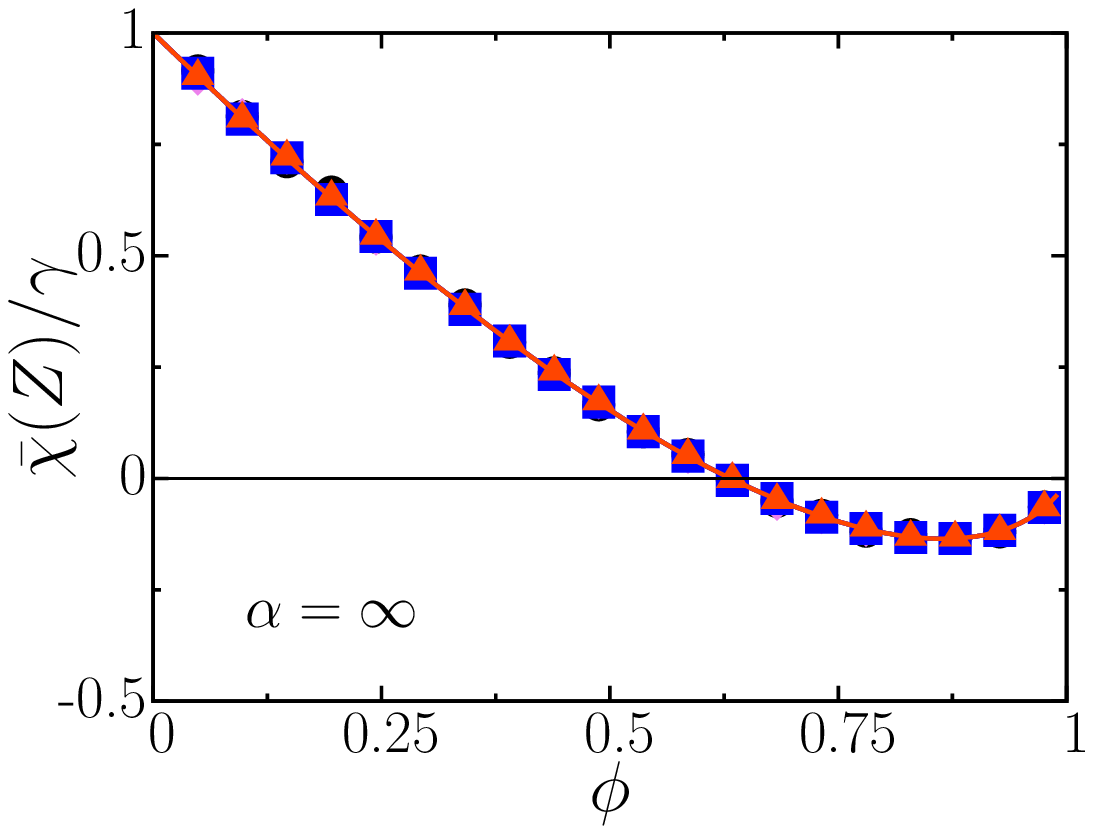}}
  }%
  \caption{Euler characteristic density $\overline{\chi}(Z)$ normalized with
    the intensity $\gamma$ for the Boolean model with rectangles; the
    lines show the analytic function from
    Eq.~\eqref{FunctionEulerChar} -- for details see
    Fig.~\ref{fig:chi_boolean_ellipses}.}
  \label{fig:chi_boolean_rectangles}
\end{figure}

\subsection{\texorpdfstring{Euler characteristic density $\overline{\chi}(Z)$}{Euler characteristic density}}

Figures~\ref{fig:chi_boolean_ellipses} and
\ref{fig:chi_boolean_rectangles} show the Euler characteristic density
$\overline{\chi}(Z)$ of anisotropic Boolean models with ellipses or
rectangles, respectively. The Euler characteristic density
$\overline{\chi}(Z)$, which is normalized with the intensity $\gamma$, is plotted as a
function of the expected occupied area fraction $\phi$. The Euler
characteristic density $\overline{\chi}$ was computed for the same samples
which were used for Figures~\ref{fig:w102_boolean_ellipses} and
\ref{fig:w102_boolean_rectangles}. The error bars are smaller than
point size. The curves depict the analytic function
\begin{equation}\label{FunctionEulerChar}
\phi \mapsto \overline{V_0}(Z)/ \gamma=
(1-\phi)[1+c_0(\alpha,E)\ln(1-\phi)],
\end{equation}
where
\[
c_0(\alpha,E):=\frac{1}{2V_2(E)\gamma^2}\ ^0\overline{V}_{1,1}(X,X),
\]
which is obtained from Corollary \ref{DensityDimTwoThree} and \eqref{FormulaGamma}.
In the case that the base grain is a rectangle the above constant $c_0(\alpha,E)$ can be calculated for given $\alpha$ using \eqref{mixedDensityGeneral} and the representation of the mixed Minkowski tensors for polytopes, see Theorem \ref{translativeMink}, {\rm(iv)}, which leads to the formula
\begin{align*}
  \ ^0V_{1,1}(R,\vartheta(\theta)R) = (a^2 + b^2)|\sin\theta| + 2ab|\cos\theta|, \quad \theta\in [0,2\pi],
\end{align*}
where $R$ is a rectangle with side lengths $a, b >0.$
In the case that the base grain is an ellipse with its boundary parametrized by \eqref{ParamEllipse}, the constant $c_0(\alpha,E)$ can be calculated using \eqref{mixedDensityGeneral}, the formula \eqref{MixedDensSmoothGrain}, that for $u\in S^1$ the curvature of $E$ at a boundary point with outer normal $u$ is
\[
r(E,u) = \frac{p^2 q^2}{(p^2 u_1^2+q^2 u_2^2)^{3/2}}
\]
and numerical integration.
The values obtained from simulations and the analytic values are in excellent
agreement. The Euler characteristic for
aligned grains is independent of the aspect ratio, because in this
case a change in the aspect ratio is simply an elongation of the
system in one direction, which does not change the
topology.
Concluding we emphasis that tensorial functionals, in particular Minkowski
 tensors, are a versatile tool to characterize orientational distributions
 in Boolean models which is important for many applications in materials
 science.





\bibliographystyle{model1b-num-names}


\providecommand{\bysame}{\leavevmode\hbox to3em{\hrulefill}\thinspace}
\providecommand{\MR}{\relax\ifhmode\unskip\space\fi MR }
\providecommand{\MRhref}[2]{%
  \href{http://www.ams.org/mathscinet-getitem?mr=#1}{#2}
}
\providecommand{\href}[2]{#2}
\newpage

\bigskip
\noindent
{\em Authors' addresses:}
\vspace{.5cm}\\
\noindent

Julia H\"orrmann,
Karlsruhe Institute of Technology (KIT), Institute of Stochastics, Kaiserstra{\ss}e 89, D-76133 Karlsruhe, Germany.

E-mail: julia.hoerrmann@kit.edu

\medskip

\noindent
Daniel Hug,
Karlsruhe Institute of Technology (KIT), Institute of Stochastics, Kaiserstra{\ss}e 89, D-76133 Karlsruhe, Germany.

E-mail: daniel.hug@kit.edu

\medskip

\noindent
Michael Klatt,
Theoretische Physik, Friedrich-Alexander-Universit\"at Erlangen-N\"urnberg, Staudtstra{\ss}e 7B, D-91058 Erlangen, Germany.

E-mail: michael.klatt@fau.de

\medskip

\noindent
Klaus Mecke,
Theoretische Physik, Friedrich-Alexander-Universit\"at Erlangen-N\"urnberg, Staudtstra{\ss}e 7B, D-91058 Erlangen, Germany.

E-mail: klaus.mecke@physik.uni-erlangen.de

\end{document}